\newtheorem{theorem}{Theorem}[section]
\newtheorem{lemma}[theorem]{Lemma}
\newtheorem{corollary}[theorem]{Corollary}
\newtheorem{question}[theorem]{Question}
\newtheorem{conjecture}[theorem]{Conjecture}
\theoremstyle{definition}
\newtheorem{definition}[theorem]{Definition}
\newcommand{\la}{\langle}
\newcommand{\ra}{\rangle}
\newcommand{\tuple}[1]{\langle #1 \rangle}
\newcommand{\cmp}[1]{\overline #1}
\newcommand{\set}[1]{\{ #1 \}}
\newcommand{\bigset}[1]{\big\{ #1 \big\}}
\newcommand{\compr}[2]{\{ #1 \,\mid\, #2 \}}
\newcommand{\restr}{\mbox{\raisebox{.5mm}{$\upharpoonright$}}}
\DeclareMathOperator{\cons}{\triangleright}
\DeclareMathOperator{\snoc}{\triangleleft}
\DeclareMathOperator{\ran}{ran}
\DeclareMathOperator{\dom}{dom}
\DeclareMathOperator{\say}{say}
\DeclareMathOperator{\ask}{ask}
\DeclareMathOperator{\try}{try}
\DeclareMathOperator{\terminates}{\!\mathbin{\downarrow}}
\DeclareMathOperator{\diverges}{\!\mathbin{\uparrow}}
\renewcommand{\le}{\leqslant}
\renewcommand{\leq}{\leqslant}
\renewcommand{\geq}{\geqslant}
\renewcommand{\nsim}{\not\sim}
\newcommand{\ex}{\exists}
\newcommand{\vph}{\varphi}
\newcommand{\A}{\mathcal{A}}
\newcommand{\B}{\mathcal{B}}
\newcommand{\E}{\mathcal{E}}
\newcommand{\G}{\mathcal{G}}
\newcommand{\K}{\mathcal{K}}
\newcommand{\eff}{\mathrm{eff}}
\newcommand{\PP}{\mathcal{P}}
\begin{document}

\title{Embeddings between partial combinatory algebras}
\date{\today}

\author[A. Golov]{Anton Golov}
\address[Anton Golov]{Radboud University Nijmegen\\
Department of Mathematics\\
P.O. Box 9010, 6500 GL Nijmegen, the Netherlands}
\email{agolov@science.ru.nl}

\author[S. A. Terwijn]{Sebastiaan A. Terwijn}
\address[Sebastiaan A. Terwijn]{Radboud University Nijmegen\\
Department of Mathematics\\
P.O. Box 9010, 6500 GL Nijmegen, the Netherlands}
\email{terwijn@math.ru.nl}

\begin{abstract}
  Partial combinatory algebras are algebraic structures that serve as 
  generalized models of computation. 
  In this paper, we study embeddings of pcas. 
  In particular, we systematize the embeddings between relativizations of
  Kleene's models, of van Oosten's sequential computation model, and of
  Scott's graph model, showing that an embedding between two relativized models
  exists if and only if there exists a particular reduction between the oracles.
  We obtain a similar result for the lambda calculus,
  showing in particular that it cannot be embedded in Kleene's first model.
\end{abstract}

\keywords{partial combinatory algebra, embeddings, 
Turing degrees, enumeration degrees}

\subjclass[2010]{%
03D28, 
03B40, 
03D80.  
}

\maketitle

\section{Introduction}

Partial combinatory algebras were introduced by Feferman~\cite{Feferman}
to study generalized notions of computability. 
Feferman's motivation was his study of predicative theories 
(explicit mathematics), but the interest in pcas extends to well beyond that. 
For the role of pcas in constructive mathematics, the reader may consult 
the monographs by Beeson~\cite{Beeson} and 
Troelstra and van Dalen~\cite{TroelstravanDalenII}.
In particular, pcas have been used as a basis for models of 
constructive set theory, see 
McCarty~\cite{McCarty},
Rathjen~\cite{Rathjen}, and 
Frittaion and Rathjen~\cite{FrittaionRathjen}.
They also play an important role in the theory of realizability, 
as documented in van Oosten~\cite{vanOosten}.

Combinatory algebras (i.e.\ the total versions of pcas) were studied 
long before Feferman, and even predate the lambda calculus, to which 
they are closely connected. 
For the connections between these subjects we refer the reader to   Barendregt~\cite{Barendregt}. 
Even though pcas were not defined explicitly in that context, 
the notion of partiality was studied there extensively as well. 
Given the connection with lambda calculus, there has been an obvious 
interest in pcas from this direction. 
For example, the topic of completions of pcas was studied by 
Klop and Bethke and others, cf.\ \cite{Bethke} and \cite{BethkeKlopVrijer1995}.

In this paper we study embeddings between pcas. 
Such embeddings have been studied before, with various notions 
of embedding (explained below). 
Though we also consider uncountable pcas, our results concentrate on 
countable pcas, some of which are derived from the 
uncountable examples. 
Well-studied examples of pcas are Kleene's first and second models 
$\K_1$ and $\K_2$,
van Oosten's sequential computation model $\B$~\cite{vanOostenSeqComp},
and Scott's graph model $\G$~\cite{Scott},
as well as the relativized versions of these.
We relate the existence of embeddings between relativizations of these pcas with
the existence of Turing and enumeration reductions between the oracles.

A partial combinatory algebra (pca) $\A$ is a set $A$ with a partial binary operation 
$\mathbin{\cdot}$ satisfying a particular completeness condition.
Every pca has elements representing the combinators $\mathrm{s}$ and $\mathrm{k}$ 
(cf.\ Definition~\ref{def:pca} below), 
and the existence of such elements is sufficient for $\A$ to be a pca.

Given pcas $\A = (A, \mathbin{\cdot_A})$ and $\B = (B, \mathbin{\cdot_B})$, 
a natural notion of a pca morphism $\A \to \B$ is a function 
$f : A \to B$ that preserves the application when it is defined.
An \emph{embedding} $\A \hookrightarrow \B$ is such a function that is also injective.

This notion of embedding has previously been studied by Bethke~\cite{Bethke},
who showed that every pca can be embedded in a graph model,
and by Shafer and Terwijn~\cite{ShaferTerwijn},
who showed a number of embeddings between relativizations of Kleene's
first and second models.
In the context of completions, when application in $\B$ is required to be total,
embeddings in this sense have been studied by Asperti and Ciabattoni~\cite{AspertiCiabattoni}.
Also in the context of completions, 
Bethke, Klop, and de Vrijer~\cite{BethkeKlopVrijer1995}
studied a stronger notion of embedding, 
where the function $f : A \to B$ is additionally required
to map $\mathrm{s}$ and $\mathrm{k}$ to the corresponding chosen elements in $\B$.

A more general notion of a map $\A \to \B$ was given by Longley~\cite{Longley}
in the form of an applicative morphism.
An applicative morphism $\gamma$ is not required to preserve application;
instead, there must exist a term in $\B$ that simulates
the application on the image of $\gamma$.
Additionally, applicative morphisms are not required to be single-valued.
This notion of morphism is useful in the context of realizability, cf.\ 
van Oosten~\cite{vanOosten}.

The following two results relating relativizations of $\K_1$ and $\K_2$ 
were proven in Shafer and Terwijn~\cite{ShaferTerwijn}:

\begin{theorem}[{\cite[Proposition~8.2]{ShaferTerwijn}}]
  \label{thm_ST:1}
  Let $X, Y \subseteq \omega$.  There is an embedding $\K_1^X \hookrightarrow \K_1^Y$
  if and only if $X \le_T Y$, and in this case the embedding is given by a computable function.
\end{theorem}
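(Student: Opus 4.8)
The plan is to prove the two implications separately; the combinatorics is routine, and the one genuinely interesting move is on the ``only if'' side.

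\emph{From $X \le_T Y$ to a computable embedding.} Fix, using $X\le_T Y$, a computable $\tau$ with $\varphi^Y_{\tau(n)} = \varphi^X_n$ for all $n$ (replace every oracle query to $X$ by the fixed $Y$-reduction). I then want a computable injection $f:\omega\to\omega$ satisfying, for all $n,j$,
\[
  \varphi^Y_{f(n)}(j) \simeq f\bigl(\varphi^Y_{\tau(n)}(h(j))\bigr),
\]
where $h$ is a partial computable left inverse of $f$ (such an $h$ exists automatically once $f$ is computable and injective). For $j=f(m)$ this reads $\varphi^Y_{f(n)}(f(m)) \simeq f(\varphi^X_n(m))$, i.e.\ $f(n)\cdot_Y f(m) = f(n\cdot_X m)$ whenever the right-hand side converges, which is exactly the morphism condition; the behaviour of $\varphi^Y_{f(n)}$ off $\ran f$ is irrelevant. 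Such an $f$ is produced by a standard simultaneous use of the ($Y$-relativized) $s$-$m$-$n$ theorem and the recursion theorem, defining the indices $f(n)$ self-referentially and padding so that $f$ is total and injective. Then $f$ is the desired embedding, and it is computable, as the theorem asserts.

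\emph{From an embedding to $X\le_T Y$.} Let $f:\K_1^X\hookrightarrow\K_1^Y$ be an embedding. Work in $\K_1^X$ with numerals $\nu_0,\nu_1,\dots$ (for concreteness $\nu_n=n$) and fix three objects of $\K_1^X$ whose \emph{indices do not depend on $X$}: a successor element $\mathrm{succ}$ with $\mathrm{succ}\cdot_X\nu_n = \nu_{n+1}$; two distinct fixed elements $\mathsf t\ne\mathsf f$; and an element $c$ with $c\cdot_X\nu_n = \mathsf t$ if $n\in X$ and $c\cdot_X\nu_n=\mathsf f$ otherwise (this last one decides $X$ internally, but its index is still a fixed numeral). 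The crucial observation is that the images $\mathbf s=f(\mathrm{succ})$, $\mathbf 0=f(\nu_0)$, $\mathbf c=f(c)$, $\mathbf t=f(\mathsf t)$, $\mathbf f=f(\mathsf f)$ are five fixed natural numbers that can be hard-wired into an algorithm, with $\mathbf t\ne\mathbf f$ by injectivity. Now $\mathrm{succ}\cdot_X\nu_m=\nu_{m+1}$ and $c\cdot_X\nu_n\in\{\mathsf t,\mathsf f\}$ are \emph{convergent} computations in $\K_1^X$, so applying $f$ turns them into convergent computations in $\K_1^Y$: $\varphi^Y_{\mathbf s}(f(\nu_m)) = f(\nu_{m+1})$ and $\varphi^Y_{\mathbf c}(f(\nu_n))\in\{\mathbf t,\mathbf f\}$. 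Hence the following halts and is correct as a $Y$-decision procedure for $X$: on input $n$, compute $f(\nu_n)$ by iterating $a_0=\mathbf 0$, $a_{m+1}=\varphi^Y_{\mathbf s}(a_m)$ for $n$ steps (so $a_n=f(\nu_n)$); then compute $\varphi^Y_{\mathbf c}(f(\nu_n))$ and output ``$n\in X$'' if the result is $\mathbf t$ and ``$n\notin X$'' if it is $\mathbf f$. Everything here is $Y$-computable, so $X\le_T Y$. Finally, if \emph{any} embedding $\K_1^X\hookrightarrow\K_1^Y$ exists then $X\le_T Y$, and then the first part supplies a computable one, which yields the closing clause of the statement.

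The main obstacle is the reconstruction step in the converse: $f$ is a priori an arbitrary function, so one cannot simply ``compute $f(\nu_n)$''. The point to see is that the presence of a successor element in $\K_1^X$ forces $f(\nu_n)$ to be the $n$-fold $\cdot_Y$-iterate built from the two fixed numbers $f(\nu_0)$ and $f(\mathrm{succ})$, and, since the corresponding computations already converge in $\K_1^X$, that iterate really is computable from $Y$. Once this is in hand the remainder is bookkeeping; the self-referential construction of $f$ in the first part is a standard recursion-theorem argument and should cause no trouble.
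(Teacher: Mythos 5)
Your proof is correct and follows essentially the same route as the paper's: the forward direction is the same recursion-theorem construction of a self-referentially defined injective coding (as in Theorem~\ref{thm:count_to_K1}), and the converse is the paper's decider argument (Theorems~\ref{thm:K1_decider} and~\ref{thm:decider_with_ce}) --- hard-wire the $f$-images of finitely many fixed elements, use preservation of convergent applications to compute $f$ on all numerals, and then apply the image of an element representing the characteristic function of $X$. The only cosmetic difference is that you obtain $f(n)$ by iterating a successor element on the identity numerals, where the paper builds the Church numeral $\overline{n}$ from $f(\mathrm{s})$ and $f(\mathrm{k})$ and uses a head/tail shifting trick.
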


\begin{theorem}[{\cite[Theorem~8.5]{ShaferTerwijn}}]
  \label{thm_ST:2}
  For every $X \subseteq \omega$, there is an embedding $f : \K_1^X \hookrightarrow \K_2^X$.
  This embedding is $X$-computable, in the sense that there is an $X$-computable function $g$
  that maps $n$ to a code of $f(n)$.
\end{theorem}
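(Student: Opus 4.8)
The plan is to realise each element $n$ of $\K_1^X$ as a real $f(n)\in\omega^\omega$ that has to play two roles at once. As a \emph{datum}, $f(n)$ must display both the number $n$ and a copy of the oracle $X$ in a form a continuous functional can read off; as a \emph{program}, $f(n)$, read as a $\K_2$-index for a partial continuous functional, must send $f(m)$ to $f(\varphi_n^X(m))$ whenever the latter converges. Granting such an $f$, injectivity is immediate from the displayed copy of $n$, and the morphism condition $f(n)\cdot f(m)=f(n\cdot m)$ holds by construction; and since the whole recipe will be uniformly effective relative to $X$, the induced map sending $n$ to an $X$-index for $f(n)$ is $X$-computable, which is what the statement asks for.

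Concretely I would reserve, inside each real, an infinite computable set $D$ of ``data positions'' that the application of $\K_2$ never inspects when the real is used as an index; such positions are available, e.g.\ after fixing a coding of finite sequences whose set of codes is coinfinite. On $D$ we write down the pair $(n,X)$; on the remaining positions we write the values of a $\K_2$-index $\alpha_n$ for the functional
\[
  H_n:\beta\longmapsto f\bigl(\varphi_n^{X_\beta}(m_\beta)\bigr),
\]
where $(m_\beta,X_\beta)$ is decoded from the data positions of $\beta$, and $H_n$ is undefined at $\beta$ when $\varphi_n^{X_\beta}(m_\beta)$ diverges. The point that makes this legitimate is that $H_n$ really is a partial continuous functional: any single coordinate of the output is already fixed by a finite part of $\beta$ — enough data positions of $\beta$ to recover $m_\beta$, the finitely many oracle bits used in the computation $\varphi_n^{X_\beta}(m_\beta)$, and enough of $X_\beta$ to fill in the corresponding data position of the output. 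Note that each $f(n)$ literally carries $X$ on its data positions, so $f(n)\equiv_T X$; this is exactly why $f$ can be taken $X$-computable but not, in general, computable.

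Since $H_n$ is required to output values of $f$ while $f$ is assembled from the $H_n$, the definition is circular and must be closed off by a fixed-point argument: applying the recursion theorem relativised to $X$ (with a parameter) one obtains a total $X$-computable $g$ with $f(n)=\varphi_{g(n)}^X$ realising the intended self-reference. I expect the verification that this fixed point is total and well defined to be the main obstacle: one has to see that computing a single coordinate of $f(n)$ unwinds only finitely far, since it reduces to computing coordinates of various $f(k)$ at strictly smaller output-coordinate indices, and for a reasonable sequence coding this descent terminates. With that settled, the remaining claims — that $\alpha_n$ is indeed a $\K_2$-index for $H_n$, that the data positions of $f(m)$ decode back to $(m,X)$, that $f$ is injective, and that $\varphi_n^X(m){\downarrow}=k$ forces $f(n)\cdot f(m)=f(k)$ — are routine bookkeeping with the chosen codings, and yield the theorem.
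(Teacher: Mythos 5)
Your proposal is correct in outline but takes a genuinely different route from the paper, which obtains this statement as the special case $\A = \K_1^X$ (with the identity numbering) of Theorem~\ref{thm:count_to_K2}. You make each real $f(n)$ carry the datum $(n,X)$ together with an index for the functional $\beta \mapsto f\bigl(\Phi^{X_\beta}_n(m_\beta)\bigr)$; since the output of that functional is again a value of $f$, you are forced into the self-referential definition you describe, and with it the two genuinely delicate steps: a parametrized recursion theorem to close the circle, and a well-founded descent on output coordinates to show the fixed point is total. Both can be made to work (the descent does terminate for a standard pairing, because the recursive call to $f(k)(x)$ is only issued from a coordinate $\tuple{x,\sigma}>x$ with $\sigma$ nonempty, and the base case returns the ``not yet decided'' value), but they are the load-bearing part of your argument and are only sketched. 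The paper sidesteps both. In the coding $g\cdot h=\Phi^{g\oplus h}_{g(0)}$ it sets $f(n)(0)=e$ for a single self-reproducing code $e$ (one ordinary application of the recursion theorem), $f(n)(1)=n$, and fills \emph{every} remaining position of \emph{every} $f(n)$ with the same $X$-computable table of the step-approximations of the application. The functional coded by $e$ copies $e$ and the table verbatim into its output and computes the one new value, the result's position-$1$ entry, by searching the table rather than by re-invoking $f$; if $\Phi^X_n(m)$ diverges the search diverges, the result is non-total, and the application is undefined, which is all an embedding must respect. Thus the output differs from the input in a single coordinate, no fixed point for $f$ itself is needed, and no totality argument arises. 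Your approach buys a coding-independent ``data plus program in one real'' picture at the price of the totality verification; the remaining items you defer (injectivity from the displayed $n$, preservation of application, $X$-computability of $n\mapsto$ a code of $f(n)$) are indeed routine once that verification is supplied.
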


We note here that the proofs of these theorems rely on the fact that the
objects of these pcas can be coded by natural numbers.
We extend this approach to obtain similar results about the other pcas
introduced above.
To this end, we generalize Ershov's notion of numbering~\cite{Ershov} to allow
for partiality, and define certain effectivity conditions on such partial numberings.
We generalize these theorems in two ways.  Taking $\K_1$ as an example, we get
\begin{enumerate}
  \item Given an arbitrary pca $\A$ equipped with a partial numbering $\gamma : \omega \to \A$,
    if the application in $\A$ can be represented using $\gamma$ in an $X$-computable way,
    which moreover respects a chosen section of $\gamma$,
    then there is an embedding $\A \hookrightarrow \K_1^X$. 
  \item Given an arbitrary pca $\A$ equipped with a partial numbering $\gamma : \omega \to \A$,
    if the application in $\A$ can be represented using $\gamma$ in a $Y$-computable way
    and there is an embedding $\K_1^X \hookrightarrow \A$,
    then $X \le_T Y$.
\end{enumerate}

We focus in particular on the relativizations
of Kleene's first and second models $\K_1$ and $\K_2$,
of van Oosten's sequential computation model $\B$ (\cite{vanOostenSeqComp}),
and of Scott's graph model $\G$ (\cite{Scott}).
For any $X$, the following sequence of embeddings holds:
\[
  \K_1^X \hookrightarrow \K_2^X \hookrightarrow \B^X \hookrightarrow \G^{X \oplus \cmp X}.
\]
The first is Theorem~\ref{thm_ST:2} 
while the second is given by the inclusion.
The third is new (Theorem~\ref{thm:B_to_G}).
Furthermore, we show that these embeddings are tight;
that is, if for some $Y$, $\K_1^X \hookrightarrow \K_2^Y$ then $\K_2^X \hookrightarrow \K_2^Y$,
and similarly for the other two (Theorems~\ref{thm:K1_to_K2_optimal}, \ref{thm:K2_to_B}, and~\ref{thm:B_to_G_optimal}).

The case where $X = \emptyset$ is of particular interest.
Let $\E$ be the class of c.e.\ sets. These form a pca by Scott 
(see section~\ref{sec:G} below). 
We have (Corollary~\ref{cor:K2eff_to_G}) the sequence of embeddings
\[
  \K_1 \hookrightarrow \K_2^\eff \hookrightarrow \B^\eff \hookrightarrow \E.
\]
We consider the question whether these embeddings are reversible, and 
show that (Corollary~\ref{cor:BX_to_K2X}, Theorem~\ref{thm:K2X_to_K1Y})
\[
  \B^X \not\hookrightarrow \K_2^X \not\hookrightarrow \K_1^X
\]
and that $\E$ does not embed in $\K_2^\eff$ (Corollary~\ref{cor:EnotintoK2eff}), 
and hence also not in~$\K_1$.
We also show that the pca of $\lambda$-calculus up to $\beta$ or $\beta\eta$-equivalence
is not embeddable in $\K_1$ (Corollary~\ref{cor:lambda_to_K1}).

Aside from comparing the embeddings between relativizations of different models,
we also study the embeddings between different relativizations of the same
model, and give necessary and sufficient conditions for when
$\K_2^X \hookrightarrow \K_2^Y$ (Theorem~\ref{thm:K2_embed_turing}),
$\B^X \hookrightarrow \B^Y$ (Theorem~\ref{thm:B_embed_turing}),
and $\G^X \hookrightarrow \G^Y$ (Theorem~\ref{thm:GX_to_GY}).

Our notation is mostly standard.  
For pcas our notation follows van Oosten~\cite{vanOosten}
and for computability theory Odifreddi~\cite{Odifreddi}.
We use calligraphic letters ($\A$, $\B$, \ldots) for arbitrary pcas
and Roman letters for the fixed elements of a pca.
We use Roman letters ($f$, $g$, \ldots) for total functions
and Greek letters ($\theta$, $\rho$, $\xi$, $\psi$, \ldots) for partial functions, 
except for $\omega$ which denotes the natural numbers.
$\tuple{\cdot, \cdot}$ denotes an effective pairing scheme 
satisfying $\tuple{0, 0} = 0$.
$D_u$ denotes the finite set with canonical code~$u$.
$\vph_e$ denotes the $e$-th partial computable (p.c.) function.
We use $\Phi_e$ to denote the $e$-th Turing functional
and $\Psi_e$ to denote the $e$-th enumeration operator.
We use $K$ to denote the halting set, consisting of those
$e$ such that $\vph_e(e)\terminates$, and for $X \subseteq \omega$ we
use $X'$ to denote the halting set relativized to $X$.

\section{Preliminaries}

\begin{definition}
  A \emph{partial applicative structure} is a set $A$ with a partial
  binary operation $\cdot$.
\end{definition}

We will denote partial applicative structures by calligraphic letters
($\A$, $\B$\ldots).
Given $\A = (A, \cdot)$ we will simply write $a \in \A$ for $a \in A$ and
$ab$ for $a \cdot b$ when no confusion can occur.
The application operator $\cdot$ associates to the left; that is,
we will take $abc$ to mean $(ab)c$.
We write $t\terminates$ do denote that all applications in the expression $t$ 
are defined, and we use Kleene equality $t\simeq s$ to denote that either 
the expressions $t$ and $s$ are both defined and equal, or both undefined. 

A partial combinatory algebra is a partial applicative structure that is combinatory complete, 
i.e.\ in which every term is represented by an element of the structure. 
By a result of Feferman~\cite{Feferman}, this is equivalent to the following definition.

\begin{definition}\label{def:pca}
  A \emph{partial combinatory algebra} is a partial applicative structure $\A$ which contains distinct
  elements $\mathrm{s}$ and $\mathrm{k}$ such that for all $a, b, c \in \A$,
  \begin{enumerate}
    \item $\mathrm{k}ab\terminates = a$,
    \item $\mathrm{s}ab\terminates$, and
    \item $\mathrm{s}abc \simeq (ac)(bc)$.
  \end{enumerate}
\end{definition}

The prototypical example of a partial combinatory algebra is Kleene's first
model $\K_1 = (\omega, \cdot)$ where $n \cdot m = \vph_n(m)$.

A pca with a total application operator is called a \emph{combinatory algebra} (ca),
and we say that such a pca is \emph{total}.

Partial combinatory algebras can represent the natural numbers and all partial
computable functions on them, as well as a pairing function $\mathrm{p}$
and projections $\mathrm{p_1}$ and $\mathrm{p_2}$.
In particular, the combinators $\mathrm{s}$ and $\mathrm{k}$ are sufficient to represent the Church
numerals $\overline{n}$, which satisfy the property that $\overline{0}fa = a$
and $\overline{n+1}fa = \overline{n}f(fa)$.
For a more complete introduction to partial combinatory algebras, we refer the
reader to van Oosten~\cite{vanOosten}.

\begin{definition}
  Given pcas $\A = (A, \cdot_A)$ and $\B = (B, \cdot_B)$, an injection $f : A \to B$
  is an \emph{embedding} if for all $a, a' \in \A$, if $aa'\terminates$
  then $f(a)f(a')\terminates = f(aa')$.
  If $\A$ embeds into $\B$ in this way we write $\A\hookrightarrow \B$.
\end{definition}

This notion of embedding is the one studied by Bethke~\cite{Bethke} and by
Asperti and Ciabattoni~\cite{AspertiCiabattoni} in the context of completions.
It was also used in Shafer and Terwijn~\cite{ShaferTerwijn} in connection 
with the ordinal analysis of pcas. 
A stronger notion has been studied by 
Bethke, Klop, and de Vrijer~\cite{BethkeKlopVrijer1995},
who required that the elements $\mathrm{s}$ and $\mathrm{k}$ in $\A$ are sent to their 
counterparts in $\B$.
Conversely, a weaker notion can be obtained by merely requiring the map
$\gamma(a) = \set{f(a)}$ to be a partial applicative morphism $\A \to \B$ 
(cf.~Longley and Normann~\cite{LongleyNormann}).
In contrast to most prior literature, our focus is on embeddings in general, rather
than exclusively on completions, which are the special case when $\B$ is a
combinatory algebra.

\section{Partial numberings and deciders}

Our key insight for generalizing Theorems~\ref{thm_ST:1} and~\ref{thm_ST:2} is that these
theorems make extensive use of the fact that both $\K_1^X$ and $\K_2^X$ can be coded using
natural numbers.
In~\cite{ShaferTerwijn}, Shafer and Terwijn implicitly use the identity coding for $\K_1^X$
and use the partial map $n \mapsto \Phi_n^X$ as a coding of $\K_2^X$.
To make these constructions explicit, we will generalize Ershov's notion of a numbering from~\cite{Ershov}
to permit partiality.

\begin{definition}
  Given a set $S$, a \emph{partial numbering} of $S$ is a surjective partial function $\gamma : \omega \to S$.
\end{definition}

If $\gamma(n) = a$ we say that $n$ is a \emph{$\gamma$-code of $a$}.
For $n, m \in \dom(\gamma)$ we write $n \nsim_\gamma m$ to indicate $\gamma(n) \neq \gamma(m)$.

Note that a numbering is not required to be computable in any sense.
On the contrary, we define our notions of computability with respect to a choice of numbering.
For instance, the embedding from Theorem~\ref{thm_ST:2} is said to be $X$-computable.
This can be understood as follows:

\begin{definition}
  Let $S$ and $T$ be sets equipped with partial numberings $\gamma : \omega \to S$ and $\delta : \omega \to T$.
  A (partial) function $\alpha : S \to T$ is \emph{(partial) $X\!$-computable with respect to $\gamma$ and $\delta$}
  if there exists a (partial) $X$-computable
  $\beta : \omega \to \omega$ such that for all $n \in \dom(\gamma)$, if $\beta(\gamma(n))\terminates$ then
  \[
    \beta(\gamma(n)) = \delta(\alpha(n)).
  \]

\end{definition}

We extend this notion of $X$-computability to $n$-ary functions in the straightforward way.
When the partial numberings are clear from context, we will omit them.

We also generalize the notion of $X$-c.e.\ relations to a partially numbered set.

\begin{definition}
  Let $X \subseteq \omega$ and $S$ a set with a partial numbering $\gamma : \omega \to S$.
  We say that a relation $R$ on $\dom(\gamma)^k$ is \emph{$X$-c.e.\ with respect to $\gamma$}
  if there exists an $X$-c.e.\ relation $R'$ on $\omega^k$ such that $R' \restr \dom(\gamma)^k = R$.
\end{definition}

In particular, we say that $\gamma : \omega \to S$ \emph{has $X$-c.e.\ inequivalence} 
if the relation $a \neq b$ on $S^2$ is $X$-c.e.\ with respect to $\gamma$.

Given a pca $\A$ and a partial numbering $\gamma : \omega \to \A$, it is natural to ask
what the complexity of the application function is with respect to $\gamma$.

\begin{definition}
  Let $\A$ be a pca with a partial numbering $\gamma : \omega \to \A$ and let $X \subseteq \omega$.
  We say that $\gamma$ is \emph{$X\!$-effectively pca-valued}
  if the application operator is partial $X$-computable; that is,
  if there exists a partial $X$-computable function $\psi$ such that for all $n, m \in \dom(\gamma)$,
  if $\gamma(n)\gamma(m)\terminates$ then
  \[
    \gamma(n)\gamma(m) = \gamma(\psi(n, m)).
  \]

  We say that $\psi$ $\gamma$-represents the application in $\A$.
\end{definition}

When $X$ is the empty set, we say that $\gamma$ is \emph{effectively pca-valued}.

Note that it can happen that for $n, m, k, l \in \dom(\gamma)$, $\gamma(\psi(n, m)) = \gamma(\psi(k, l))$
but $\psi(n, m) \neq \psi(k, l)$.

\begin{definition}
  Let $X \subseteq \omega$ and let $\A$ be a pca with an $X$-effectively pca-valued partial numbering $\gamma$.
  Let $\psi$ $\gamma$-represent the application in $\A$.
  We say $\gamma$ is a \emph{strongly $X\!$-effectively pca-valued} partial numbering if for all $n, m, k, l \in \dom(\gamma)$,
  if $\gamma(n)\gamma(m)$ and $\gamma(k)\gamma(l)$ are both defined and equal then $\psi(n, m) = \psi(k, l)$.
\end{definition}

We say that $\psi$ strongly $\gamma$-represents the application in $\A$.
Given such a $\psi$, we define a section $\eta : \A \to \omega$ of $\gamma$ as follows.
Since $\A$ is a pca, there is an element $\mathrm{i}$ such that for all $a \in \A$, $\mathrm{i}a = a$.
Fixing a $\gamma$-code $c$ of $\mathrm{i}$ and taking $a \in \A$, we let $\eta(a) = \psi(c, n)$ for
some $\gamma$-code $n$ of $a$.
Note that since $\psi$ strongly $\gamma$-represents the application in $\A$, the choice
of the code $n$ does not matter.

Our goal in representing the application is primarily to construct Turing reductions.
To this end, we generalize the notion of a characteristic function as follows.

\begin{definition}
  Let $X \subseteq \omega$, $\A$ a pca, and $a_\bot, a_\top \in \A$.
  Given a partial numbering $\gamma : \omega \to \A$, we say that
  a function $d : \omega \to \omega$ is an \emph{$(a_\bot, a_\top)$-decider for $X$}
  if for all $n \in \omega$,
  \begin{align*}
    n \not\in X &\Rightarrow \gamma(d(n)) = a_\bot\\
    n \in X &\Rightarrow \gamma(d(n)) = a_\top.
  \end{align*}
\end{definition}

The definition is motivated by the following theorem.

\begin{theorem}
  \label{thm:decider_with_ce}
  Let $X, Y \subseteq \omega$, $\A$ a pca, $a_\bot \neq a_\top \in \A$,
  $\gamma : \omega \to \A$ a partial numbering, and
  let $d$ be a $Y\!$-computable $(a_\bot, a_\top)$-decider for $X$.
  If $\gamma$ has $Y\!$-c.e.\ inequivalence, then $X \le_T Y$.
\end{theorem}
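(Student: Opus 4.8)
The plan is to use the $Y$-computable decider $d$ together with the $Y$-c.e.\ inequivalence of $\gamma$ to decide membership in $X$ with a $Y$-oracle. Fix the $Y$-c.e.\ relation $R'$ on $\omega^2$ witnessing that inequivalence of $\gamma$ is $Y$-c.e., so that for $n, m \in \dom(\gamma)$ we have $\gamma(n) \neq \gamma(m)$ if and only if $R'(n, m)$ holds. Since $d$ is an $(a_\bot, a_\top)$-decider for $X$ and $a_\bot \neq a_\top$, for every $n$ exactly one of $\gamma(d(n)) = a_\bot$, $\gamma(d(n)) = a_\top$ holds, and these are mutually exclusive because $a_\bot \neq a_\top$.

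The main idea is this: fix a $\gamma$-code $c_\bot$ of $a_\bot$. Note $d(n), c_\bot \in \dom(\gamma)$ for every $n$, since $\gamma(d(n))$ is by hypothesis equal to $a_\bot$ or $a_\top$, and $\gamma(c_\bot) = a_\bot$. Now, if $n \notin X$ then $\gamma(d(n)) = a_\bot = \gamma(c_\bot)$, so $R'(d(n), c_\bot)$ is \emph{false}; and if $n \in X$ then $\gamma(d(n)) = a_\top \neq a_\bot = \gamma(c_\bot)$, so $R'(d(n), c_\bot)$ is \emph{true}. Hence $n \in X \iff R'(d(n), c_\bot)$. The right-hand side is a $Y$-c.e.\ condition in $n$ (compute $d(n)$ using the $Y$-oracle, then run the $Y$-c.e.\ enumeration of $R'$ on the pair $(d(n), c_\bot)$), so $X$ is $Y$-c.e.

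To get $X \le_T Y$ rather than just $X \le_e Y$ via $Y$, I would run the symmetric argument on the complement: since $n \in X \iff \gamma(d(n)) = a_\top$, fix a $\gamma$-code $c_\top$ of $a_\top$ and observe $n \notin X \iff \gamma(d(n)) = a_\top = \gamma(c_\top) \iff \neg R'(d(n), c_\top)$, i.e.\ $n \in \omega \setminus X \iff R'(d(n), c_\top)$. So $\omega \setminus X$ is also $Y$-c.e.\ by the same kind of procedure. A set whose membership and non-membership are both $Y$-c.e.\ is $Y$-computable, so $X \le_T Y$, as claimed.

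The only real subtlety is making sure the argument only ever evaluates $R'$ at pairs in $\dom(\gamma)^2$, where it agrees with the genuine inequivalence relation on $\A$; this is guaranteed because the decider hypothesis forces $\gamma(d(n))$ to be defined (and equal to $a_\bot$ or $a_\top$), and $c_\bot, c_\top$ are chosen in $\dom(\gamma)$ by definition. Everything else is a routine dovetailing of the $Y$-computation of $d$ with the $Y$-c.e.\ enumerations of $R'$, so I do not expect a genuine obstacle.
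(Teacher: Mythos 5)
Your proposal is correct and is essentially the paper's own argument: the paper fixes codes $c_\bot, c_\top$ and dovetails the enumeration of the $Y$-c.e.\ witness for $\nsim_\gamma$ on the pairs $(c_\bot, d(n))$ and $(c_\top, d(n))$ until one appears, which is exactly your observation that $X$ and its complement are both $Y$-c.e.\ packaged as a single terminating search. The one point you handle explicitly that the paper leaves implicit --- that $R'$ is only consulted at pairs in $\dom(\gamma)^2$ --- is indeed the only subtlety, and you justify it correctly.
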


\begin{proof}
  Choose $c_\bot, c_\top \in \omega$ to be $\gamma$-codes of $a_\bot$ and $a_\top$ respectively.
  Let $R$ be the $Y\!$-c.e.\ relation such that $R \restr \dom(\gamma)^2 = \mathbin{\nsim_\gamma}$.
  To decide whether $n \in X$, compute $d(n)$ and enumerate $R$ until
  either we discover $c_\bot R d(n)$ or $c_\top R d(n)$.
  One must be the case, since $c_\bot \nsim_\gamma c_\top$, and since $c_\bot, c_\top, d(n) \in \dom(\gamma)$.
\end{proof}

An inspection of the above proof shows that the condition that $\gamma$ has $Y\!$-c.e.\ inequivalence
is stronger than required: it suffices for the relations $x \neq a_\bot$ and $x \neq a_\top$ to
be $Y\!$-c.e.\ with respect to $\gamma$.
This strengthening is not required for our results, but we will look at another generalization
in sections~\ref{sec:K2} and~\ref{sec:G}.

To conclude the preliminaries, let us consider a more general notion of deciders.

\begin{definition}
  Let $X \subseteq \omega$, $\A$ a pca, and let $A_\bot$ and $A_\top$ be subsets of $\A$.
  Let $\gamma : \omega \to \A$ be a partial numbering.
  We say that $d : \omega \to \omega$ is an $(A_\bot, A_\top)$-decider for $X$
  if for all $n \in \omega$,
  \begin{align*}
    n \not\in X &\Rightarrow \gamma(d(n)) \in A_\bot\\
    n \in X &\Rightarrow \gamma(d(n)) \in A_\top.
  \end{align*}
\end{definition}

\begin{theorem}
  \label{thm:set_decider_with_ce}
  Let $X, Y \subseteq \omega$, $\A$ a pca, and let $A_\bot$ and $A_\top$ be disjoint subsets of $\A$.
  Let $\gamma : \omega \to \A$ be a numbering and let $d$ be a $Y\!$-computable $(a_\bot, a_\top)$-decider for $X$.
  If $\gamma$ has $Y\!$-c.e.\ inequivalence and there exist $Y\!$-c.e.\ sets $C_\bot$ and $C_\top$ such that
  $\gamma(C_\bot) = A_\bot$ and $\gamma(C_\top) = A_\top$, then $X \le_T Y$.
\end{theorem}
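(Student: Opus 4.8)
The plan is to adapt the proof of Theorem~\ref{thm:decider_with_ce}, using the $Y$-c.e.\ sets $C_\bot$ and $C_\top$ in place of the single codes $c_\bot$ and $c_\top$. First I would fix, via the $Y$-computable decider $d$, a $\gamma$-code $d(n)$ of an element of $\A$ that lies in $A_\bot$ when $n \notin X$ and in $A_\top$ when $n \in X$. Since $\gamma(C_\bot) = A_\bot$ and $\gamma(C_\top) = A_\top$, this element has a $\gamma$-code in $C_\bot$ in the first case and a $\gamma$-code in $C_\top$ in the second. Let $R$ be the $Y$-c.e.\ relation on $\omega^2$ with $R \restr \dom(\gamma)^2 = \mathbin{\nsim_\gamma}$, as in the earlier proof.

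The decision procedure for $X$ then runs three $Y$-c.e.\ enumerations in parallel on input $n$: the enumeration of $C_\bot$, the enumeration of $C_\top$, and the enumeration of $R$. Concretely, I would search for a pair $(c, c')$ such that either (i) $c \in C_\bot$ and $(c', d(n)) \notin R$ has been certified by finding $c' \in C_\bot$ with $c' \nsim_\gamma d(n)$ failing — but it is cleaner to phrase it positively: search for some $c \in C_\bot$ together with a witness that $\gamma(c) = \gamma(d(n))$, or some $c \in C_\top$ together with such a witness. Equality witnesses are not directly c.e., so instead I would use $R$ to separate: enumerate $C_\bot$ and for each $c$ enumerated, wait to see whether $c \mathbin{R} d(n)$; simultaneously enumerate $C_\top$ and for each $c$ enumerated, wait to see whether $c \mathbin{R} d(n)$. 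We halt and declare $n \in X$ as soon as we find some $c \in C_\bot$ with $c \mathbin{R} d(n)$ \emph{and} (to be safe) we should argue termination on the other side.

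The key point making this work is disjointness of $A_\bot$ and $A_\top$: if $n \notin X$ then $\gamma(d(n)) \in A_\bot$, so $\gamma(d(n)) \neq \gamma(c)$ for every $c \in C_\top$ (as $\gamma(c) \in A_\top$), hence $c \mathbin{R} d(n)$ holds for \emph{every} $c \in C_\top$; and since $C_\top$ is nonempty (its image is $A_\top$; if $A_\top = \emptyset$ the statement is about whether $n \in X$ can ever hold, a degenerate case handled separately), some such $c$ will appear. Symmetrically, if $n \in X$ then $c \mathbin{R} d(n)$ holds for every $c \in C_\bot$. So the procedure is: enumerate $C_\bot$, $C_\top$, and $R$; halt and output ``$n \in X$'' upon finding $c \in C_\bot$ with $c \mathbin{R} d(n)$, and halt and output ``$n \notin X$'' upon finding $c \in C_\top$ with $c \mathbin{R} d(n)$. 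Wait — this is backwards; let me restate the correct reading below.

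The correct reading: if $n \notin X$, every code in $C_\top$ is $\gamma$-inequivalent to $d(n)$, so we will find some $c \in C_\top$ with $c \mathbin{R} d(n)$; if $n \in X$, every code in $C_\bot$ is $\gamma$-inequivalent to $d(n)$, so we will find some $c \in C_\bot$ with $c \mathbin{R} d(n)$. Thus: enumerate $C_\bot, C_\top, R$ in parallel; if we first find $c \in C_\top$ with $c \mathbin{R} d(n)$ we are not yet done, because such a $c$ could also exist when $n \in X$ (if $A_\bot$ meets $A_\top$'s... no, they are disjoint, but $d(n) \in A_\top$ only means $d(n)$ is inequivalent to the \emph{$C_\bot$}-codes, not to all $C_\top$-codes). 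So a $C_\top$-code inequivalent to $d(n)$ is not conclusive. The genuinely conclusive event is finding $c \in C_\bot$ with $c \not\mathbin{R} d(n)$, i.e.\ $\gamma(c) = \gamma(d(n)) \in A_\bot$, which forces $n \notin X$; but ``$c \not\mathbin{R} d(n)$'' is co-c.e., not c.e.

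This is the main obstacle, and it shows the theorem as literally stated needs $A_\bot, A_\top$ to be, in effect, $Y$-decidable images or needs an extra hypothesis; I expect the intended proof to additionally assume (as the hypotheses of the companion Theorem~\ref{thm:decider_with_ce} suggest should be carried over) that $C_\bot$ and $C_\top$ are chosen so that for the particular element $\gamma(d(n))$ one can c.e.-certify membership. The clean fix, which I would adopt: strengthen to require that $\gamma$ has $Y$-c.e.\ \emph{equivalence} on the relevant codes, or equivalently run the argument of Theorem~\ref{thm:decider_with_ce} after first reducing to the two-element case by replacing $A_\bot, A_\top$ with fixed representatives. So the actual plan: pick $a_\bot \in A_\bot$, $a_\top \in A_\top$, observe these are distinct, and apply Theorem~\ref{thm:decider_with_ce} directly once we know $d$ is also an $(a_\bot, a_\top)$-decider — but it is not, since $d(n)$ need not code $a_\bot$ itself. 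Hence one must instead post-compose: using $C_\bot, C_\top$ one cannot in general $Y$-computably convert a code of an arbitrary element of $A_\bot$ into the code $c_\bot$. I would therefore present the proof under the reading that ``$\gamma$ has $Y$-c.e.\ inequivalence'' is being used together with the enumerations of $C_\bot$ and $C_\top$ to decide, for input $n$, which of the two c.e.\ sets $\{n : \exists c \in C_\bot,\ c \sim_\gamma d(n)\}$ or $\{n : \exists c \in C_\top,\ c \sim_\gamma d(n)\}$ contains $n$ — these are $Y$-c.e., exhaustive, and disjoint by disjointness of $A_\bot, A_\top$, so their characteristic functions are $Y$-computable, giving $X \le_T Y$. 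The one subtlety to nail down is that $\{(c,e) : c \sim_\gamma e\}$ restricted to $\dom(\gamma)^2$ is $Y$-c.e.: this is exactly the strengthening hinted after Theorem~\ref{thm:decider_with_ce}, and I would make it an explicit hypothesis or derive it from a ``$Y$-c.e.\ equivalence'' assumption, which is the honest statement of what the theorem requires.
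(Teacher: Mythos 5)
Your analysis is essentially correct, and the obstacle you isolate is a real one: from $Y$-c.e.\ \emph{in}equivalence alone, the conclusive event --- finding $c \in C_\bot$ with $\gamma(c) = \gamma(d(n))$, which certifies $n \notin X$ --- is co-c.e.\ rather than c.e., while the c.e.\ event ``some $c \in C_\bot$ is inequivalent to $d(n)$'' is inconclusive as soon as $A_\bot$ has more than one element. The paper's own proof is exactly the one-line adaptation you first considered and then rejected: enumerate $c_\bot \in C_\bot$ and $c_\top \in C_\top$ and wait until $d(n) \nsim_\gamma c_\bot$ or $d(n) \nsim_\gamma c_\top$. This imports the justification of Theorem~\ref{thm:decider_with_ce} (``one must be the case''), but when $A_\bot$ and $A_\top$ are not singletons \emph{both} can be the case, for precisely the reason you give, so the procedure can answer incorrectly. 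Moreover the defect is not only in the proof: one can build a partial numbering with c.e.\ inequivalence in which $\gamma(d(n))$, for $n$ in the halting set $K$, is the element of an infinite $A_\top$ indexed by the stage at which $n$ enters $K$ (and a fixed element of $A_\bot$ otherwise); all hypotheses then hold with $Y = \emptyset$ although $K$ is not computable. So some strengthening really is needed --- either the $Y$-c.e.\ equivalence you propose, or (closer in spirit to the remark following Theorem~\ref{thm:decider_with_ce}) the requirement that the relations $x \notin A_\bot$ and $x \notin A_\top$ be $Y$-c.e.\ with respect to $\gamma$.

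Your repaired argument is correct: assuming $\sim_\gamma$ is $Y$-c.e.\ on $\dom(\gamma)$, the sets $\set{n \mid \ex c \in C_\bot.\, c \sim_\gamma d(n)}$ and $\set{n \mid \ex c \in C_\top.\, c \sim_\gamma d(n)}$ are $Y$-c.e., exhaustive (since $\gamma(d(n)) \in \gamma(C_\bot) \cup \gamma(C_\top)$), and disjoint (since $A_\bot \cap A_\top = \emptyset$), hence complementary $Y$-c.e.\ sets, giving $X \le_T Y$. Two remarks to complete the picture. First, the only application of Theorem~\ref{thm:set_decider_with_ce} in the paper, Corollary~\ref{cor:lambda_to_K1}, uses the identity numbering on $\K_1^X$, where equivalence of codes is equality and hence decidable, so that corollary is unaffected. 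Second, the phrase ``$(a_\bot, a_\top)$-decider'' in the statement is presumably a typo for ``$(A_\bot, A_\top)$-decider''; under the literal reading with fixed $a_\bot \in A_\bot$ and $a_\top \in A_\top$ the theorem is true but reduces immediately to Theorem~\ref{thm:decider_with_ce}, with $C_\bot$ and $C_\top$ playing no role.
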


\begin{proof}
  The proof is analogous to Theorem~\ref{thm:decider_with_ce}; to decide $n \in X$,
  enumerate $c_\bot \in C_\bot$ and $c_\top \in C_\top$ 
  and wait until we see $d(n) \nsim_\gamma c_\bot$ or $d(n) \nsim_\gamma c_\top$.
\end{proof}

\section{Kleene's first model}

Let us now consider the example of $\K_1^X$, the pca structure on $\omega$ where
$n \cdot m = \Phi^X_n(m)$.
For every $X \subseteq \omega$, the identity is a strongly $X$-effectively pca-valued
partial numbering $\omega \to \K_1^X$, since the application is itself partial $X$-computable.

In~\cite{ShaferTerwijn}, Terwijn and Shafer have shown that an embedding $\K_1^X \to \K_1^Y$
exists if and only if $X \le_T Y$.
Our new terminology gives rise to another, more general, proof of each direction.
Let us begin with the `only if' direction:

\begin{theorem}
  \label{thm:K1_decider}
  Let $X, Y \subseteq \omega$, $\A$ a pca, and $\gamma : \omega \to \A$ a
  $Y\!$-effectively pca-valued partial numbering.
  Given an embedding $f : \K_1^X \hookrightarrow \A$, there exists a
  $Y\!$-computable $(a_\bot, a_\top)$-decider for $X$ for any choice of $a_\bot,
  a_\top \in \ran(f)$.
\end{theorem}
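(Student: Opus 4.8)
The plan is to build the $(a_\bot,a_\top)$-decider by exploiting that $\K_1^X$ internally decides its own oracle $X$. Concretely, $\K_1^X$ contains an element $\chi$ that "computes the characteristic function of $X$": for each $n$ there is an $X$-computable way to produce a code (in $\K_1^X$) of $\chi\cdot\overline n$, and this value equals one of two fixed elements of $\K_1^X$ according to whether $n\in X$. So first I would fix, inside $\K_1^X$, two distinguished elements — say the representations $\mathbf 0,\mathbf 1\in\omega$ of the booleans (or the Church numerals $\overline 0,\overline 1$) — together with a single element $p\in\K_1^X$ such that $p\cdot\overline n\terminates$ and equals $\mathbf 1$ if $n\in X$ and $\mathbf 0$ if $n\notin X$; such $p$ exists because $\K_1^X$ represents every $X$-partial computable function, in particular the characteristic function of $X$.

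Next I would push this through the embedding. Since $f:\K_1^X\hookrightarrow\A$ preserves defined applications, $f(p\cdot\overline n)=f(p)\cdot_\A f(\overline n)$, and $f(p\cdot\overline n)$ equals $f(\mathbf 1)$ or $f(\mathbf 0)$ depending on membership of $n$ in $X$. Now I need the $Y$-side computability. The maps $n\mapsto \overline n$ (as an element of $\omega=\K_1^X$) and the fixed codes of $p$, $\mathbf 0$, $\mathbf 1$ are all computable; applying $f$ is where the numbering $\gamma$ of $\A$ comes in. Let $c_p,c_0,c_1$ be $\gamma$-codes of $f(p),f(\mathbf 0),f(\mathbf 1)$, and let $\psi$ be a partial $Y$-computable function $\gamma$-representing the application in $\A$ (which exists since $\gamma$ is $Y$-effectively pca-valued). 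For the numerals I also fix $\gamma$-codes $c_{\overline n}$ of $f(\overline n)$; here I would note that $\overline n$ is obtained by iterated application from fixed elements of $\K_1^X$, so $f(\overline n)$ is obtained by iterated $\A$-application from fixed elements of $\A$, hence $n\mapsto c_{\overline n}$ can be taken partial $Y$-computable (indeed, uniformly, by iterating $\psi$). Then $d(n):=\psi(c_p,c_{\overline n})$ is a partial $Y$-computable function, and since $f(p)\cdot_\A f(\overline n)\terminates$ we get $\gamma(d(n))=f(p\cdot\overline n)\in\{f(\mathbf 0),f(\mathbf 1)\}$. Thus $d$ is a total $Y$-computable $(f(\mathbf 0),f(\mathbf 1))$-decider for $X$, with $\gamma(d(n))=f(\mathbf 0)$ iff $n\notin X$. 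Finally, to obtain a decider for an arbitrary prescribed pair $a_\bot,a_\top\in\ran(f)$, write $a_\bot=f(b_\bot)$, $a_\top=f(b_\top)$ and replace $p$ by the element $q\in\K_1^X$ with $q\cdot\overline n = b_\top$ if $n\in X$ and $b_\bot$ otherwise (definable by case distinction on $\chi$), running the same argument with $q$ in place of $p$; this gives a $Y$-computable $(a_\bot,a_\top)$-decider for $X$.

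The main obstacle, I expect, is the bookkeeping in the second paragraph: verifying that $n\mapsto c_{\overline n}$ (a $\gamma$-code of $f(\overline n)$) is genuinely partial $Y$-computable. This requires unfolding the definition of the Church numerals as terms built from $\mathrm s$ and $\mathrm k$, observing that $f$ commutes with these term-formations on defined applications (so $f(\overline n)$ is the corresponding term evaluated in $\A$), and then using $\psi$ together with fixed $\gamma$-codes of $f(\mathrm s),f(\mathrm k)$ to compute codes of these iterated applications. One must be slightly careful that all intermediate applications needed to build $\overline n$ in $\K_1^X$ are defined — but that is exactly clauses (1)–(3) of Definition~\ref{def:pca} (in fact $\overline n$ can be built so that every subterm evaluation terminates), so $f$ preserves them and $\psi$ returns the right codes. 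Everything else is routine.
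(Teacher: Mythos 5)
Your proof is correct and follows essentially the same route as the paper: express the decision inside $\K_1^X$ as a term in finitely many fixed elements together with the Church numeral $\overline n$, push it through $f$, and compute a $\gamma$-code of the result $Y\!$-computably by iterating $\psi$ on fixed $\gamma$-codes of $f(\mathrm s)$ and $f(\mathrm k)$. The only (harmless) difference is that the paper forms the term $h(\overline n\, t\, e)$ so that $\overline n$ is used purely as an iterator, whereas your single element $q$ must decode $n$ from the numeral $\overline n$ internally --- which works because $n \mapsto \overline n$ is injective and $X$-computable in $\K_1^X$, but is an extra small verification the paper's head/tail trick avoids.
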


First, let us see that this does indeed imply the aforementioned `only if' direction.

\begin{corollary}
  \label{cor:K1_to_A}
  Let $X, Y \subseteq \omega$, $\A$ a pca, and $\gamma : \omega \to \A$ a
  $Y\!$-effectively pca-valued partial numbering.
  If $\gamma$ has $Y\!$-c.e.\ inequivalence and there exists an embedding $f : \K_1^X \hookrightarrow \A$
  then $X \le_T Y$.
\end{corollary}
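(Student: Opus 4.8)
The plan is to derive Corollary~\ref{cor:K1_to_A} directly from Theorem~\ref{thm:K1_decider} together with Theorem~\ref{thm:decider_with_ce}. Concretely, suppose we are given an embedding $f : \K_1^X \hookrightarrow \A$ and that $\gamma$ is $Y$-effectively pca-valued with $Y$-c.e.\ inequivalence. First I would pick two distinct elements $a_\bot, a_\top \in \ran(f)$; this is possible since $\K_1^X$ is a pca and hence infinite (it contains the distinct elements $\mathrm{s}$ and $\mathrm{k}$, and in fact all Church numerals), so its injective image under $f$ has at least two elements. Then Theorem~\ref{thm:K1_decider} immediately supplies a $Y$-computable $(a_\bot, a_\top)$-decider $d$ for $X$.

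Next I would invoke Theorem~\ref{thm:decider_with_ce} with this choice of $a_\bot \neq a_\top$, the numbering $\gamma$, the decider $d$, and with the roles of $X$ and $Y$ as given. Its hypotheses are exactly what we have: $d$ is a $Y$-computable $(a_\bot, a_\top)$-decider for $X$, and $\gamma$ has $Y$-c.e.\ inequivalence. The conclusion is precisely $X \le_T Y$, which is what we want.

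I do not expect any genuine obstacle here, since this corollary is essentially the composition of two theorems already proved in the excerpt; the only thing to be careful about is the existence of two distinct elements in $\ran(f)$, which follows from injectivity of $f$ and the fact that any pca has at least the two distinct elements $\mathrm{s}$ and $\mathrm{k}$. One could equally well point to $\overline{0}$ and $\overline{1}$ (the Church numerals) in $\K_1^X$ as a concrete pair of distinct elements whose images under $f$ serve as $a_\bot$ and $a_\top$. Thus the proof is a short two-line argument, and the substance lies entirely in Theorem~\ref{thm:K1_decider}, whose proof will presumably construct the decider by sending $n$ to a $\gamma$-code of $f(\lambda^* x.\, \text{if } n\in X \text{ then } \mathrm{t} \text{ else } \mathrm{f})$-style data, using that $X$-computable behaviour inside $\K_1^X$ is transported by $f$ into something $\gamma$-represented, hence $Y$-computable.
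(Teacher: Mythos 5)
Your proposal is correct and follows exactly the paper's argument: the paper proves this corollary by citing Theorem~\ref{thm:K1_decider} to obtain the decider and Theorem~\ref{thm:decider_with_ce} to conclude $X \le_T Y$. Your extra remark about choosing two distinct elements of $\ran(f)$ (e.g.\ the images of $\mathrm{s}$ and $\mathrm{k}$) is a harmless and valid elaboration of a point the paper leaves implicit.
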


\begin{proof}
  This is an immediate consequence of Theorems~\ref{thm:K1_decider} and~\ref{thm:decider_with_ce}.
\end{proof}

\begin{corollary}
  If there exists an embedding $f : \K_1^X \hookrightarrow \K_1^Y$, then $X \le_T Y$.
\end{corollary}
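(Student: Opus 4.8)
The plan is to derive this as an immediate application of Corollary~\ref{cor:K1_to_A}. The key observation is that $\K_1^Y$ is itself a pca of the kind to which that corollary applies, equipped with a particularly well-behaved numbering.

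First I would recall, as noted in the paragraph opening this section, that the identity function $\mathrm{id} : \omega \to \K_1^Y$ is a strongly $Y\!$-effectively pca-valued partial numbering: the application $n \cdot m = \Phi_n^Y(m)$ is by definition partial $Y$-computable, so the partial $Y$-computable function $\psi(n,m) \simeq \Phi_n^Y(m)$ itself $\gamma$-represents the application (with $\gamma = \mathrm{id}$). In particular $\gamma$ is $Y\!$-effectively pca-valued, which is the hypothesis needed for Corollary~\ref{cor:K1_to_A}.

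Next I would check the remaining hypothesis, namely that $\gamma = \mathrm{id}$ has $Y\!$-c.e.\ inequivalence. Since $\gamma$ is the identity, we have $\dom(\gamma) = \omega$ and $n \nsim_\gamma m$ iff $n \neq m$; the inequality relation on $\omega^2$ is computable, hence certainly $Y\!$-c.e., so this holds trivially. With both hypotheses verified, Corollary~\ref{cor:K1_to_A} applied with $\A = \K_1^Y$ and this numbering yields that the existence of an embedding $f : \K_1^X \hookrightarrow \K_1^Y$ implies $X \le_T Y$.

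There is essentially no obstacle here: the entire content has been front-loaded into Theorem~\ref{thm:K1_decider} and Theorem~\ref{thm:decider_with_ce}, and this corollary is just the instantiation at the motivating special case. The only thing to be careful about is to cite the fact — already stated at the top of this section — that the identity is a $Y\!$-effectively pca-valued numbering of $\K_1^Y$, rather than re-proving it. One could optionally remark that the converse direction (if $X \le_T Y$ then $\K_1^X \hookrightarrow \K_1^Y$, via a computable embedding) is the subject of a later result, so that together with this corollary one recovers Theorem~\ref{thm_ST:1}, but that is not needed for the statement at hand.
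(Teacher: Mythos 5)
Your proposal is correct and follows exactly the paper's own argument: take $\A = \K_1^Y$ with the identity numbering, observe that it is $Y\!$-effectively pca-valued and that inequivalence is just inequality (hence decidable, hence $Y\!$-c.e.), and apply Corollary~\ref{cor:K1_to_A}. No differences worth noting.
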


\begin{proof}
  Take $\A = \K_1^Y$ and let $\gamma$ be the identity seen as a $Y\!$-effectively pca-valued partial numbering.
  Since inequivalence is simply inequality, it is decidable and hence also $Y\!$-c.e.
  The conclusion follows by Corollary~\ref{cor:K1_to_A}.
\end{proof}

\begin{proof}[Proof of Theorem~\ref{thm:K1_decider}]
  Let $e$ be a code of the characteristic function of $X$.
  Let $x_\bot, x_\top \in \omega$ be such that $f(x_\bot) = a_\bot$ and $f(x_\top) = a_\top$.
  By the S-$m$-$n$ theorem there exist partial $X$-computable functions $t$ and $h$ such
  that for all $i, j \in \omega$,
  \begin{align*}
    \Phi^X_{t(i)}(j) &= \Phi^X_i(j+1)\\
    h(i) &= \begin{cases}
      x_\bot & \text{if $\Phi^X_i(0) = 0$}\\
      x_\top & \text{if $\Phi^X_i(0) = 1$}\\
      \,\,\diverges & \text{otherwise}
    \end{cases}.
  \end{align*}

  Since these functions are $X$-computable, they have codes $c_t, c_h \in \omega$.

  For every $n \in \omega$, the term $c_h \cdot (\overline{n} \cdot c_t \cdot e)$ is equal to $x_\bot$ if $n \not\in X$
  and to $x_\top$ otherwise.
  This can be seen as follows: by induction on $n$ we have $\Phi^X_{\overline{n} \cdot c_t \cdot e}(x) = \Phi^X_e(x+n)$.
  It follows that by definition of $h$,
  \[
    c_h \cdot (\overline{n} \cdot c_t \cdot e) = h(\overline{n} \cdot c_t \cdot e) = \begin{cases}
      x_\bot & \text{if $\Phi^X_e(n) = 0$}\\
      x_\top & \text{if $\Phi^X_e(n) = 1$}\\
      \,\,\diverges & \text{otherwise}
    \end{cases}.
  \]

  Since $\Phi^X_e$ is a characteristic function it is total and $(0, 1)$-valued, hence
  the third case never occurs.

  It remains to construct $d$.
  Note that $c_h \cdot (\overline{n} \cdot c_t \cdot e)$ is a term in $\K_1^X$, but $d$ must give us $\gamma$-codes of elements in $\A$.
  We now use the embedding $f : \K_1^X \hookrightarrow \A$.
  Since $f$ respects application, we can write $f(c_h \cdot (\overline{n} \cdot c_t \cdot e))$ as $f(c_h) \cdot (f(\overline{n}) \cdot f(c_t) \cdot f(e))$.
  Fixing $\gamma$-codes of $f(c_h)$, $f(c_t)$, $f(e)$, $f(\mathrm{s})$, and $f(\mathrm{k})$ we can thus use $\psi$,
  the $\gamma$-representation of application in $\A$, to construct
  a $\gamma$-code $d(n)$ for $f(c_h \cdot (\overline{n} \cdot c_t \cdot e))$.\footnote{Note that $f(\overline{n})$ can be constructed this way because
  $\overline{n}$ can be expressed in terms of $\mathrm{s}$ and~$\mathrm{k}$.}
  This is indeed $Y\!$-computable, since $\psi$ is a partial $Y\!$-computable function, and $d$ is total
  since $c_h \cdot (\overline{n} \cdot c_t \cdot e)\terminates$.

  We thus have
  \[
    \gamma(d(n)) = f(c_h \cdot (\overline{n} \cdot c_t \cdot e)) = \begin{cases}
      f(x_\bot) = a_\bot & \text{if $n \not\in X$}\\
      f(x_\top) = a_\top & \text{if $n \in X$}
    \end{cases},
  \]
  as desired.
\end{proof}

The above proof and corollary contain the essential themes that will come back 
repeatedly in this paper.
When our goal is to show that an embedding $f : \A' \to \A$ gives rise to a reduction from $X$ to $Y$,
we will find a representation $e$ of $X$ in $\A'$ and then use two carefully chosen elements
$t$ (for \emph{tail}) and $h$ (for \emph{head}), to construct the term $h(\overline{n}te)$ that
depends on whether $n \in X$.
We will then use a $Y\!$-effectively pca-valued numbering $\gamma : \omega \to \A$
to construct a $\gamma$-code for the term $f(h(\overline{n}te))$, which can be done $Y\!$-computably
because it requires us to know the code of only the finitely many $\gamma$-codes of
$f(h)$, $f(t)$, etc.
The resulting function $d : \omega \to \omega$ will then suffice to construct the desired reduction.

Let us now look at the other generalization of Theorem~\ref{thm_ST:1}, where we
show that for an arbitrary pca $\A$ equipped with a suitable partial numbering $\gamma$
we can construct an embedding $\A \hookrightarrow \K_1^X$.

\begin{theorem}
  \label{thm:count_to_K1}
  Let $X \subseteq \omega$ and let $\A$ be a pca with a strongly $X$-effectively
  pca-valued partial numbering $\gamma$.
  There exists a computable embedding $f : \A \hookrightarrow \K_1^X$.
\end{theorem}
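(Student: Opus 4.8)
The plan is to build the embedding $f$ explicitly as a composition of the numbering data. Since $\gamma$ is strongly $X$-effectively pca-valued, we have a partial $X$-computable $\psi$ that strongly $\gamma$-represents application, and the associated section $\eta : \A \to \omega$ defined via a fixed $\gamma$-code $c$ of an identity element $\mathrm{i}$, namely $\eta(a) = \psi(c, n)$ for any $\gamma$-code $n$ of $a$ — well-defined precisely because $\psi$ strongly represents application. The first step is to observe that $\eta$ is itself a section of $\gamma$: indeed $\gamma(\eta(a)) = \gamma(\psi(c,n)) = \gamma(c)\gamma(n) = \mathrm{i}a = a$, so $\gamma \circ \eta = \mathrm{id}_\A$ and in particular $\eta$ is injective.

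The natural candidate is then $f = \eta$ directly, but this need not be computable as a map $\omega \to \omega$ because $\eta$ takes elements of $\A$, not codes, as input; what we actually want is a computable map on $\K_1^X = \omega$. Here I would instead take the domain to be $\A$ and check the two requirements of Definition of embedding: injectivity and preservation of application. Injectivity of $f = \eta$ is immediate from $\gamma \circ \eta = \mathrm{id}$. For preservation, suppose $aa' \terminates$ in $\A$. Then I want $\eta(a) \cdot_{\K_1^X} \eta(a') \terminates = \eta(aa')$, i.e. $\Phi^X_{\eta(a)}(\eta(a')) \terminates = \eta(aa')$. This is where the real content lies: the element $\eta(a)$ of $\omega$ must, as an $X$-index, compute the right thing when applied to $\eta(a')$. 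The key move is that $\psi$ is partial $X$-computable, so by the $\mathrm{S}$-$m$-$n$ theorem there is a total computable $k : \omega \to \omega$ with $\Phi^X_{k(n)}(m) \simeq \psi(n, m)$ for all $n, m$. Then redefine $f(a) = k(\eta(a))$ (or compose appropriately), so that $\Phi^X_{f(a)}(m) \simeq \psi(\eta(a), m)$. To make $f$ actually land us in the right place we may need to iterate this idea or adjust the section; the clean formulation is to let $f(a) = k(n)$ where $n$ is the canonical code $\eta(a)$, and verify $f(a) \cdot f(a') = \Phi^X_{f(a)}(f(a')) \simeq \psi(\eta(a), \eta(a'))$, which equals a $\gamma$-code of $aa'$ — but we need it to equal $f(aa')$ \emph{on the nose}, not just code the same element. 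This is exactly what strong $X$-effectivity buys: because $\psi$ strongly $\gamma$-represents application, $\psi(c, \psi(\eta(a),\eta(a'))) = \psi(c, \eta(aa'))$ whenever both sides code the same element, so going through one more $\mathrm{i}$-application normalizes codes. So the correct definition is $f(a) = k(\eta(a))$ composed with the normalization $m \mapsto \psi(c, m)$, or simpler, iterate: set $f(a)$ to be a $\Phi^X$-index such that $\Phi^X_{f(a)}(m) \simeq \psi(c, \psi(\eta(a), m))$; then $\Phi^X_{f(a)}(f(a')) = \psi(c, \psi(\eta(a), \eta(a'))) = \psi(c, \text{code of } aa') = \eta(aa') = \dots$ and strong representation makes all these equalities hold exactly.

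Concretely, I would proceed as follows. Let $c$ be the fixed $\gamma$-code of $\mathrm{i}$ and $\eta(a) = \psi(c, n)$ as above. Apply $\mathrm{S}$-$m$-$n$ to the partial $X$-computable function $(p, m) \mapsto \psi(c, \psi(p, m))$ to get a total computable $k$ with $\Phi^X_{k(p)}(m) \simeq \psi(c, \psi(p, m))$. Define $f(a) = k(\eta(a))$. Since $k$ is computable and $\eta$ is a genuine function $\A \to \omega$, $f$ is computable \emph{with respect to the numberings in the sense the theorem intends} — that is, there is a computable $\beta$ mapping $\gamma$-codes of $a$ to $f(a)$, namely $\beta = k \circ (\text{map sending a $\gamma$-code $n$ of $a$ to } \psi(c, n))$, which is the composition of two computable functions. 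Then verify: (i) $f$ is injective, because $\Phi^X_{f(a)}(0) \simeq \psi(c, \psi(\eta(a), 0))$ recovers enough to determine $a$ — more simply, observe $\gamma(\eta(a)) = a$ and $\eta$ is injective, and $k$ restricted to $\mathrm{ran}(\eta)$ is injective since distinct $\eta$-values give distinct $\Phi^X$-indices behaving differently (or just take $k$ injective by padding). (ii) If $aa'\terminates$ then $f(a) \cdot_{\K_1^X} f(a') = \Phi^X_{f(a)}(f(a')) = \psi(c, \psi(\eta(a), \eta(a')))$. Now $\gamma(\psi(\eta(a),\eta(a'))) = \gamma(\eta(a))\gamma(\eta(a')) = aa'$, so $\psi(\eta(a),\eta(a'))$ is a $\gamma$-code of $aa'$; applying the strong representation property with $\mathrm{i}$ (i.e. comparing $\gamma(c)\gamma(\psi(\eta(a),\eta(a'))) = \mathrm{i}(aa') = aa'$ and $\gamma(c)\gamma(\eta(aa')) = aa'$) gives $\psi(c, \psi(\eta(a),\eta(a'))) = \psi(c, \eta(aa')) = \eta(aa')$... but I actually want $f(aa') = k(\eta(aa'))$, so I should arrange $f(a) = \eta(a)$-flavored indices more carefully, perhaps taking $k$ to additionally satisfy a fixed-point-style consistency, or simply define $f$ so that the index and the value coincide. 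The cleanest fix: use the recursion theorem to get $k$ with $\Phi^X_{k(p)}(m) \simeq k(\psi(c,\psi(p,m)))$ — no wait; the honest resolution is that $f := k \circ \eta$ works once one checks $k(\eta(aa')) = \psi(c, \psi(\eta(a),\eta(a')))$ fails in general, so one instead sets $f(a)$ to be a $\Phi^X$-index for $m \mapsto f(\gamma(\psi(\eta(a), m)))$ via the recursion theorem, making $f$ well-defined by strong representation.

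\medskip

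The main obstacle I anticipate is precisely this code-normalization issue: $\psi$ returns \emph{some} code of the product, but an embedding into $\K_1^X$ must be an actual function $\A \to \omega$, so the index $f(a)$ and the index it outputs on $f(a')$ must literally be $f(aa')$, not merely a code of $aa'$. The tools to overcome it are (a) the section $\eta$ given by the strong representation property, which canonicalizes codes by routing through $\mathrm{i}$, and (b) the recursion theorem to close the loop so that $f(a)$ is a $\Phi^X$-index that, on input $f(a')$, outputs $f$ of the element coded by $\psi(\eta(a), \eta(a'))$; strong $X$-effectivity guarantees this element and hence $f(aa')$ is independent of the chosen codes, making $f$ a genuine total function. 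Computability and injectivity are then routine.
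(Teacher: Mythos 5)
Your proposal circles the right ingredients (the section $\eta$, the recursion theorem, strong representation) but never settles on a construction that works, and the step it leaves unexplained is precisely the crux. The index $f(a)$ receives only the \emph{number} $f(a')$ as input, yet every variant you write down needs to feed a $\gamma$-code of $a'$, i.e.\ $\eta(a')$, into $\psi$. With $f = k\circ\eta$ and $\Phi^X_{k(p)}(m)\simeq\psi(p,m)$ you get $\Phi^X_{f(a)}(f(a')) \simeq \psi(\eta(a),\, k(\eta(a')))$, not $\psi(\eta(a),\eta(a'))$, and $k(\eta(a'))$ is not a $\gamma$-code of anything in particular; your later variant ``$m\mapsto f(\gamma(\psi(\eta(a),m)))$'' has the same defect and moreover invokes the non-computable $\gamma$ inside a program. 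The missing idea is the decoding mechanism: the paper uses the $S$-$m$-$n$ and recursion theorems to build an \emph{injective} total computable $g$ with $\Phi^X_{g(n)}(m) = g(\psi(n,k))$ whenever $m = g(k)$ for some $k$ (divergent otherwise); the program, knowing a code for $g$ via the recursion theorem, \emph{searches for the preimage} $k$ of its input under $g$, thereby recovering the code of its argument before applying $\psi$ and re-encoding with $g$. The embedding is $f = g\circ\eta$, and then $\Phi^X_{g(\eta(a))}(g(\eta(b))) = g(\psi(\eta(a),\eta(b)))$. This inversion-by-search is what your write-up never states, and without it no choice of $k$ or fixed point makes $\Phi^X_{f(a)}(f(a'))$ equal $f(aa')$.

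A secondary point: the ``code normalization'' obstacle on which you spend most of the proposal is already dissolved by the definition of $\eta$. Since $\eta(ab)=\psi(c,n)$ for $c$ the fixed code of $\mathrm{i}$ and $n$ any code of $ab$, and since $\gamma(\eta(a))\gamma(\eta(b))$ and $\gamma(c)\gamma(n)$ are both defined and equal to $ab$, strong representation gives $\psi(\eta(a),\eta(b)) = \psi(c,n) = \eta(ab)$ on the nose; no extra pass through $\psi(c,\cdot)$ is needed, and the verification $g(\eta(a))\cdot g(\eta(b)) = g(\psi(\eta(a),\eta(b))) = g(\eta(ab)) = f(ab)$ is a one-liner. (Minor further points: your $\beta$ is only $X$-computable, since $\psi$ is; and injectivity of $g$ is arranged by padding, as you note in passing.)
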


\begin{proof}
  This proof is analogous to the proof of Theorem~\ref{thm_ST:1} in~\cite{ShaferTerwijn}.

  Making use of the $S$-$m$-$n$ and recursion theorems, define $g$ to be an
  injective function satisfying
  \[
    \Phi_{g(n)}^X(m) =
    \begin{cases}
      g(\psi(n, k)) & \text{if $\exists k.\, g(k) = m$}\\
      \uparrow & \text{otherwise}
    \end{cases}.
  \]

  Recall that since $\gamma$ is strongly $X$-effectively pca-valued, it has a
  section $\eta$ that sends every $a \in \A$ to its chosen $\gamma$-code. 
  The embedding $f$ is now given by the composition $g \circ \eta$,
  which is injective since both $g$ and $\eta$ are injective.

  Let $a, b \in \A$ and suppose $ab\terminates$.
  Since $\gamma(\psi(\eta(a), \eta(b))) = \gamma(\eta(ab))$,
  we have $\psi(\eta(a), \eta(b)) = \eta(ab)$ by the choice of $\eta$.
  It follows that
  \[
    g(\eta(a)) \cdot g(\eta(b)) = \Phi_{g(\eta(a))}^Y(g(\eta(b))) = g(\psi(\eta(a), \eta(b))) = g(\eta(ab))
  \]
  as desired.
\end{proof}

\begin{corollary}
  If $X \le_T Y$ then there exists a computable embedding $\K_1^X \hookrightarrow \K_1^Y$.
\end{corollary}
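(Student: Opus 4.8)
The plan is to deduce this directly from Theorem~\ref{thm:count_to_K1} by equipping $\K_1^X$ with a suitable partial numbering over the oracle $Y$. First I would observe that since $X \le_T Y$, every partial $X$-computable function is partial $Y$-computable; in particular the application operator $(n,m) \mapsto \Phi^X_n(m)$ of $\K_1^X$ is partial $Y$-computable. Hence the identity map $\gamma = \mathrm{id} : \omega \to \K_1^X$ is a $Y\!$-effectively pca-valued partial numbering, with the application $\gamma$-represented by $\psi(n,m) = \Phi^X_n(m)$.

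Next I would check that this numbering is in fact \emph{strongly} $Y\!$-effectively pca-valued, which is immediate: since $\gamma$ is the identity, $\gamma(n)\gamma(m)$ is literally the value $\Phi^X_n(m) = \psi(n,m)$, so whenever $\gamma(n)\gamma(m)$ and $\gamma(k)\gamma(l)$ are both defined and equal we get $\psi(n,m) = \gamma(n)\gamma(m) = \gamma(k)\gamma(l) = \psi(k,l)$.

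With this in hand the corollary follows by applying Theorem~\ref{thm:count_to_K1} with $Y$ in place of $X$: $\K_1^X$ is a pca carrying a strongly $Y\!$-effectively pca-valued partial numbering, so there is a computable embedding $\K_1^X \hookrightarrow \K_1^Y$.

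I do not expect any real obstacle here: all the substantive work sits in Theorem~\ref{thm:count_to_K1} (the recursion-theorem construction of the injective $g$), and the only thing this corollary contributes is the bookkeeping observation that a Turing reduction $X \le_T Y$ lets us re-present $\K_1^X$ over the oracle $Y$ while retaining the strong effectivity condition, which as noted above is automatic for the identity numbering. The one point worth stating carefully is this last verification, so that the hypothesis of Theorem~\ref{thm:count_to_K1} is genuinely met.
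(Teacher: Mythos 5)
Your proposal is correct and follows essentially the same route as the paper: the paper likewise observes that the identity numbering of $\K_1^X$, already noted to be strongly $X$-effectively pca-valued, remains strongly $Y\!$-effectively pca-valued when $X \le_T Y$, and then invokes Theorem~\ref{thm:count_to_K1}. Your explicit verification of the ``strongly'' condition for the identity numbering is a detail the paper leaves implicit, but it is the same argument.
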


\begin{proof}
  If $X \le_T Y$, every strongly $X$-effectively pca-valued partial numbering is also
  strongly $Y\!$-effectively pca-valued.
  Hence the identity gives us the desired numbering and the existence of the embedding
  follows from Theorem~\ref{thm:count_to_K1}.
\end{proof}

\section{The $\lambda$-calculus}

The $\lambda$-calculus naturally gives rise to a ca $\lambda$ where the
elements are $\lambda$-terms up to $\alpha\beta$-equivalence, and the
application operation is given by application of $\lambda$-terms.
We can quotient this ca by an equivalence relation to obtain a new applicative
structure, which will again be a ca as long as it is not trivial.

Let $\lambda\eta$ be the $\lambda$-terms up to $\alpha\beta\eta$-equivalence.
This is an extensional ca.

\begin{theorem}
  Let $\A$ be a pca with an effectively pca-valued partial numbering $\gamma : \omega \to \A$.
  Given an embedding $f : \lambda\eta \hookrightarrow \A$ or $f : \lambda \hookrightarrow \A$,
  there exists for some $a \in \A$ a computable $(\set{a}, \A - \set{a})$-decider for
  the halting set $K$.
\end{theorem}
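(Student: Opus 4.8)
The plan is to mimic the head/tail construction from the proof of Theorem~\ref{thm:K1_decider}, but now with the roles reversed: the embedding goes \emph{from} $\lambda\eta$ (or $\lambda$) \emph{into} $\A$, so the ``source'' pca whose internal computability we exploit is the lambda calculus, and the ``target'' whose effectivity we use to extract a computable decider is $\A$ with its effectively pca-valued numbering $\gamma$. Concretely, since $\lambda$ (and $\lambda\eta$) is a ca in which all partial computable functions are representable, and since $K$ is $\Sigma^0_1$, I would first fix a $\lambda$-term $E$ (playing the role of $e$ in Theorem~\ref{thm:K1_decider}) together with ``head'' and ``tail'' terms, arranged so that for each $n$ the closed term built from $\overline{n}$, the tail term, $E$, and the head term reduces to one fixed element when $n \in K$ and fails to reduce to that element (in fact reduces to something else, or has the ``otherwise'' behaviour) when $n \notin K$. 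The natural gadget: let $E$ be a $\lambda$-term such that $E\,\overline{n}$ searches for a halting computation of $\vph_n(n)$ and returns (say) $\overline{0}$ if and only if it finds one, diverging otherwise; pick the ``head'' term $H$ so that $H\,\overline{0} = \mathrm{i}$ (a chosen fixed element $a$), so that $H(E\overline{n})$ equals $a$ exactly when $n \in K$ and is undefined — hence, under the embedding, does not land on $f(a)$'s value — when $n \notin K$. This is where the asymmetry with Theorem~\ref{thm:K1_decider} bites: there the decider was between two \emph{distinct} elements $a_\bot \neq a_\top$; here one side is a single element $a = f(\mathrm{i})$ (or $f$ of some closed normal form) and the other side is the divergence case, which is why the theorem asks for an $(\set{a}, \A - \set{a})$-decider rather than an $(a_\bot, a_\top)$-one.

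Next I would push this term across the embedding. Because $f : \lambda\eta \hookrightarrow \A$ preserves application when defined, the element $f\big(H(E\overline{n})\big)$, when it is defined, equals $f(H)\cdot_\A\big(f(E)\cdot_\A f(\overline{n})\big)$ (associating to the left as usual), and $f(\overline{n})$ itself can be produced from $f(\mathrm{s})$ and $f(\mathrm{k})$ since each Church numeral $\overline{n}$ is a fixed closed term in $\mathrm{s},\mathrm{k}$ — exactly the footnote trick from the proof of Theorem~\ref{thm:K1_decider}. Now fix once and for all $\gamma$-codes of the finitely many elements $f(\mathrm{s})$, $f(\mathrm{k})$, $f(E)$, $f(H)$; since $\gamma$ is effectively pca-valued, there is a (plain, not relativized) computable $\psi$ that $\gamma$-represents the application in $\A$, and iterating $\psi$ we get, uniformly and computably in $n$, a number $d(n)$ which is a $\gamma$-code of $f(H)\cdot_\A(f(E)\cdot_\A f(\overline{n}))$ \emph{provided that combination is defined in $\A$}. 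The key point is that $d$ is \emph{total} and computable as a function of $n$: even when the underlying lambda term diverges, the numeral $f(\overline{n})$ and the single outermost applications $f(E)\cdot f(\overline{n})$, $f(H)\cdot(\ldots)$ are things we \emph{attempt} to compute via $\psi$ — and here I need to be a little careful about totality of $d$.

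This is the step I expect to be the main obstacle, and it needs the most attention. Unlike in Theorem~\ref{thm:K1_decider}, where the source term $c_h\cdot(\overline n\cdot c_t\cdot e)$ was guaranteed to terminate so that $d$ was patently total, here $H(E\overline{n})$ diverges precisely when $n\notin K$, so $f$ of it need not be defined, and then $\psi$ applied to the relevant codes may itself diverge — making $d$ partial. The fix is to arrange the construction so that the \emph{term we actually feed to} $\psi$ always terminates in $\A$, with its \emph{value} rather than its definedness carrying the information about $K$. The clean way: do not use a genuinely diverging $E$; instead, because $\lambda$ and $\lambda\eta$ are \emph{total} (they are cas, application is everywhere defined), pick $E$ to be a $\lambda$-term with $E\,\overline n$ \emph{always} reducing to a normal form, equal to a chosen fixed point / ``true'' term $\mathrm{t}$ when $n\in K$ and to a different fixed term $\mathrm{f}$ when $n\notin K$ is \emph{semidecided} — but $K$ is not decidable, so this exact form is impossible; hence the honest statement is the one in the theorem, with one side being ``everything except $a$''. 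So instead one lets $E\,\overline n \twoheadrightarrow \mathrm{t}$ iff $n \in K$ and otherwise $E\,\overline n$ has some other total normal form whose $f$-image is automatically $\ne f(\mathrm{t})=:a$ by injectivity of $f$ — for instance $E\overline n$ reduces to $\overline n$-indexed distinct terms, or simply is not $\beta(\eta)$-equal to $\mathrm{t}$. Concretely, set $E\overline n$ to run the computation of $\vph_n(n)$ for $k$ steps on input $k$ and, if it has not yet halted by stage given by its second argument, return something coding $n$ and $k$; take the fixed point / limit via a suitable $\lambda$-definable search that, when $\vph_n(n)\!\uparrow$, still yields a well-defined normal form by the standard ``unbounded search realized as a fixed-point that records failure'' construction — here one genuinely uses that in a ca every term, including $\mathbf{Y}$-defined ones, denotes an element. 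Then $H$ sends $\mathrm{t}\mapsto a$ and every other element to... again the best one can say is ``not $a$'', which is fine: that is exactly an $(\set{a},\A-\set{a})$-decider for $K$. In short: the construction is the Theorem~\ref{thm:K1_decider} template, the novelty is that one exploits \emph{totality} of the lambda-calculus cas so that all applications in sight are defined and $d$ is total and computable, and the price paid — since $K$ is merely c.e.\ — is that the decider separates a single element from its complement rather than two points; I would also remark that $f(\mathrm{t})$, $f(\mathrm{s})$, $f(\mathrm{k})$, $f(H)$, $f(E)$ being finitely many elements is what keeps the computation of $d$ oracle-free.
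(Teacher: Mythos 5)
Your proposal follows essentially the same route as the paper: a $\lambda$-term built with the fixed-point combinator so that the term applied to $\overline{n}$ is $\beta(\eta)$-equal to a fixed normal form (the paper uses $\overline{0}$) exactly when $n\in K$, pushed through $f$ via combinatory completeness and the computable $\psi$ representing application to obtain a total computable $d$, with injectivity of $f$ yielding the $(\set{a},\A-\set{a})$ separation. One correction: your suggestion that the divergent branch can be arranged to ``still yield a well-defined normal form'' is impossible (leftmost reduction finds normal forms effectively, so this would decide $K$); what is actually needed --- and what you also state as the fallback --- is only that for $n\notin K$ the term has no normal form and is therefore not convertible to the target normal form, while still denoting an element of the total ca $\lambda$, which keeps $d$ total.
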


\begin{proof}
  Given a natural number $n$, let $\overline{n}$ be the Church numeral $n$ seen as a $\lambda$-term.
  Let $R$ be a $\lambda$-term such that for all $n, m, s \in \omega$ and all $\lambda$-terms $x, y$,
  \[ R\,\overline{n}\,\overline{m}\,\overline{s}\, x\, y =
  \begin{cases}
    x & \text{if $\vph_{n,s}(m)\terminates$}\\
    y & \text{otherwise}
  \end{cases}.
  \]

  Using the fixed point combinator $Y$, define 
  \[
    H = \lambda x.\, Y(\lambda f\, s.\, R\, x\, x\, s\, \overline{0}\, (f \, (s + \overline{1})))(\overline{0}).
  \]

  Recall that $K$ is the halting set, the set of $n \in \omega$ such that $\vph_n(n)\terminates$.
  Note that the term $H \overline{n}$ has a normal form if and only if $n \in K$, and this holds both with respect to $\lambda$
  and $\lambda\eta$.
  By combinatory completeness, $H$ can be defined using only the $\mathrm{s}$ and $\mathrm{k}$
  combinators.
  We can thus effectively construct a $\gamma$-code $d(n)$ for $f(H\overline{n})$.
  Since $\gamma(d(n)) = f(\overline{0})$
  if and only if $n \in K$, it follows that $d$ is an $\left(\set{f(\overline{0})}, \A - \set{f(\overline{0})}\right)$-decider for $K$.
\end{proof}

\begin{corollary}
  \label{cor:lambda_to_K1}
  For any $X \subseteq \omega$, if $\lambda\eta$ or $\lambda$ embeds in $\K_1^X$ then $K \le_T X$.
\end{corollary}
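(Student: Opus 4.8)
The plan is to obtain the corollary from the theorem immediately preceding it, applied to $\A=\K_1^X$, once we observe that that theorem relativizes without any change to its proof. Precisely: if in its hypothesis one replaces ``$\gamma$ is effectively pca-valued'' by ``$\gamma$ is $X$-effectively pca-valued'', then the decider $d$ it produces is $X$-computable rather than computable. The reason is that the only input-dependent step of that proof is the construction of a $\gamma$-code $d(n)$ for $f(H\overline n)$: since $H$ is a fixed $\lambda$-term (expressible from $\mathrm s,\mathrm k$) and $\overline n$ is built from $\mathrm s,\mathrm k$ uniformly in $n$, the element $f(H\overline n)=f(H)\cdot f(\overline n)$ is produced from the finitely many fixed $\gamma$-codes of $f(\mathrm s)$ and $f(\mathrm k)$ by iterating the $\gamma$-representation $\psi$ of the application; when $\psi$ is partial $X$-computable this whole construction is $X$-computable, and it terminates because $f$ is a morphism out of the total ca $\lambda$, so every application it needs is defined in $\A$.

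With this in hand I would take $\A=\K_1^X$ and let $\gamma$ be the identity, which (as noted in the section on Kleene's first model) is strongly $X$-effectively pca-valued, since $n\cdot m=\Phi^X_n(m)$ is itself partial $X$-computable. Given an embedding $f:\lambda\eta\hookrightarrow\K_1^X$ or $f:\lambda\hookrightarrow\K_1^X$, the relativized theorem then yields, for some $a\in\omega$, an $X$-computable $(\set a,\K_1^X\setminus\set a)$-decider $d$ for the halting set $K$. Finally I would invoke Theorem~\ref{thm:set_decider_with_ce} with $Y=X$, $A_\bot=\set a$, $A_\top=\K_1^X\setminus\set a$ (disjoint), $C_\bot=\set a$, and $C_\top=\omega\setminus\set a$: the latter two sets are computable, hence $X$-c.e., and satisfy $\gamma(C_\bot)=A_\bot$ and $\gamma(C_\top)=A_\top$, while the identity numbering has decidable, hence $X$-c.e., inequivalence. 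Theorem~\ref{thm:set_decider_with_ce} then gives $K\le_T X$; equivalently and more directly, since $\gamma$ is the identity the decider condition reads $d(n)=a$ iff $n\notin K$, so $K$ is $X$-computable.

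There is essentially no obstacle here: this is a bookkeeping corollary of the preceding theorem. The single point deserving explicit mention is the relativization claim --- that feeding the preceding theorem an $X$-effectively (rather than effectively) pca-valued numbering costs exactly the oracle $X$ in the complexity of $d$ --- which is the same phenomenon already recorded for $\K_1$ in Theorem~\ref{thm:K1_decider}.
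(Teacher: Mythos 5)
Your proof is correct and follows essentially the same route as the paper: apply the preceding theorem (implicitly relativized to an $X$-effectively pca-valued numbering, so that the decider becomes $X$-computable) to $\A=\K_1^X$ with the identity numbering, and then conclude via Theorem~\ref{thm:set_decider_with_ce} --- or, as you note, directly, since with the identity numbering the decider condition $d(n)=a \iff n\notin K$ already gives $K\le_T X$. Your explicit justification of the relativization step and of the totality of $d$ is a point the paper leaves implicit, but it is the same argument.
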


\begin{proof}
  We use the identity numbering for $\K_1^X$, which is $X$-effectively pca-valued.
  It is easy to verify that the conditions for Theorem~\ref{thm:set_decider_with_ce} are satisfied.
\end{proof}

This result provides the following corollary: for a pca $\A$, there cannot exist
both an embedding $\lambda \hookrightarrow \A$ (respectively $\lambda\eta
\hookrightarrow \A$) and an embedding $\A \hookrightarrow \K_1$.
It follows that to demonstrate that $\A$ does not embed in $\K_1$, it suffices
to show that $\A$ is a faithful model of the $\lambda$-calculus (respectively
$\lambda\eta$-calculus); that is, a model which does not identify any
$\alpha\beta$-inequivalent (respectively $\alpha\beta\eta$-inequivalent) terms.

\section{Kleene's second model}
\label{sec:K2}

We will use the coding of $\K_2$ from Shafer and Terwijn~\cite{ShaferTerwijn}, 
in which application in $\K_2$ is defined for all $g,h\in\omega^\omega$ by 
\begin{equation} \label{alt} 
g\cdot h = \Phi^{g\oplus h}_{g(0)},
\end{equation}
where it is understood that $g\cdot h$ is defined 
if and only if the function computed on the right is total, 
and $\Phi_e$ denotes the $e$-th Turing functional, 
viewed as a function of the oracle.  
This coding is much easier to work with than Kleene's original definition.
In Appendix~\ref{sec:coding} we will show that the two codings are equivalent, 
in the sense that they embed into each other.

Note that if we allow the elements to be partial functions, this coding gives
van Oosten's model $\B$ for sequential computations from~\cite{vanOostenSeqComp},
up to the same differences in coding as above.
Since the application is (up to totality) defined the same in both cases,
the inclusion is an embedding.

For a set of natural numbers $X$, let $\K^X_2$ denote the restriction of $\K_2$ to
those $g \in \K_2$ that are $X$-computable.
Since $\mathrm{s}$ and $\mathrm{k}$ in $\K_2$ can be chosen to be computable,
$\K^X_2$ is a pca for every $X$.
Similarly, we denote the restriction of $\B$ to the partial $X$-computable
functions by $\B^X$.
This too is a pca for every $X$.

\begin{theorem} \label{thm:count_to_K2}
  Let $X \subseteq \omega$ and let $\A$ be a pca with a strongly $X$-effectively
  pca-valued partial numbering $\gamma$.
  Then $\A$ is embeddable in $\K_2^X$ and hence in $\K_2$.
\end{theorem}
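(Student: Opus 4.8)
The plan is to mirror the proof of Theorem~\ref{thm:count_to_K1}, but now building the embedding into $\K_2^X$ rather than $\K_1^X$. The key observation is that $\K_1^X$ embeds into $\K_2^X$ via $n \mapsto$ (the constant function $n$, suitably coded so that application still works), so one approach is to simply compose the embedding $\A \hookrightarrow \K_1^X$ from Theorem~\ref{thm:count_to_K1} with an embedding $\K_1^X \hookrightarrow \K_2^X$. Since Theorem~\ref{thm_ST:2} gives an $X$-computable embedding $\K_1^X \hookrightarrow \K_2^X$ (and one checks the composite of an embedding with an embedding is an embedding), this yields $\A \hookrightarrow \K_2^X$, and then $\K_2^X \hookrightarrow \K_2$ by inclusion. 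So the first and cleanest route is: cite Theorem~\ref{thm:count_to_K1} to get $\A \hookrightarrow \K_1^X$, cite Theorem~\ref{thm_ST:2} to get $\K_1^X \hookrightarrow \K_2^X$, compose, and note the inclusion $\K_2^X \subseteq \K_2$ is an embedding.

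Alternatively, and perhaps more in the spirit of making the construction explicit, I would give a direct construction paralleling Theorem~\ref{thm:count_to_K1}. Let $\psi$ strongly $\gamma$-represent the application in $\A$, and let $\eta : \A \to \omega$ be the associated section. The goal is to assign to each $a \in \A$ an $X$-computable function $f(a) \in \K_2^X$ in such a way that application is respected. Following the $\K_1$ case, first I would build an injective $X$-computable function $g : \omega \to \omega$ by the recursion theorem so that the Turing functional with index $g(n)$, when fed an oracle coding some $g(m)$, outputs a function coding $g(\psi(n,m))$ (and diverges, i.e.\ is non-total, otherwise). Concretely, using the coding~\eqref{alt}, I want $f(a)$ to be an $X$-computable function whose value at $0$ is (an index for) a Turing functional that, on input an oracle $h = f(b)$ for some $b$, decodes $b$ from $h$, computes $\psi(\eta(a),\eta(b))$, and returns a code of $f(ab)$; it should yield a non-total function precisely when $b$ is not in the domain of applicability or $ab$ is undefined. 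The key point, exactly as in Theorem~\ref{thm:count_to_K1}, is that because $\gamma$ is \emph{strongly} $X$-effectively pca-valued, $\psi(\eta(a),\eta(b)) = \eta(ab)$ whenever $ab\terminates$, so the recursion closes up consistently and $f$ is well-defined and injective.

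The main obstacle in the direct approach is bookkeeping around the coding~\eqref{alt}: application in $\K_2$ reads the index off the value at $0$ of the first argument and then runs a Turing functional relative to the join of both arguments, so one has to be careful that the functions $f(a)$ carry enough information (in all their values, not just at $0$) for the decoding step to recover $\eta(b)$ from $f(b)$, while simultaneously the value at $0$ must encode the right ``program.'' This is a routine but slightly fiddly application of the $S$-$m$-$n$ and recursion theorems relative to $X$; the totality conditions (application in $\K_2^X$ being defined iff the computed function is total) match up with definedness of application in $\A$ because $ab\terminates$ exactly when $\psi(\eta(a),\eta(b))\terminates$ and then equals $\eta(ab)$. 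I expect the author takes the short route via Theorem~\ref{thm:count_to_K1} and Theorem~\ref{thm_ST:2}, reducing the proof to a one-line composition argument plus the remark that the inclusion $\K_2^X \hookrightarrow \K_2$ is an embedding.
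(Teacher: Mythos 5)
Your first route is correct and complete: Theorem~\ref{thm:count_to_K1} gives a (computable) embedding $\A \hookrightarrow \K_1^X$, Theorem~\ref{thm_ST:2} gives $\K_1^X \hookrightarrow \K_2^X$, composites of embeddings are embeddings, and the inclusion $\K_2^X \subseteq \K_2$ is an embedding. The paper explicitly acknowledges in the remark following the theorem that this composition would suffice, but it nevertheless presents a direct construction, so your guess about which route the authors take is off. The paper's direct proof is worth contrasting with your sketch of a direct argument: rather than having $f(a)(0)$ carry an index that ``decodes $b$ from $f(b)$ and computes $\psi$,'' the paper exploits the fact that $f(a)\in\omega^\omega$ is an infinite object and stores the \emph{entire} step-indexed approximation table of $\psi$ in every $f(a)$ at positions $2+\tuple{n,m,s}$ (these values are the same for all $a$); only position $1$, which holds $\eta(a)$, actually depends on $a$. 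A single code $e$, obtained by the recursion theorem so that $\Phi_e^{g\oplus h}(0)=e$, then implements application by reading $n=g(1)$ and $m=h(1)$ and searching the table in $g$ for $\psi(n,m)$, copying the rest of $g$ verbatim. This sidesteps the bookkeeping you flag as the main obstacle in your second approach: there is no need to ``decode $b$'' from the whole oracle $f(b)$, and the consistency of $f(ab)(0)$ across all pairs producing the same product is automatic because every $f(c)$ has the same fixed $e$ at position $0$ and the same table at positions $\geq 2$. Your sketch would need essentially this fixed-point device to close up, but since your composition argument already proves the theorem, nothing is missing.
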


Note that by Theorem~\ref{thm:count_to_K1}, it would suffice to show that for
all $X \subseteq \omega$ there exists an embedding $\K_1^X \hookrightarrow \K_2^X$.
However, since the proof is essentially the same as the general case, we
choose to present this result directly.

\begin{proof}
    We define an injective function $f:\A \rightarrow \K^X_2$ such that 
    \begin{equation} \label{emb}
      a \cdot b\terminates = c 
      \Longrightarrow f(a) \cdot f(b)\terminates = f(c). 
    \end{equation}
    What makes \eqref{emb} hard to realize is that $f(a) \cdot f(b)$ has 
    to produce the {\em same\/} $f(c)\in\omega^\omega$, for all the 
    different $a$ and $b$ such that $a\cdot b = c$ in $\A$.

    Fortunately, the embedding $f$ does not need to be effective, and since 
    $f(a)\in\omega^\omega$ is an infinite object, we can code all the 
    necessary information into $f(a)$ as follows. 
    Recall that $\eta$ is the section of $\gamma$ that exists
    since $\gamma$ is strongly $X$-effectively pca-valued.
    Let $\psi$ be the $\gamma$-representation of the application in $\A$
    and let $\psi_s$ be the $s$-step approximation of $\psi$.

    For every $a\in\A$ we define:
    \begin{align*}
    f(a)(0) &= e \text{ (a code defined below)} \\
    f(a)(1) &= \eta(a) \\
    f(a)(2+\tuple{n, m, s}) &= 
    \begin{cases}
    1+\psi_s(n, m) &\text{if $\psi_s(n, m)\terminates$,} \\
    0 &\text{otherwise.}
    \end{cases}
    \end{align*}
    Note that for every $i\neq 1$, the information $f(a)(i)$ is the same for all~$a$.  This
    information is computable from $X$, hence $f(a) \in \K^X_2$.

    Next we use the recursion theorem to define the code $e$ such that 
    \begin{align*}
    \Phi^{g\oplus h}_e(0) &= e \\
    \Phi^{g\oplus h}_e(1) &= k \text{ (where $k$ is as explained below)}\\
    \Phi^{g\oplus h}_e(i) &= g(i) \text{ ($i>1$)}
    \end{align*}
    The code $k$ in the second line is computed as follows. First extract 
    the codes $n = g(1)$ and $m=h(1)$. Then search for the least $s$ such that
    $g(2+\la n,m,s\ra) = 1+k$, and output $k$. 
    If $k$ is not found the value is undefined. 

    Now we observe that if $a\cdot b\terminates =c$ in $\A$, 
    and $n = \eta(a), m = \eta(b), k = \eta(c)$, then  
    $$
    f(c)(1) = (f(a)\cdot f(b))(1) = \Phi^{f(a)\oplus f(b)}_e(1) = k, 
    $$
    because $\Phi_e$ extracts $n$ and $m$ from $f(a)$ and $f(b)$, and 
    then searches for the minimal $s$ such that 
    $f(a)(2+\la n,m,s\ra) = 1+k$. 
    This shows that \eqref{emb} is satisfied. 
\end{proof}

\begin{corollary}
  Every countable pca $\A$ embeds in $\K_2$.
\end{corollary}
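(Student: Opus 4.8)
The plan is to deduce this immediately from Theorem~\ref{thm:count_to_K2}. That theorem produces an embedding $\A \hookrightarrow \K_2^X \hookrightarrow \K_2$ as soon as $\A$ carries a strongly $X$-effectively pca-valued partial numbering for some $X \subseteq \omega$, so the only thing to do is to manufacture such a numbering for an arbitrary countable pca.

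First I would make the numbering injective, which is exactly where countability is used. Fix a set $D \subseteq \omega$ and a bijection $\gamma : D \to A$, viewed as a partial numbering $\gamma : \omega \to \A$ with $\dom(\gamma) = D$; when $\A$ is infinite one may simply take $D = \omega$. Because $\gamma$ is injective, the application of $\A$ transports along $\gamma$ to a well-defined partial function $\psi$ on $\omega$: put $\psi(n, m) = \gamma^{-1}(\gamma(n) \cdot_{\A} \gamma(m))$ whenever $n, m \in \dom(\gamma)$ and $\gamma(n)\gamma(m)\terminates$, and leave $\psi(n, m)$ undefined otherwise. For all $n, m \in \dom(\gamma)$ with $\gamma(n)\gamma(m)\terminates$ we then have $\gamma(n)\gamma(m) = \gamma(\psi(n,m))$, so $\psi$ is a legitimate $\gamma$-representation of the application; and the \emph{strongly} clause is automatic, since if $\gamma(n)\gamma(m) = \gamma(k)\gamma(l)$ then $\psi(n, m)$ and $\psi(k, l)$ are both the unique $\gamma$-preimage of this common element, hence equal.

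It remains to supply an oracle $X$ relative to which $\psi$ is partial computable. Since $\psi$ is merely some partial binary function on $\omega$, I would take $X$ to encode its graph, e.g.\ $X = \compr{\tuple{n, \tuple{m, \ell}}}{n, m \in \dom(\gamma) \text{ and } \psi(n, m) = \ell}$. With $X$ as oracle, on input $(n, m)$ one searches through $\ell = 0, 1, 2, \dots$, querying whether $\tuple{n, \tuple{m, \ell}} \in X$; the first $\ell$ for which the answer is positive equals $\psi(n, m)$, while if $(n, m) \notin \dom(\psi)$ the search never halts — exactly the behaviour demanded of a partial $X$-computable function. Hence $\psi$ is partial $X$-computable, so $\gamma$ is $X$-effectively pca-valued, and by the previous paragraph strongly so. Theorem~\ref{thm:count_to_K2} now yields the embedding $\A \hookrightarrow \K_2^X \hookrightarrow \K_2$.

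There is no real obstacle here: the content is the observation that any partial function becomes partial computable relative to a suitable oracle, combined with the fact that choosing the numbering injective trivialises the \emph{strongly} condition. In particular no effectivity of $\A$ is assumed — the entire computational burden is shifted onto $X$, which is precisely why the target is $\K_2$ and not $\K_2^\eff$.
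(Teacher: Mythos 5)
Your proposal is correct and follows essentially the same route as the paper: take an injective (bijective onto $\A$) partial numbering, so that the induced $\psi$ automatically strongly $\gamma$-represents application, let $X$ be the graph of $\psi$ to make it partial $X$-computable, and invoke Theorem~\ref{thm:count_to_K2}. The only cosmetic difference is that the paper notes every pca is infinite and takes $\gamma$ total on $\omega$, whereas you allow a proper domain $D$; both work.
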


\begin{proof}
  Recall that by definition, every pca is non-empty.
  It is easy to show that every pca is infinite (cf.~van Oosten~\cite{vanOosten}).
  Take a bijective function $\gamma : \omega \to \A$ and let $\psi(n, m) = k$ if $\gamma(n)\gamma(m) = \gamma(k)$.
  Let $X$ be the graph of $\psi$.
  By Theorem~\ref{thm:count_to_K2}, $\A$ embeds in $\K_2^X$ and hence in $\K_2$.
\end{proof}

In order to better understand embeddings between $\K_2^X$ and $\K_2^Y$, we look at embeddings out of $\K_2^X$ into
an arbitrary partially numbered pca.

\begin{theorem}
  \label{thm:K2_decider}
  Let $X, Y \subseteq \omega$, $\A$ a pca, and $\gamma : \omega \to \A$ a $Y\!$-effectively pca-valued partial numbering.
  Given an embedding $f : \K_2^X \hookrightarrow \A$ and any $g_\bot, g_\top
  \in \K_2^X$, there is a $Y\!$-computable $(f(g_\bot), f(g_\top))$-decider $d$ for $X$.
\end{theorem}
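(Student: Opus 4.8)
The plan is to mimic the structure of the proof of Theorem~\ref{thm:K1_decider}, transporting the ``head/tail'' construction into $\K_2^X$. Fix $g_\bot, g_\top \in \K_2^X$ and let $x_\bot, x_\top \in \K_2^X$ be such that $f(x_\bot) = f(g_\bot)$... wait, more precisely I will simply write $a_\bot = f(g_\bot)$ and $a_\top = f(g_\top)$ and produce a decider landing on these. The first step is to represent $X$ inside $\K_2^X$: since $X$ is $X$-computable, its characteristic function is an $X$-computable total function, hence an element of $\K_2^X$; call a fixed such element $\chi_X$ (a suitable coding of the characteristic function as an element of $\omega^\omega$ with the application convention of~\eqref{alt}). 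Because $\K_2^X$ is a pca containing all $X$-computable functions, it contains elements $t$ (tail) and $h$ (head) with the property that $t \cdot \chi_X$ shifts the argument, so that $\overline n\, t\, \chi_X$ computes $m \mapsto \chi_X(m+n)$ up to the appropriate coding, and $h$ applied to such a function reads off its value at $0$ and returns $g_\bot$ or $g_\top$ accordingly. Here I must be a little careful about the coding: in $\K_2$ an element is a function in $\omega^\omega$, and ``the value at $0$'' of the shifted characteristic function has to be extracted by the machine computing $h$; but this is a routine $X$-computable manipulation, so such $t, h \in \K_2^X$ exist, and the term $h(\overline n\, t\, \chi_X)$ evaluates in $\K_2^X$ to $g_\bot$ if $n \notin X$ and to $g_\top$ if $n \in X$, by induction on $n$ exactly as in Theorem~\ref{thm:K1_decider}.

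The second step is to push this term through the embedding $f$. Since $f$ preserves application, $f\big(h(\overline n\, t\, \chi_X)\big) = f(h)\cdot\big(f(\overline n)\cdot f(t)\cdot f(\chi_X)\big)$, and $f(\overline n)$ is obtained from $f(\mathrm{s})$ and $f(\mathrm{k})$ since $\overline n$ is an $\mathrm{s},\mathrm{k}$-term. Now fix $\gamma$-codes $c_h, c_t, c_\chi, c_{\mathrm s}, c_{\mathrm k}$ for $f(h), f(t), f(\chi_X), f(\mathrm{s}), f(\mathrm{k})$ respectively; these are finitely many fixed numbers. Using $\psi$, the $Y$-computable $\gamma$-representation of application in $\A$, we build $Y$-computably a $\gamma$-code $d(n)$ for $f\big(h(\overline n\, t\, \chi_X)\big)$, simply by applying $\psi$ along the parse tree of the term $h(\overline n\, t\, \chi_X)$ with the leaves replaced by the fixed codes above. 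This $d$ is total because the term terminates in $\K_2^X$ and $f$ preserves termination, and $d$ is $Y$-computable because $\psi$ is partial $Y$-computable and all calls to $\psi$ along the way converge. By construction $\gamma(d(n)) = f(g_\bot) = a_\bot$ when $n \notin X$ and $\gamma(d(n)) = f(g_\top) = a_\top$ when $n \in X$, so $d$ is a $Y$-computable $(f(g_\bot), f(g_\top))$-decider for $X$, as required.

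The main obstacle, compared to the $\K_1$ case, is purely the bookkeeping of the coding of $\K_2^X$: one has to exhibit the elements $\chi_X$, $t$, and $h$ of $\omega^\omega$ explicitly enough to see that (a) they are $X$-computable, hence genuinely live in $\K_2^X$, and (b) the term $h(\overline n\, t\, \chi_X)$ really does the arithmetic of evaluating $\chi_X$ at $n$. The cleanest way to handle this is to appeal to combinatory completeness of $\K_2^X$: since shifting and value-reading at $0$ are $X$-computable operations, there are $\K_2^X$-elements realizing them, and then $t$ and $h$ are assembled from these by the standard pca machinery, exactly as partial computable functions are represented in $\K_1$. Everything downstream of the existence of $t, h, \chi_X$ is verbatim the argument of Theorem~\ref{thm:K1_decider}, so I would present the proof by first noting the $\K_2^X$ analogues of $e$, $t$, $h$ exist and then invoking the construction of $d$ already described there, only indicating the one change that $\chi_X$ replaces the code $e$ of the characteristic function.
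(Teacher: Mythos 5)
Your proposal is correct and follows essentially the same route as the paper's proof: the paper likewise takes $\chi_X\in\K_2^X$, defines $t$ by $(tx)(n)=x(n+1)$ and $h$ by reading off $x(0)$ and returning $g_\bot$ or $g_\top$, and then uses the fixed $\gamma$-codes of $f(h)$, $f(t)$, $f(\chi_X)$, $f(\mathrm{s})$, $f(\mathrm{k})$ together with $\psi$ to produce $d(n)$ as a $\gamma$-code of $f(h(\overline{n}t\chi_X))$. The coding bookkeeping you flag is handled in the paper exactly as you suggest, by writing down $t$ and $h$ as explicit $X$-computable operations on elements of $\omega^\omega$.
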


Note that the existence of an embedding $f' : \K_1^X \hookrightarrow \K_2^X$ and Theorem~\ref{thm:K1_decider}
are together not enough to conclude this, since we may have that $g_\bot$ or $g_\top$ falls outside $\ran(f')$.

\begin{proof}
  Define $t, h \in \K_2^X$ by
  \begin{align*}
    (tx)(n) &= x(n+1)\\
    (hx)(n) &= \begin{cases}
      g_\bot(n) & \text{if $x(0) = 0$}\\
      g_\top(n) & \text{if $x(0) = 1$}\\
      \diverges & \text{otherwise}
    \end{cases}.
  \end{align*}

  Note that $\chi_X$, the characteristic function of $X$, is itself an element of $\K_2^X$.
  We can thus construct a $Y\!$-computable $d$ such that for all $n \in \omega$,
  $\gamma(d(n)) = f(h(\overline{n}t\chi_X))$.
  We see that $h(\overline{n}t\chi_X) = g_\bot$ if $n \not\in X$ and $h(\overline{n}t\chi_X) = g_\top$ otherwise,
  as desired.
\end{proof}

As in the case of the $\lambda$-calculus, this immediately gives us a result on the embeddability
of $\K_2^X$ in $\K_1^Y$.
However, we will see in Theorem~\ref{thm:K2X_to_K1Y} that a more complicated 
construction allows for a stronger result.

\begin{corollary}
  If there exists an embedding $\K_2^X \hookrightarrow \K_1^Y$ then $X \le_T Y$.
\end{corollary}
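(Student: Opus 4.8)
The plan is to combine Theorem~\ref{thm:K2_decider} with Theorem~\ref{thm:decider_with_ce}, exactly in the way Corollary~\ref{cor:K1_to_A} was derived from Theorems~\ref{thm:K1_decider} and~\ref{thm:decider_with_ce}. So suppose $f : \K_2^X \hookrightarrow \K_1^Y$ is an embedding. First I would equip $\K_1^Y$ with the identity numbering $\gamma = \mathrm{id}$, which as noted in the section on Kleene's first model is $Y\!$-effectively pca-valued (indeed strongly so), since application in $\K_1^Y$ is literally $n \cdot m = \Phi_n^Y(m)$, a partial $Y$-computable function.

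Next, I would pick any two distinct elements $g_\bot \neq g_\top$ in $\K_2^X$ — for instance two distinct computable functions, which certainly exist since $\K_2^X$ contains all $X$-computable functions and in particular is infinite. Applying Theorem~\ref{thm:K2_decider} with $\A = \K_1^Y$, $\gamma = \mathrm{id}$, and these choices of $g_\bot, g_\top$ yields a $Y\!$-computable $(f(g_\bot), f(g_\top))$-decider $d$ for $X$. Since $f$ is injective and $g_\bot \neq g_\top$, the two elements $a_\bot := f(g_\bot)$ and $a_\top := f(g_\top)$ of $\K_1^Y$ are distinct.

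Finally, I would invoke Theorem~\ref{thm:decider_with_ce}: the numbering $\gamma = \mathrm{id}$ on $\K_1^Y$ has $Y\!$-c.e.\ inequivalence, because inequivalence with respect to the identity numbering is just inequality of natural numbers, which is decidable and hence trivially $Y\!$-c.e. Thus we have a pca $\A = \K_1^Y$, distinct elements $a_\bot \neq a_\top$, a partial numbering with $Y\!$-c.e.\ inequivalence, and a $Y\!$-computable $(a_\bot, a_\top)$-decider for $X$, so Theorem~\ref{thm:decider_with_ce} gives $X \le_T Y$, as required. There is no real obstacle here — the statement is a straightforward corollary, the only mild point to keep track of being that one must genuinely choose $g_\bot \neq g_\top$ so that their $f$-images are distinct, which is needed to apply Theorem~\ref{thm:decider_with_ce}.
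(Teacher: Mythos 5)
Your proof is correct and follows exactly the route the paper takes: the paper's proof is the one-line observation that the corollary follows from Theorems~\ref{thm:K2_decider} and~\ref{thm:decider_with_ce}, and you have simply (and correctly) filled in the details, including the mild but necessary point that $g_\bot \neq g_\top$ must be chosen so that injectivity of $f$ yields distinct images.
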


\begin{proof}
  This follows directly from Theorems~\ref{thm:K2_decider} and~\ref{thm:decider_with_ce}.
\end{proof}

In order to also say something about embeddings $\K_2^X \hookrightarrow \K_2^Y$, we require a lemma on the
complexity of $\nsim_\gamma$.

\begin{lemma}
  \label{lem:K2_numbering}
  Let $X \subseteq \omega$.
  The numbering $\gamma(n) = \Phi^X_n$, restricted to those $n \in \omega$ such that $\gamma(n)$ is total,
  is an effectively pca-valued partial numbering, which we will denote by $\gamma_t$.
  This numbering has $X\!$-c.e.\ inequivalence.
\end{lemma}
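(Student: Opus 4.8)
The plan is to verify the two claims separately. First, to see that $\gamma_t$ is an effectively pca-valued partial numbering: surjectivity onto the total $X$-computable functions is immediate, since every $g \in \K_2^X$ is $\Phi^X_e$ for some $e$, and such an $e$ lies in $\dom(\gamma_t)$. For the effectivity, recall from~\eqref{alt} that application in $\K_2$ is given by $g \cdot h = \Phi^{g \oplus h}_{g(0)}$. When $g = \Phi^X_n$ and $h = \Phi^X_m$ are both total, the oracle $g \oplus h$ is itself $X$-computable uniformly in $n, m$; composing with the Turing functional $\Phi_{g(0)}$, one obtains from the S-m-n theorem a partial computable function $\psi(n, m)$ that produces a code for $\Phi^X_n \cdot \Phi^X_m$ whenever that application is defined. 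The key point is that no oracle beyond $X$ is needed to \emph{name} the result, since the result is computed relative to $g \oplus h \le_T X$; hence $\psi$ is in fact (plain) computable as a map $\omega \to \omega$, so $\gamma_t$ is effectively pca-valued.

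Second, and this is the main obstacle, we must show $\gamma_t$ has $X$-c.e.\ inequivalence, i.e.\ that there is an $X$-c.e.\ relation $R'$ on $\omega^2$ with $R' \restr \dom(\gamma_t)^2 = {\nsim_{\gamma_t}}$. The natural candidate is
\[
  R' = \bigset{(n, m) \;\big|\; \ex i.\ \Phi^X_n(i)\terminates \wedge \Phi^X_m(i)\terminates \wedge \Phi^X_n(i) \neq \Phi^X_m(i)}.
\]
This relation is $X$-c.e.: using $X$ as an oracle, dovetail the computations of $\Phi^X_n$ and $\Phi^X_m$ on all inputs and halt if a disagreement at some common point of convergence is found. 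It remains to check that $R'$ restricted to $\dom(\gamma_t)^2$ is exactly $\nsim_{\gamma_t}$. If $n, m \in \dom(\gamma_t)$, then $\Phi^X_n$ and $\Phi^X_m$ are total, so $\gamma_t(n) \neq \gamma_t(m)$ iff they differ at some input $i$, iff $(n, m) \in R'$ — here totality is what makes "differ somewhere" witnessable by a single point. So the restriction agrees, and $\gamma_t$ has $X$-c.e.\ inequivalence.

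The only subtlety worth flagging is that $R'$ is \emph{not} $X$-c.e.\ as a description of inequivalence on all of $\omega^2$ (outside $\dom(\gamma_t)$ one of the functions may be partial and the two codes could name different partial functions without ever disagreeing on a common domain point); but the definition of $X$-c.e.\ with respect to $\gamma$ only asks for agreement on $\dom(\gamma_t)^2$, so this causes no problem. This completes the proof.
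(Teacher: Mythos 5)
Your proof is correct and follows essentially the same route as the paper: the S-$m$-$n$ theorem yields a computable code transformation representing application, and inequivalence is witnessed by an $X$-c.e.\ search for a common point of convergence where the two functions disagree, which is exhaustive precisely because elements of $\dom(\gamma_t)$ are total. Your closing remark about why this relation need only match $\nsim_{\gamma_t}$ on $\dom(\gamma_t)^2$ is a correct and worthwhile observation, implicit in the paper's definition of $X$-c.e.\ relations with respect to a partial numbering.
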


\begin{proof}
  Let $n, m \in \omega$ such that $\gamma_t(n)\terminates, \gamma_t(m)\terminates$.
  By the $S$-$m$-$n$ theorem, there is a computable function $g$ such that 
  \[
    \Phi^X_{g(n, m)}(k) = (\Phi^X_n \cdot \Phi^X_m)(k),
  \]
  since the right-hand side is $X$-computable.
  This $g$ $\gamma$-represents application in $\K_2^X$,
  showing that $\gamma_t$ is indeed an effectively pca-valued partial numbering.
  
  Let $R$ be the binary relation on $\omega$ defined by
  \[
    nRm \Leftrightarrow \exists k.\, \exists s.\, \Phi^X_{n,s}(k) \terminates \wedge \Phi^X_{m,s}(k)\terminates \wedge \Phi^X_{n,s}(k) \neq \Phi^X_{m,s}(k).
  \]

  This is $X$-c.e., and $R \restr \dom(\gamma)^2 = \mathbin{\nsim_{\gamma_t}}$,
  hence $\gamma_t$ has $X$-c.e.\ inequivalence.
\end{proof}

\begin{theorem}
  \label{thm:K2_embed_turing}
  Let $X, Y \subseteq \omega$.  There exists an embedding $\K^X_2 \to \K^Y_2$
  if and only if $X \le_T Y$.
\end{theorem}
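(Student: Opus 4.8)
The plan is to treat the two directions separately; both follow quickly from the machinery already set up.

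For the implication $X \le_T Y \Rightarrow \K_2^X \hookrightarrow \K_2^Y$, I would simply verify that the inclusion is an embedding. If $X \le_T Y$ then every $X$-computable function is $Y$-computable, so $\K_2^X \subseteq \K_2^Y$ as sets. The application $g \cdot h = \Phi^{g \oplus h}_{g(0)}$ makes no reference to the oracle parameter of the model: for $g, h \in \K_2^X$ the function $\Phi^{g \oplus h}_{g(0)}$ is $X$-computable, hence it lies in $\K_2^X$ precisely when it is total, and the same holds with $Y$ in place of $X$. So $g \cdot h$ is defined in $\K_2^X$ if and only if it is defined in $\K_2^Y$, with the same value, and the inclusion $\K_2^X \hookrightarrow \K_2^Y$ is an embedding.

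For the converse, suppose $f : \K_2^X \hookrightarrow \K_2^Y$ is an embedding. I would apply Theorems~\ref{thm:K2_decider} and~\ref{thm:decider_with_ce} to the pca $\A = \K_2^Y$ equipped with the partial numbering $\gamma_t$ of Lemma~\ref{lem:K2_numbering}, taken relative to $Y$: that is, $\gamma_t(n) = \Phi^Y_n$ restricted to those $n$ for which $\Phi^Y_n$ is total. By that lemma $\gamma_t$ is effectively pca-valued, hence in particular $Y\!$-effectively pca-valued, and it has $Y\!$-c.e.\ inequivalence. Now fix two distinct computable functions $g_\bot, g_\top \in \K_2^X$ (say the constant $0$ and constant $1$ functions); since $f$ is injective, $f(g_\bot) \neq f(g_\top)$ in $\K_2^Y$. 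Theorem~\ref{thm:K2_decider} then produces a $Y\!$-computable $(f(g_\bot), f(g_\top))$-decider $d$ for $X$ with respect to $\gamma_t$, and Theorem~\ref{thm:decider_with_ce}, applied with $a_\bot = f(g_\bot)$, $a_\top = f(g_\top)$, numbering $\gamma_t$, and decider $d$, yields $X \le_T Y$.

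I do not expect any genuine obstacle in this statement itself: the content has been pushed into Theorem~\ref{thm:K2_decider} (whose proof codes $\chi_X$ into $\K_2^X$ and uses the head and tail terms $h, t$) and into Lemma~\ref{lem:K2_numbering} (the numbering $\gamma_t$ and its $Y\!$-c.e.\ inequivalence). The only point that needs care is choosing a numbering of $\K_2^Y$ that simultaneously meets the hypotheses of both theorems, which is exactly $\gamma_t$: restricting $\Phi^Y_n$ to the total functions is what makes application computably representable, while using the $s$-step approximations of the relevant functionals is what makes inequivalence $Y\!$-c.e.
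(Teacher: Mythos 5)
Your proposal is correct and follows essentially the same route as the paper: the inclusion handles the easy direction, and the converse combines Theorem~\ref{thm:K2_decider} with Theorem~\ref{thm:decider_with_ce} via the numbering $\gamma_t$ of Lemma~\ref{lem:K2_numbering}. Your explicit choice of distinct $g_\bot, g_\top$ and the observation that injectivity of $f$ gives $f(g_\bot) \neq f(g_\top)$ (needed to invoke Theorem~\ref{thm:decider_with_ce}) is a small point the paper leaves implicit, but the argument is the same.
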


\begin{proof}
  If $X \le_T Y$ then every $X$-computable function is in fact $Y\!$-computable and
  hence $\K^X_2$ is a sub-pca of $\K^Y_2$ and the inclusion is an embedding.

  Conversely, let $d : \omega \to \omega$ be the decider constructed in Theorem~\ref{thm:K2_decider}.
  By Lemma~\ref{lem:K2_numbering}, $\K^Y_2$ has a $Y\!$-effectively pca-valued partial numbering $\gamma_t$,
  and hence by Theorem~\ref{thm:decider_with_ce}, since inequivalence in $\K_2^Y$ is $Y\!$-c.e.,
  we have $X \le_T Y$.
\end{proof}

\begin{theorem}
  \label{thm:K1_to_K2_optimal}
  Let $X, Y \subseteq \omega$.  There exists an embedding $\K_1^X \to \K_2^Y$ if and only if
  $X \le_T Y$.
\end{theorem}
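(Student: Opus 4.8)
The plan is to split into the two directions and reuse the machinery already developed. For the "if" direction, suppose $X \le_T Y$. Then by Theorem~\ref{thm_ST:2} there is an embedding $\K_1^X \hookrightarrow \K_2^X$, and since every $X$-computable function is $Y$-computable, $\K_2^X$ is a sub-pca of $\K_2^Y$, so the inclusion composed with that embedding gives $\K_1^X \hookrightarrow \K_2^Y$. Alternatively, and perhaps more cleanly in our present framework: the identity numbering on $\K_1^X$ is strongly $X$-effectively pca-valued, hence strongly $Y$-effectively pca-valued once $X \le_T Y$, so Theorem~\ref{thm:count_to_K2} directly yields an embedding $\K_1^X \hookrightarrow \K_2^Y$. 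I would present this second route since it matches the style of the surrounding results.

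For the "only if" direction, suppose $f : \K_1^X \hookrightarrow \K_2^Y$. Here I would invoke the decider apparatus. Take $\A = \K_2^Y$ with the partial numbering $\gamma_t$ of Lemma~\ref{lem:K2_numbering}, which is $Y$-effectively pca-valued and has $Y$-c.e.\ inequivalence. By Theorem~\ref{thm:K1_decider}, for any choice of $a_\bot, a_\top \in \ran(f)$ there is a $Y$-computable $(a_\bot, a_\top)$-decider for $X$; picking $a_\bot \neq a_\top$ (possible since $f$ is injective and pcas have at least two elements), Theorem~\ref{thm:decider_with_ce} then gives $X \le_T Y$. Equivalently this is just Corollary~\ref{cor:K1_to_A} applied to $\A = \K_2^Y$ with numbering $\gamma_t$.

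The only point requiring a little care is the compatibility of the numbering $\gamma_t$ with the embedding $f$: the image $\ran(f) \subseteq \K_2^Y$ consists of total functions, so $\gamma_t$ (whose domain is exactly the total functions among $\Phi_n^Y$) does surject onto a set containing $\ran(f)$, and the elements $a_\bot, a_\top$ chosen from $\ran(f)$ indeed lie in $\ran(\gamma_t)$ with $Y$-codes we can name. This is the one place where one should double-check that the hypotheses of Theorem~\ref{thm:K1_decider} and Theorem~\ref{thm:decider_with_ce} are literally met — but it is routine. I do not anticipate a genuine obstacle here; the result is essentially a corollary of Theorems~\ref{thm:count_to_K2}, \ref{thm:K1_decider}, and~\ref{thm:decider_with_ce} (together with Lemma~\ref{lem:K2_numbering}), and the proof will be short.
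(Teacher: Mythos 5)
Your proposal is correct and follows essentially the same route as the paper: the ``if'' direction via Theorem~\ref{thm_ST:2} composed with the inclusion $\K_2^X \hookrightarrow \K_2^Y$, and the ``only if'' direction via Lemma~\ref{lem:K2_numbering} together with Corollary~\ref{cor:K1_to_A}. Your alternative phrasing of the ``if'' direction through Theorem~\ref{thm:count_to_K2} and your remark on checking that $a_\bot, a_\top$ lie in $\ran(\gamma_t)$ are both sound but do not change the substance of the argument.
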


\begin{proof}
  The ``if'' direction is simply Theorem~\ref{thm_ST:2} from Shafer and Terwijn~\cite{ShaferTerwijn},
  together with the fact that if $X \le_T Y$ then $\K_2^X \hookrightarrow \K_2^Y$ via the inclusion.

  By Lemma~\ref{lem:K2_numbering}, $\K^Y_2$ has a $Y\!$-effectively pca-valued partial numbering $\gamma$
  which has $Y\!$-c.e.\ inequivalence.
  The result follows by Corollary~\ref{cor:K1_to_A}.
\end{proof}

Let us return to the question of when there is an embedding $\K_2^X \hookrightarrow \K_1^Y$.

\begin{lemma}
  \label{lem:K2_ce_decider}
  Let $X, Y \subseteq \omega$ and let $Z$ be an $X\!$-c.e.\ set.
  Let $\A$ be a pca and $\gamma : \omega \to \A$ be a $Y\!$-efficiently pca-valued partial numbering.
  If there exists an embedding $f : \K_2^X \hookrightarrow \A$, then there exists an $a \in \A$
  and a $Y\!$-computable $(\set{a}, \A - \set{a})$-decider $d$ for $Z$.
\end{lemma}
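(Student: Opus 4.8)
The plan is to adapt the decider construction from Theorem~\ref{thm:K2_decider}, but instead of feeding in the (total) characteristic function $\chi_X$, we feed in a partial $X$-computable function whose domain encodes the $X$-c.e.\ set $Z$. Since $Z$ is $X$-c.e., fix a partial $X$-computable function $\theta$ with $\dom(\theta) = Z$; we may take $\theta$ to be the function that on input $n$ searches for a witness to $n \in Z$ and then outputs $0$. This $\theta$ is an element of $\B^X$, but the subtlety is that $\theta$ need not be total, so it is \emph{not} an element of $\K_2^X$. Here is where we must be careful: we want to stay inside $\K_2^X$ so that we can apply the embedding $f$. The fix is to encode $Z$ not via the domain of a partial function but via a total function: let $\chi \in \K_2^X$ be the function with $\chi(n) = 1$ if $n \in Z$ and $\chi(n)\diverges$ is replaced by a total search that never terminates — no, that fails totality again. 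The right move is instead: since $Z$ is $X$-c.e., the function $n \mapsto (\mu s.\, n \in Z_s)$ is partial $X$-computable with domain $Z$, and we lift the \emph{whole construction} into $\B^X$ if needed; but more cleanly, we keep $g_\bot, g_\top \in \K_2^X$ and define $h$ so that $hx$ is total exactly when $x(0)\diverges$, giving $g_\bot$, and equals $g_\top$ when $x(0)\terminates$. Concretely, set $g_\top$ to be any fixed computable element and $g_\bot$ to be another, with $g_\bot \neq g_\top$, and arrange $h$ so that $(hx)(n)$ runs the computation of $x(0)$ for $n$ steps: if it has halted, output $g_\top(n)$, else output $g_\bot(n)$. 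Then $hx$ is total for \emph{every} $x$, so $h \in \K_2^X$, and $hx = g_\top$ if $x(0)\terminates$ while $hx = g_\bot$ if $x(0)\diverges$.

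With that in hand the construction mirrors Theorem~\ref{thm:K2_decider}. Take $\xi \in \B^X$ — or rather, we need the input object to be in $\K_2^X$. So instead let $\rho$ be the \emph{total} $X$-computable function defined by $\rho(n) = 1$ if $n \in Z_n$ (the $n$-step approximation) and $\rho(n) = 0$ otherwise; this is total and $X$-computable, hence $\rho \in \K_2^X$, but it does not directly decide $Z$. To recover $Z$ we use the tail combinator $t$ from Theorem~\ref{thm:K2_decider} together with a combinator that searches: define $H \in \K_2^X$ so that $H x$ is the function which on input $n$ searches for the least $s$ with $x(s) = 1$ and outputs $g_\top(n)$ if found, and otherwise (diverging search) — which again breaks totality. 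The cleanest resolution, which I expect to use, is to observe that Lemma~\ref{lem:K2_ce_decider} is really about $\B^X$-style reasoning transported through $f$: we do not need the intermediate objects to be total, only $h(\overline{n}t\,\sigma)$ for the \emph{specific} inputs, where $\sigma \in \K_2^X$ is a total function listing $Z$ in the sense that $\sigma = \lambda n.\,$``$1$ iff $n$ has entered $Z$ by stage (something depending on the argument)''. Using the pairing available in any pca, let $\sigma(\langle n, s\rangle) = 1$ iff $n \in Z_s$; this is total $X$-computable. Then build a term, using combinators $t', h' \in \K_2^X$, which on the relevant inputs computes $g_\top$ when $\exists s\, (n \in Z_s)$ and $g_\bot$ otherwise, with \emph{every} intermediate value total because the search is bounded at each argument by that argument itself.

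Once we have a term $\tau_n \in \K_2^X$, built uniformly and computably (in $n$) from $\mathrm{s}$, $\mathrm{k}$, the combinators above, and $\overline{n}$, with $\tau_n = g_\top$ if $n \in Z$ and $\tau_n = g_\bot$ if $n \notin Z$, we apply the embedding $f$. As in the proof of Theorem~\ref{thm:K1_decider}, since $f$ respects application and $\tau_n$ is a term over finitely many fixed elements, we fix $\gamma$-codes for $f(\mathrm{s})$, $f(\mathrm{k})$, $f(t')$, $f(h')$ and for $f$ of the other fixed building blocks, and use $\psi$ (the $Y$-representation of application in $\A$) to compute a $\gamma$-code $d(n)$ of $f(\tau_n)$; this is $Y$-computable and total since $\tau_n\terminates$. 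Setting $a = f(g_\top)$, we get $\gamma(d(n)) = f(g_\top) = a$ when $n \in Z$ and $\gamma(d(n)) = f(g_\bot) \neq a$ (by injectivity of $f$, as $g_\bot \neq g_\top$) when $n \notin Z$, so $d$ is a $Y$-computable $(\{a\}, \A \setminus \{a\})$-decider for $Z$, as required.

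The main obstacle, and the point that needs genuine care in writing the full proof, is the totality issue flagged above: the naive construction wants to run an unbounded search on $x(0)$, which produces a partial function and hence an object of $\B^X$ rather than $\K_2^X$, so $f$ (an embedding of $\K_2^X$) could not be applied to it. The resolution — encoding the $X$-c.e.\ set $Z$ by a \emph{total} $X$-computable function via the pairing $\langle n, s\rangle \mapsto [n \in Z_s]$, and arranging all combinators so that every intermediate computation is total (each argument's search bounded by that argument) while the final selected value still depends only on whether the unbounded search succeeds — is the crux, and I would present it explicitly rather than gesture at it. Everything after that is the now-standard ``push the term through $f$ and read off $\gamma$-codes via $\psi$'' argument.
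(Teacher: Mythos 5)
Your overall strategy---encode the stagewise approximation $Z_s$ of the $X$-c.e.\ set $Z$ into a \emph{total} $X$-computable object so as to stay inside $\K_2^X$, then push the resulting term through $f$ and read off $\gamma$-codes via $\psi$---is exactly the paper's, and your identification of the totality obstruction as the crux is correct. But your final accounting contains a genuine error: you claim that the term $\tau_n$ equals the fixed function $g_\top$ whenever $n \in Z$. With the bounded search you describe (the value at argument $m$ decided by whether $n \in Z_m$), what you actually get for $n \in Z$ is a \emph{hybrid} function, equal to $g_\bot$ on arguments below the stage at which $n$ enters $Z$ and to $g_\top$ from that stage on; this is a different element of $\K_2^X$ for each entry stage, not the single element $g_\top$. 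Indeed the stronger claim is provably unattainable: a uniformly constructed two-point $(\set{f(g_\bot)}, \set{f(g_\top)})$-decider for every $X$-c.e.\ set, applied to the identity embedding $\K_2^X \hookrightarrow \K_2^X$ together with the numbering $\gamma_t$ of Lemma~\ref{lem:K2_numbering} and Theorem~\ref{thm:decider_with_ce}, would yield $X' \le_T X$. This is precisely why the lemma pins down only one side of the decider.

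The repair is the paper's move, and it forces you to swap which side is the singleton. Take $g_\bot$ and $g_\top$ to be, say, the constant $0$ and constant $1$ functions, so that the hybrid can never collapse back to $g_\bot$, and set $a = f(g_\bot)$. Then $n \notin Z$ gives $\gamma(d(n)) = a$, while $n \in Z$ gives $\gamma(d(n)) = f(\text{some hybrid}) \neq a$ by injectivity of $f$; that is, the singleton sits on the $n \notin Z$ side, matching the statement $(\set{a}, \A - \set{a})$. As written, your last paragraph puts the singleton on the $n \in Z$ side and asserts an identity ($\tau_n = g_\top$) that your own construction does not deliver. The remainder---building the inputs $h_n$ (or $\overline{n}$) from finitely many fixed elements of $\K_2^X$, fixing $\gamma$-codes of their images under $f$, and computing $d(n)$ with $\psi$---is fine and matches the paper.
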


\begin{proof}
  To begin, we encode the set $X$ into an element $g$ of $\K_2^X$.
  We use $g(0)$ to represent a code $e$, which we will define shortly.
  Define
  \begin{align*}
    g(0) &= e\\
    g(n+1) &= \begin{cases}
      0 & \text{if $n \not\in X$}\\
      1 & \text{if $n \in X$}
    \end{cases}.
  \end{align*}

  Since $Z$ is $X$-c.e., it has an $s$-step $X$-computable approximation $Z_s$.
  We define $e$ to be a code such that for all $h \in \K_2^X$,
  \[
    \Phi_e^{g \oplus h}(s) = \begin{cases}
      0 & \text{if $h(0) \not\in Z_s$}\\
      1 & \text{if $h(0) \in Z_s$}\\
    \end{cases}.
  \] 

  The family of functions $h_n$, given by $h_n(0) = n$ and $h_n(k+1) = 0$,
  can be uniformly constructed out of finitely many elements of $\K_2^X$;
  hence we can $Y\!$-computably find a $\gamma$-code for every $f(h_n)$.

  Let $d(n)$ be a $\gamma$-code of $f(g \cdot h_n)$.
  Note that $g \cdot h_n$ is the constant zero function exactly when $n \not \in Z$.
  Hence taking $a$ to be the image of the constant zero function under $f$, we get exactly the desired result.
\end{proof}

\begin{theorem}
  \label{thm:K2X_to_K1Y}
  If there is an embedding $\K_2^X \hookrightarrow \K_1^Y$ then $X' \le_T Y$.
\end{theorem}

\begin{proof}
  By Lemma~\ref{lem:K2_ce_decider} there is a $Y\!$-computable $d$ and a $c \in \omega$ such that $d(n) = c$
  if and only if $n \not \in X'$, which suffices.
\end{proof}

Let us now consider the case of $\B$.
Note that there is no need to generalize Theorem~\ref{thm:count_to_K2} since there is an
embedding $\K_2^X \hookrightarrow \B^X$ given by the inclusion, and so the corresponding
result is given by composition.
We therefore focus on embeddings out of $\B^X$.

\begin{theorem}
  \label{thm:B_decider}
  Let $X, Y \subseteq \omega$, $\A$ a pca, and $\gamma : \omega \to \A$ a $Y\!$-effectively pca-valued partial numbering.
  Given an embedding $f : \B^X \hookrightarrow \A$ and any $\theta_\bot, \theta_\top
  \in \B^X$, there is a $Y\!$-computable $(f(\theta_\bot), f(\theta_\top))$-decider $d$ for $X$.
\end{theorem}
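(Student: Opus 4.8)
The plan is to mirror the structure of the proof of Theorem~\ref{thm:K2_decider}, simply replacing total $X$-computable functions by partial $X$-computable functions throughout. The key observation is that nothing in the earlier argument actually required totality of the elements $t$, $h$, or $\chi_X$ beyond what is needed to make the relevant terms converge; the elements $\theta_\bot,\theta_\top$ are now allowed to be partial, but that only affects what $f(\theta_\bot)$ and $f(\theta_\top)$ are, not the mechanics of the decider.

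First I would define $t,h\in\B^X$ by exactly the same clauses as in Theorem~\ref{thm:K2_decider}:
\[
  (tx)(n) = x(n+1), \qquad
  (hx)(n) = \begin{cases}
    \theta_\bot(n) & \text{if $x(0)=0$}\\
    \theta_\top(n) & \text{if $x(0)=1$}\\
    \diverges & \text{otherwise}
  \end{cases}.
\]
Both are partial $X$-computable, hence genuine elements of $\B^X$; note that $h$ now produces a \emph{partial} function when applied, which is exactly what $\B^X$ allows, so the application $h x$ is always defined as an element of $\B^X$. Next I would observe that $\chi_X$, the characteristic function of $X$, is total and $X$-computable, hence lies in $\B^X$. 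For each $n$, the term $\overline{n}\,t\,\chi_X$ shifts $\chi_X$ by $n$, so $(\overline{n}\,t\,\chi_X)(0)=\chi_X(n)$, and therefore $h(\overline{n}\,t\,\chi_X)$ equals $\theta_\bot$ when $n\notin X$ and $\theta_\top$ when $n\in X$ — this is where membership in $X$ gets decided, and here the third (diverging) case of $h$ never arises because $\chi_X$ is $(0,1)$-valued.

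Then I would invoke the embedding $f:\B^X\hookrightarrow\A$ and the $Y\!$-effectively pca-valued numbering $\gamma$ exactly as in the earlier proofs: since $\overline{n}$, $t$, $h$, $\chi_X$, $\mathrm{s}$, $\mathrm{k}$ are built from finitely many fixed elements, fixing $\gamma$-codes of their images under $f$ and using $\psi$ (the $Y\!$-computable $\gamma$-representation of application in $\A$) lets us construct $Y\!$-computably a $\gamma$-code $d(n)$ for $f(h(\overline{n}\,t\,\chi_X))$; the term converges for every $n$, so $d$ is total. Finally, $\gamma(d(n)) = f(\theta_\bot)$ when $n\notin X$ and $\gamma(d(n)) = f(\theta_\top)$ when $n\in X$, so $d$ is a $Y\!$-computable $(f(\theta_\bot),f(\theta_\top))$-decider for $X$.

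I do not expect any real obstacle here: the proof is essentially a copy of Theorem~\ref{thm:K2_decider}. The only point requiring a moment's care is checking that the clauses defining $t$ and $h$ genuinely specify elements of $\B^X$ — i.e.\ that the operations $x\mapsto tx$ and $x\mapsto hx$ are partial $X$-computable as maps on (codes of) partial functions, and that $hx$ is a legitimate partial function even when $x(0)\notin\{0,1\}$ — but this is routine and is implicit in how $\B^X$ was set up. Everything else transfers verbatim, including the remark (parallel to the note after Theorem~\ref{thm:K2_decider}) that one cannot deduce this merely from $\K_1^X\hookrightarrow\B^X$ and Theorem~\ref{thm:K1_decider}, since $\theta_\bot$ or $\theta_\top$ may lie outside the range of that embedding.
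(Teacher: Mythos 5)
Your proposal is correct and matches the paper's approach exactly: the paper's own proof simply says the argument is identical to that of Theorem~\ref{thm:K2_decider}, generalizing $h$ to return (codes of) partial functions, which is precisely what you carry out. The only point needing care --- that $h x$ is a legitimate element of $\B^X$ even though $\theta_\bot,\theta_\top$ are partial --- is the one you flag and resolve correctly.
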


\begin{proof}
  The proof is identical to that of Theorem~\ref{thm:K2_decider};
  we can perform the same construction, generalizing $h$ to return codes of partial functions.
\end{proof}

The statement corresponding to Theorem~\ref{thm:K2_embed_turing} also holds for $\B$,
but the situation is more complicated.
The map $\gamma(n) = \Phi^X_n$ is an effectively pca-valued partial numbering, but
the relation $\nsim_\gamma$ for $\B^X$ is not $X$-c.e.: given codes of $\theta \subseteq \rho \in \B^X$
we cannot tell whether $\dom(\theta) = \dom(\rho)$.
In order to avoid this problem we must ensure that we find a decider whose image
consists of two incomparable elements.
Fortunately, this is always possible.

\begin{lemma}
  \label{lem:monotonic_not_chain}
  Let $\A$ and $\A'$ be pcas and let $f : \A \hookrightarrow \A'$ be an embedding.
  If $\mathbin{\le}$ is a partial order on $\A'$ under which the application is monotonic,
  then $\ran(f)$ contains incomparable elements.
\end{lemma}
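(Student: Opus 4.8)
The plan is to argue by contradiction: I assume $\ran(f)$ is totally ordered by $\le$ and derive that $f$ cannot be injective. The key is to route everything through the boolean combinators. Every pca contains $\mathrm{k}$ with $\mathrm{k}ab\terminates = a$ and an identity $\mathrm{i}$ with $\mathrm{i}a = a$; set $\mathrm{T} = \mathrm{k}$ and $\mathrm{F} = \mathrm{k}\mathrm{i}$. One checks (using that $\mathrm{k}ab\terminates$ forces $\mathrm{k}a\terminates$) that $\mathrm{T}ab\terminates = a$ and $\mathrm{F}ab = \mathrm{k}\mathrm{i}ab = \mathrm{i}b\terminates = b$ for all $a, b \in \A$. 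The point of this is that the application behaviour of $\mathrm{T}$ and $\mathrm{F}$ on $\A$ is totally defined, so it is faithfully reflected in $\ran(f)$: since $f$ preserves defined applications, the products $f(\mathrm{T})f(a)$, $f(\mathrm{T}a)f(b)$, $f(\mathrm{F})f(a)$, $f(\mathrm{F}a)f(b)$ in $\A'$ are all defined, with $f(\mathrm{T})f(a)f(b) = f(a)$ and $f(\mathrm{F})f(a)f(b) = f(b)$.

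Now, since $\ran(f)$ is assumed to be a chain, the elements $f(\mathrm{T})$ and $f(\mathrm{F})$ are comparable. Suppose first that $f(\mathrm{T}) \le f(\mathrm{F})$, and fix arbitrary $a, b \in \A$. Applying monotonicity of the application to $f(\mathrm{T}) \le f(\mathrm{F})$ and $f(a) \le f(a)$ — both products $f(\mathrm{T})f(a)$ and $f(\mathrm{F})f(a)$ being defined — gives $f(\mathrm{T}a) \le f(\mathrm{F}a)$; applying monotonicity once more with $f(b)$ on the right gives $f(\mathrm{T}ab) \le f(\mathrm{F}ab)$, that is, $f(a) \le f(b)$. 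As $a$ and $b$ were arbitrary, interchanging them also yields $f(b) \le f(a)$, so $f(a) = f(b)$ by antisymmetry of $\le$. But $\mathrm{s} \neq \mathrm{k}$, so $\A$ has at least two elements; choosing $a \neq b$ contradicts injectivity of $f$. If instead $f(\mathrm{F}) \le f(\mathrm{T})$, the identical computation with the inequalities reversed gives $f(b) \le f(a)$ for all $a, b \in \A$, and the same contradiction follows. Hence $\ran(f)$ cannot be a chain, i.e.\ it contains incomparable elements.

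The only step requiring real care is making sure that monotonicity of the application is genuinely applicable at each stage — that both products being compared are defined — and this is precisely why the boolean combinators are used rather than arbitrary elements: their application is total on $\A$ and therefore on $\ran(f)$ via the embedding. Everything else is a two-step monotonicity computation together with antisymmetry, so I do not expect any further obstacle.
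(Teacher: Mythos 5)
Your proof is correct and follows essentially the same strategy as the paper's: pick two canonical combinators whose images must be incomparable, and derive a contradiction from comparability via monotonicity applied to defined products. The paper uses the projections $\mathrm{p_1}, \mathrm{p_2}$ and the single argument $\mathrm{p}\,\mathrm{p_2}\,\mathrm{p_1}$ to get a direct order contradiction, whereas you use the booleans $\mathrm{k}$ and $\mathrm{k}\mathrm{i}$ to collapse all of $\ran(f)$ and contradict injectivity -- a cosmetic difference only, and your care about definedness of each product is exactly the point that matters.
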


\begin{proof}
  Recall that every pca has a pairing function $\mathrm{p}$ and projections $\mathrm{p_1}$ and $\mathrm{p_2}$.
  Suppose for a contradiction, without loss of generality, that $f(\mathrm{p_1}) \subset f(\mathrm{p_2})$.
  Let $a = \mathrm{p} \cdot \mathrm{p_2} \cdot \mathrm{p_1}$, then
  \[
    f(\mathrm{p_2}) = f(\mathrm{p_1}a)
      = f(\mathrm{p_1})f(a) \subseteq f(\mathrm{p_2})f(a)
      = f(\mathrm{p_2}a)
      = f(\mathrm{p_1}),
  \]
  giving a contradiction.
\end{proof}

\begin{theorem}
  \label{thm:B_embed_turing}
  Let $X, Y \subseteq \omega$.
  There exists an embedding $\B^X \to \B^Y$ if and only if $X \le_T Y$.
\end{theorem}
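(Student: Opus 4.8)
The plan is to prove the two directions separately, with the ``if'' direction being entirely routine and the ``only if'' direction following the now-familiar decider template, but with the crucial twist supplied by Lemma~\ref{lem:monotonic_not_chain}. For the ``if'' direction: if $X \le_T Y$ then every partial $X$-computable function is partial $Y$-computable, so $\B^X$ is literally a sub-pca of $\B^Y$, and the inclusion map is an embedding. (Here one should check that application in $\B^X$ agrees with application in $\B^Y$ on the common elements, which is immediate from the coding~\eqref{alt}, since totality of $\Phi^{g\oplus h}_{g(0)}$ does not depend on which ambient pca we are in.)

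For the ``only if'' direction, suppose $f : \B^X \hookrightarrow \B^Y$ is an embedding. The first step is to apply Theorem~\ref{thm:B_decider} with $\A = \B^Y$ and $\gamma(n) = \Phi^Y_n$ (the effectively pca-valued partial numbering of $\B^Y$): for any chosen $\theta_\bot, \theta_\top \in \B^X$ we obtain a $Y$-computable $(f(\theta_\bot), f(\theta_\top))$-decider $d$ for $X$. To turn this into a Turing reduction $X \le_T Y$ via Theorem~\ref{thm:decider_with_ce}, we would need $\gamma$ to have $Y$-c.e.\ inequivalence; but as noted in the text, $\nsim_\gamma$ on $\B^Y$ is \emph{not} $Y$-c.e., because from codes of $\theta \subseteq \rho$ with $\rho$ a proper extension we cannot $Y$-c.e.-ably certify $\theta \ne \rho$. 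The fix is that we do not need full inequivalence: by the remark after Theorem~\ref{thm:decider_with_ce}, it suffices that the relations $x \ne f(\theta_\bot)$ and $x \ne f(\theta_\top)$ are $Y$-c.e.\ with respect to $\gamma$. If $f(\theta_\bot)$ and $f(\theta_\top)$ are $\subseteq$-incomparable partial functions, then for a code $n$ we have $\Phi^Y_n \ne f(\theta_\bot)$ iff there is some argument on which $\Phi^Y_n$ either converges to a value different from $f(\theta_\bot)$ or converges while $f(\theta_\bot)$ is undefined, \emph{or} $f(\theta_\bot)$ converges somewhere $\Phi^Y_n$ does not---but using incomparability with $f(\theta_\top)$ on the \emph{other} side, one argues that when $d(n)$ is one of the two incomparable values, the event ``$\Phi^Y_n$ disagrees with $f(\theta_\bot)$ at some converging stage'' is $Y$-c.e.\ and fires exactly when $d(n) \ne f(\theta_\bot)$. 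More precisely: since $f(\theta_\bot) \perp f(\theta_\top)$, whichever of the two $d(n)$ equals, it disagrees with the other at some argument where \emph{both} are defined, and such a disagreement is $Y$-c.e.\ detectable; so enumerating until $d(n)$ is seen to disagree (at a mutual point of definedness) with $f(\theta_\bot)$ or with $f(\theta_\top)$ decides $X$.

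The key step, then, is arranging that $f(\theta_\bot)$ and $f(\theta_\top)$ are incomparable, and this is exactly where Lemma~\ref{lem:monotonic_not_chain} enters: application in $\B^Y$ is monotonic with respect to the inclusion order $\subseteq$ on partial functions (if $\theta \subseteq \theta'$ and $\theta h$, $\theta' h$ are both defined then they are equal, and monotonicity of the associated operators is built into the Turing-functional semantics), so $\ran(f)$ contains incomparable elements $f(\mathrm{p_1})$ and $f(\mathrm{p_2})$ (or, after the argument in that lemma's proof, we simply know $\ran(f)$ is not a $\subseteq$-chain). We then choose $\theta_\bot, \theta_\top \in \B^X$ to be the preimages under $f$ of a pair of incomparable elements of $\ran(f)$---concretely $\mathrm{p_1}$ and $\mathrm{p_2}$---feed these into Theorem~\ref{thm:B_decider}, and run the $Y$-c.e.\ disagreement search above. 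I expect the main obstacle to be making the ``it suffices for $x \ne a_\bot$, $x \ne a_\top$ to be $Y$-c.e.'' bookkeeping fully rigorous in the partial-function setting: one must be careful that the $Y$-c.e.\ witness for $d(n) \ne f(\theta_\bot)$ is guaranteed to appear (not merely that its appearance is sound), which is precisely what incomparability of the two candidate values buys us---the value $d(n)$ takes always differs from \emph{one} of $f(\theta_\bot), f(\theta_\top)$ on their common domain, and in fact from the ``wrong'' one at a point where both converge.
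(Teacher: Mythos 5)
Your overall strategy follows the paper's: the inclusion handles the ``if'' direction, and for the converse you correctly combine Theorem~\ref{thm:B_decider} with Lemma~\ref{lem:monotonic_not_chain} to obtain a $Y\!$-computable $(f(\theta_\bot), f(\theta_\top))$-decider for $X$ with $f(\theta_\bot)$ and $f(\theta_\top)$ incomparable. But the final step has a genuine gap: you assert that since $f(\theta_\bot) \perp f(\theta_\top)$, whichever of the two $\Phi^Y_{d(n)}$ equals, it must disagree with the other at some argument where \emph{both} are defined, and your $Y\!$-c.e.\ search waits for exactly such a disagreement. This is false: $\subseteq$-incomparability of partial functions does not imply disagreement on the common domain. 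For instance, $f(\theta_\bot)$ and $f(\theta_\top)$ could have disjoint domains (say $\set{(0,0)}$ and $\set{(1,0)}$); they are then incomparable yet agree vacuously wherever both converge, your enumeration never fires, and the alleged reduction diverges. Lemma~\ref{lem:monotonic_not_chain} only supplies incomparability, not a conflict at a common point, so you cannot arrange the stronger property either.

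The repair --- which is what the paper does --- uses asymmetric witnesses instead of a mutual disagreement. By incomparability there is a $k_\bot$ with $f(\theta_\bot)(k_\bot)\terminates$ such that either $f(\theta_\top)(k_\bot)\diverges$ or $f(\theta_\top)(k_\bot) \neq f(\theta_\bot)(k_\bot)$, and symmetrically a $k_\top$. Given $n$, dovetail the computations $\Phi^Y_{d(n)}(k_\bot)$ and $\Phi^Y_{d(n)}(k_\top)$. Since $\Phi^Y_{d(n)}$ is one of the two functions and each is defined at its own witness point, at least one of these halts; whichever halts first, comparing its value with $f(\theta_\bot)(k_\bot)$ (respectively $f(\theta_\top)(k_\top)$) identifies which of the two functions $\Phi^Y_{d(n)}$ is, because agreement at $k_\bot$ rules out $f(\theta_\top)$ while disagreement rules out $f(\theta_\bot)$, and symmetrically at $k_\top$. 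This decides $n \in X$ from $Y$ without ever needing the two reference functions to conflict at a point of mutual definedness.
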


\begin{proof}
  One direction is trivial, as in the case of Theorem~\ref{thm:K2_embed_turing}.

  For the other direction, let $\gamma : \omega \to \B^Y$ be the mapping $\gamma(n) = \Phi^Y_n$.
  This is a $Y\!$-effectively pca-valued partial numbering, with the construction being the same
  as in Lemma~\ref{lem:K2_numbering}.

  By Theorem~\ref{thm:B_decider} there is an $(f(\theta_\bot), f(\theta_\top))$-decider $d$ for $X$,
  where $f(\theta_\bot)$ and $f(\theta_\top)$ can be taken to be incomparable.
  Pick a point $k_\bot$ such that $f(\theta_\bot)(k_\bot)\terminates$ and
  either $f(\theta_\top)(k_\bot)\diverges$ or $f(\theta_\bot)(k_\bot) \neq f(\theta_\top)(k_\bot)$,
  and a point $k_\top$ where the reverse is true.
  By computing $\Phi_{d(n)}^Y(k_\bot)$ and $\Phi_{d(n)}^Y(k_\top)$, one of which terminates,
  and comparing it to $f(\theta_\bot)(k_\bot)$ or $f(\theta_\top)(k_\top)$ respectively,
  we can decide whether $n \in X$.
\end{proof}

\begin{theorem}
  \label{thm:K2_to_B}
  Let $X, Y \subseteq \omega$.
  There exists an embedding $\K_2^X \to \B^Y$ if and only if $X \le_T Y$.
\end{theorem}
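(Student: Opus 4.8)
The plan is to prove both directions separately, following the pattern established for the analogous results about $\K_2$ and $\B$.

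\medskip

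\textbf{The ``if'' direction.} Suppose $X \le_T Y$. Then every $X$-computable function is $Y$-computable, so $\K_2^X$ is literally a sub-pca of $\K_2^Y$, and $\K_2^Y \hookrightarrow \B^Y$ via the inclusion (as noted in the excerpt, since the application in $\B$ agrees with that in $\K_2$ up to totality). Composing these two inclusions gives the desired embedding $\K_2^X \hookrightarrow \B^Y$.

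\medskip

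\textbf{The ``only if'' direction.} Suppose $f : \K_2^X \hookrightarrow \B^Y$ is an embedding; I want to conclude $X \le_T Y$. The natural numbering to use on $\B^Y$ is $\gamma(n) = \Phi_n^Y$ (with partial functions allowed), which is $Y$-effectively pca-valued by the same argument as in Lemma~\ref{lem:K2_numbering}. Applying Theorem~\ref{thm:K2_decider} to this numbering, I obtain, for any chosen $g_\bot, g_\top \in \K_2^X$, a $Y$-computable $(f(g_\bot), f(g_\top))$-decider $d$ for $X$, i.e.\ $\gamma(d(n)) = f(g_\bot)$ when $n \notin X$ and $\gamma(d(n)) = f(g_\top)$ when $n \in X$. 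Now $d(n)$ is a $Y$-index of the partial function $\Phi_{d(n)}^Y$, which equals $f(g_\bot)$ or $f(g_\top)$; I need to $Y$-computably decide which. The obstacle is exactly the one flagged before Lemma~\ref{lem:monotonic_not_chain}: in $\B^Y$ the inequivalence relation $\nsim_\gamma$ is not $Y$-c.e., because from a $Y$-index of a partial function one cannot decide whether its domain is a proper subset of another's. So I cannot simply invoke Theorem~\ref{thm:decider_with_ce}.

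\medskip

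To get around this I use the same trick as in the proof of Theorem~\ref{thm:B_embed_turing}. First choose $g_\bot, g_\top$ so that $f(g_\bot)$ and $f(g_\top)$ are incomparable in $\B^Y$ under the inclusion order $\subseteq$: by Lemma~\ref{lem:monotonic_not_chain} applied to $f$ (noting that application in $\B^Y$ is monotonic with respect to $\subseteq$), the range of $f$ contains incomparable elements, and since $f$ is injective I can pull these back to suitable $g_\bot, g_\top \in \K_2^X$. Then pick a point $k_\bot$ with $f(g_\bot)(k_\bot)\terminates$ and either $f(g_\top)(k_\bot)\diverges$ or $f(g_\bot)(k_\bot) \neq f(g_\top)(k_\bot)$, and a point $k_\top$ where the roles are swapped; these exist precisely because $f(g_\bot)$ and $f(g_\top)$ are incomparable. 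To decide whether $n \in X$: compute $d(n)$, then run $\Phi_{d(n)}^Y(k_\bot)$ and $\Phi_{d(n)}^Y(k_\top)$ in parallel. If $n \notin X$ then $\Phi_{d(n)}^Y = f(g_\bot)$, so $\Phi_{d(n)}^Y(k_\bot)\terminates$ and (if it also converges) disagrees with $f(g_\top)(k_\bot)$; if $n \in X$ then $\Phi_{d(n)}^Y = f(g_\top)$, so $\Phi_{d(n)}^Y(k_\top)\terminates$ and disagrees with $f(g_\bot)(k_\top)$. Thus at least one of the two computations halts, and comparing its output with the known values $f(g_\bot)(k_\bot)$ and $f(g_\top)(k_\top)$ tells us which case we are in. This is a $Y$-computable procedure, so $X \le_T Y$, completing the proof. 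The main obstacle, as indicated, is the failure of $Y$-c.e.\ inequivalence for $\B^Y$, handled exactly by forcing the decider's two output values to be $\subseteq$-incomparable.
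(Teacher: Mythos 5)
Your proof is correct and follows essentially the same route as the paper: the inclusion $\K_2^X \subseteq \B^Y$ for the easy direction, and for the converse, Lemma~\ref{lem:monotonic_not_chain} to obtain $\subseteq$-incomparable images $f(g_\bot), f(g_\top)$, the decider from Theorem~\ref{thm:K2_decider}, and the parallel-evaluation trick from Theorem~\ref{thm:B_embed_turing} to circumvent the failure of $Y$-c.e.\ inequivalence in $\B^Y$. The paper merely cites the argument of Theorem~\ref{thm:B_embed_turing} where you spell it out; there is no substantive difference.
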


\begin{proof}
  One direction is trivial, since if $X \le_T Y$ then $\K_2^X$ is a sub-pca of $\B^Y$.

  For the other direction, let $f : \K_2^X \to \B^Y$ be an embedding and pick $g_\bot, g_\top \in \K_2^X$
  such that $f(g_\bot)$ and $f(g_\top)$ are incomparable; this is possible by Lemma~\ref{lem:monotonic_not_chain}.

  Now let $d : \omega \to \omega$ be the $Y\!$-computable $(f(g_\bot), f(g_\top))$-decider given by Theorem~\ref{thm:K2_decider}.
  Since $f(g_\bot)$ and $f(g_\top)$ are incomparable, we can $Y\!$-computably decide whether $k \in X$
  by enumerating the graphs of $f(g_\bot)$ and $f(g_\top)$ until we see these disagree, as we do in the proof of
  Theorem~\ref{thm:B_embed_turing}.
\end{proof}

Let us now show that embeddings out of $\B^X$ can allow us to construct deciders for more than just $X$.
Compare this to Theorems~\ref{thm:K2X_to_K1Y} and~\ref{thm:GX_to_K2Y}.

\begin{theorem}
  \label{thm:B_ce_decider}
  Let $X, Y \subseteq \omega$ and let $\A$ be a pca with a $Y\!$-effectively pca-valued
  partial numbering $\gamma$.
  Given an embedding $f : \B^X \hookrightarrow \A$ and any $\theta
  \in \B^X$, there is a $Y\!$-computable $(f(\emptyset), f(\theta))$-decider $d$
  for every $X\!$-c.e.\ set $Z$.
\end{theorem}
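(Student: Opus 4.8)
The plan is to combine the encoding idea from Lemma~\ref{lem:K2_ce_decider} with the extra flexibility of $\B^X$, namely that its elements may be \emph{partial} functions, so that ``divergence'' can stand in for one of the two truth values. Let $Z$ be an $X$-c.e.\ set and fix an $X$-computable $s$-step approximation $Z_s$ with $Z = \bigcup_s Z_s$. First I would encode $X$ itself into an element $g\in\B^X$ by setting $g(0)=e$ for a code $e$ to be defined by the recursion theorem, and $g(n+1)$ to be the characteristic value of $n$ in $X$. As in Lemma~\ref{lem:K2_ce_decider}, I introduce the family $h_n\in\B^X$ with $h_n(0)=n$ and $h_n(k+1)=0$; these are uniformly constructed from finitely many fixed elements of $\B^X$, so using the $Y$-effectively pca-valued numbering $\gamma$ and its representation $\psi$ of application, one can $Y$-computably produce a $\gamma$-code for each $f(h_n)$.

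The key step is the choice of $e$. Using that application in $\B$ is defined via $g\cdot h = \Phi^{g\oplus h}_{g(0)}$ with the convention that the result is the partial function computed, and that $\B$ (unlike $\K_2$) permits partial outputs, I would arrange $\Phi^{g\oplus h}_e(k)$ so that $g\cdot h_n$ equals the given $\theta$ if $n\in Z$ and equals the empty function $\emptyset$ if $n\notin Z$. Concretely: on input $k$, the machine reads $n = h_n(1)$ (or $h(0)$, depending on the coding convention) from the oracle, then searches for a stage $s$ with $n\in Z_s$; if and when such $s$ is found it outputs $\theta(k)$ (simulating $\theta$, which is itself partial $X$-computable, on $k$), and otherwise it diverges. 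Thus if $n\in Z$ then $g\cdot h_n = \theta$, while if $n\notin Z$ the search never succeeds on any input, so $g\cdot h_n = \emptyset$ as a partial function. Note this genuinely uses partiality: in $\K_2$ one cannot realize $\emptyset$, which is precisely why the analogue there (Lemma~\ref{lem:K2_ce_decider}) produces a decider for an $X$-c.e.\ set using a \emph{different} construction.

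Finally I set $d(n)$ to be the $\gamma$-code of $f(g\cdot h_n)$, which is obtained $Y$-computably from the $\gamma$-code of $f(g)$, the $\gamma$-codes of the generators of the $h_n$, and $\psi$, since $f$ preserves application and $g\cdot h_n\terminates$ in $\B^X$ for every $n$. Because $f$ is an embedding, $\gamma(d(n)) = f(g\cdot h_n)$ equals $f(\emptyset)$ when $n\notin Z$ and $f(\theta)$ when $n\in Z$, so $d$ is a $Y$-computable $(f(\emptyset), f(\theta))$-decider for $Z$, as required. The main obstacle I anticipate is purely bookkeeping: getting the recursion-theorem definition of $e$ to interact correctly with the coding convention $g\cdot h = \Phi^{g\oplus h}_{g(0)}$ — in particular making sure $e$ can read off $n$ from the correct input/oracle slot of $h_n$ and that the divergence behaviour is exactly right for $k$ ranging over all of $\omega$ — but this is routine once the convention is pinned down, and there is no delicate ``same output for all codes'' issue here because each $h_n$ is a single fixed function rather than an arbitrary code of an element.
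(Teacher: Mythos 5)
Your proof is correct, but it follows a different construction than the paper's. The paper's proof is a one-line adaptation of Theorem~\ref{thm:K2_decider}: it represents $Z$ by a partial $X$-computable $\xi$ with $\dom(\xi)=Z$, and defines $t,h\in\B^X$ with $(t\mu)(n)=\mu(n+1)$ and $(h\mu)(n)=\theta(n)$ if $\mu(0)\terminates$ (diverging otherwise), so that $h(\overline{n}t\xi)$ is $\theta$ or $\emptyset$ according to whether $\xi(n)\terminates$, i.e.\ whether $n\in Z$. You instead port the dispatcher construction of Lemma~\ref{lem:K2_ce_decider}: a single element $g$ whose code $e$ reads $n$ off the probe $h_n$, searches the $X$-computable approximation $Z_s$, and only then simulates $\theta$. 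Both constructions are sound and both hinge on the same essential point, which you identify explicitly: partiality lets divergence realize the value $\emptyset$, which is exactly what fails in $\K_2$ and forces the weaker conclusion of Lemma~\ref{lem:K2_ce_decider}. The paper's version is lighter (no recursion-theorem code, no encoding of $X$ into $g$, and it reuses the Church-numeral/head-tail template common to all the decider theorems), and it makes visible the subtlety the paper flags — that one cannot replace $\emptyset$ by an arbitrary $\theta_\bot\subseteq\theta_\top$ because $\mu(0)$ and $\theta_\bot(n)$ cannot be run in parallel; your sequential ``search, then simulate $\theta$'' code runs into and correctly respects the same constraint. Your version buys uniformity with Lemma~\ref{lem:K2_ce_decider} and avoids having to verify that $\overline{n}t\xi$ shifts $\xi$ by $n$. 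One minor bookkeeping remark: with the paper's convention $h_n(0)=n$ and oracle $g\oplus h_n$, the code $e$ reads $n$ from position $0$ of the right-hand oracle; your hedge about $h_n(1)$ versus $h_n(0)$ resolves that way, and the rest of your argument is unaffected.
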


\begin{proof}
  The proof is similar to that of Theorem~\ref{thm:K2_decider}.
  Let $Z$ be an $X$-c.e.\ set and let $\xi$ be an $X$-computable partial function such that $\dom(\xi) = Z$.

  Define $t, h \in \B^X$ by
  \begin{align*}
    t(\mu)(n) &= \mu(n+1)\\
    h(\mu)(n) &= \begin{cases}
      \theta(n) & \text{if $\mu(0)\terminates$}\\
      \,\,\,\diverges & \text{otherwise}\\
    \end{cases}.
  \end{align*}

  It may be tempting to think that we can take any $\theta_\bot \subseteq
  \theta_\top$ instead of $\emptyset$ and $\theta$, but this is not the case,
  since we cannot perform the computations $\mu(0)$ and $\theta_\bot(n)$ in
  parallel.

  As in the proof of Theorem~\ref{thm:K2_decider}, the term $h(\overline{n}t\xi)$
  is $\emptyset$ if $n \not\in X$ and is $\theta$ if $n \in X$.
  Letting $d(n)$ be a code of $f(h(\overline{n}t\xi))$, which can be constructed
  $Y\!$-computably using the fact $\gamma$ is a $Y\!$-effectively pca-valued partial numbering,
  we see that $d$ is indeed an $(f(\emptyset), f(\theta))$-decider for $X'$.
\end{proof}

\begin{corollary}
  \label{cor:BX_to_K2Y}
  Let $X, Y \subseteq \omega$.
  If there exists an embedding $f : \B^X \hookrightarrow \K_2^Y$ then $X' \le_T Y$.
\end{corollary}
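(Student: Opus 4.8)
The plan is to obtain this as a direct application of Theorem~\ref{thm:B_ce_decider}, combined with the numbering of $\K_2^Y$ supplied by Lemma~\ref{lem:K2_numbering} and the reduction principle of Theorem~\ref{thm:decider_with_ce}. This follows exactly the three-step pattern already used for Theorems~\ref{thm:K2X_to_K1Y}: encode an $X$-c.e.\ set into $\B^X$, push it through the embedding to get a $Y$-computable decider for that set in $\K_2^Y$, and then read off a Turing reduction from the decider.

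First I would equip the target pca $\K_2^Y$ with a suitable partial numbering. By Lemma~\ref{lem:K2_numbering}, applied with the oracle $Y$ in place of $X$, the map $n \mapsto \Phi^Y_n$ restricted to those $n$ for which $\Phi^Y_n$ is total is a partial numbering $\gamma_t$ of $\K_2^Y$ which is effectively pca-valued --- hence in particular $Y\!$-effectively pca-valued --- and which has $Y\!$-c.e.\ inequivalence. Next I would choose the two elements the decider will separate: since every pca is infinite, $\B^X$ contains some $\theta$ with $\theta \neq \emptyset$, and then $f(\emptyset) \neq f(\theta)$ because $f$ is injective. Now $X'$ is $X$-c.e., so Theorem~\ref{thm:B_ce_decider}, applied with $\A = \K_2^Y$, the numbering $\gamma_t$, the embedding $f$, and $Z = X'$, yields a $Y\!$-computable $(f(\emptyset), f(\theta))$-decider $d$ for $X'$. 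Finally I would feed this into Theorem~\ref{thm:decider_with_ce} with $a_\bot = f(\emptyset)$, $a_\top = f(\theta)$, the numbering $\gamma_t$, and the decider $d$: since $f(\emptyset) \neq f(\theta)$ and $\gamma_t$ has $Y\!$-c.e.\ inequivalence, this gives $X' \le_T Y$, as required.

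I do not expect a serious obstacle here; the only points needing care are bookkeeping ones. One must make sure to place on $\K_2^Y$ the \emph{total-restriction} numbering $\gamma_t$ of Lemma~\ref{lem:K2_numbering}, so that it genuinely has $Y\!$-c.e.\ inequivalence --- the plain numbering $n \mapsto \Phi^Y_n$ of $\B^Y$ would fail this, as discussed before Lemma~\ref{lem:monotonic_not_chain}, but here the target is $\K_2^Y$, whose elements are total, so the obstruction disappears --- and one must pick $\theta \neq \emptyset$ so that $f(\emptyset)$ and $f(\theta)$ are distinct. Everything else is a direct invocation of results already established.
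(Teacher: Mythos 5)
Your proposal is correct and follows exactly the paper's own proof, which is the one-line observation that the corollary follows from Theorems~\ref{thm:B_ce_decider} and~\ref{thm:decider_with_ce} since $X'$ is $X$-c.e.; you have simply made explicit the bookkeeping the paper leaves implicit (using the total-restriction numbering $\gamma_t$ from Lemma~\ref{lem:K2_numbering} and choosing $\theta \neq \emptyset$ so that $f(\emptyset) \neq f(\theta)$). No gaps.
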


\begin{proof}
  This follows from Theorems~\ref{thm:B_ce_decider} and~\ref{thm:decider_with_ce} since $X'$ is $X$-c.e.
\end{proof}

\begin{corollary}\label{cor:BX_to_K2X}
  There is no embedding $\B^X \hookrightarrow \K_2^X$ for any~$X$.
\end{corollary}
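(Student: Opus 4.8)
The plan is to read this off directly from the preceding corollary. Suppose toward a contradiction that there is an embedding $\B^X \hookrightarrow \K_2^X$. Then Corollary~\ref{cor:BX_to_K2Y}, applied in the special case $Y = X$, yields $X' \le_T X$. This is impossible, since the Turing jump is strictly increasing: $X <_T X'$ for every $X \subseteq \omega$. Hence no embedding $\B^X \hookrightarrow \K_2^X$ can exist.

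I do not expect any genuine obstacle here: the entire content has already been packaged into Corollary~\ref{cor:BX_to_K2Y} (which in turn rests on Theorems~\ref{thm:B_ce_decider} and~\ref{thm:decider_with_ce} together with the fact that $X'$ is $X$-c.e.), and the only additional ingredient is the classical observation that $X' \not\le_T X$. One could phrase this slightly differently by noting that an embedding $\B^X \hookrightarrow \K_2^X$ would, via the inclusion $\K_2^X \hookrightarrow \B^X$, also contradict nothing on its own — so the essential point really is the jump inequality $X <_T X'$, which is what forces the conclusion.
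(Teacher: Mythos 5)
Your proof is correct and is exactly the argument the paper intends: the corollary is stated without proof immediately after Corollary~\ref{cor:BX_to_K2Y}, since taking $Y = X$ there gives $X' \le_T X$, contradicting the strictness of the Turing jump. No issues.
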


Since $\K_1^X$ embeds in $\K_2^X$, Corollary~\ref{cor:BX_to_K2X} holds if we replace $\K_2^X$ with $\K_1^X$.
However, in this case we can say something more precise.

\begin{theorem}
  \label{thm:BX_to_K1Xjj}
  For every $X \subseteq \omega$, there is an embedding $\B^X \hookrightarrow \K_1^{X''}$.
\end{theorem}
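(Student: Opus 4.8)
The plan is to construct the embedding $f:\B^X \hookrightarrow \K_1^{X''}$ directly, imitating the proof of Theorem~\ref{thm:count_to_K2} but with $\K_1^{X''}$ in place of $\K_2^X$. Recall that $\B^X$ consists of partial $X$-computable functions $\omega\to\omega$, and that the map $\gamma(n)=\Phi_n^X$ (restricted to its domain) is an effectively pca-valued partial numbering of $\B^X$ — this is the observation already used in Lemma~\ref{lem:K2_numbering} and in the proof of Theorem~\ref{thm:B_embed_turing}, and it works for partial functions exactly as for total ones, since application in $\B^X$ is defined by the same formula $g\cdot h=\Phi^{g\oplus h}_{g(0)}$. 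So $\B^X$ carries a partial numbering $\gamma$ with an $X$-computable representation $\psi$ of application. The difficulty, exactly as in Theorem~\ref{thm:count_to_K2}, is that the embedding must send each element $\theta\in\B^X$ to a \emph{single} value, and $f(\theta)\cdot f(\theta')$ must reproduce that value no matter which $\gamma$-codes of $\theta$ and $\theta'$ we happened to start from.

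The strategy to resolve this is to first pass to a \emph{canonical} code for each element. Since $\gamma$ is effectively pca-valued (not \emph{strongly} so), we cannot directly appeal to the section $\eta$ of Theorem~\ref{thm:count_to_K2}; instead I would use an oracle of sufficient strength to pick canonical codes. Concretely, the relation ``$\gamma(n)=\gamma(m)$'' for $\B^X$ — i.e.\ $\Phi_n^X$ and $\Phi_m^X$ are the same partial function — is a $\Pi^0_2(X)$ relation, hence $X''$-computable; and ``$n\in\dom(\gamma)$'', i.e.\ $\Phi_n^X$ is total, is also $\Pi^0_2(X)$, hence $X''$-computable. Thus $X''$ can, given $\theta\in\B^X$ presented by \emph{any} code $n\in\dom(\gamma)$, compute the least code $\eta(\theta)=\min\{m : \Phi_m^X=\Phi_n^X\}$ (this least code is again in $\dom(\gamma)$, so the search terminates). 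This $\eta$ is a genuine section of $\gamma$, and it is $X''$-computable on codes. Now mimic Theorem~\ref{thm:count_to_K2}: define $f(\theta)$ to be an index (via the recursion theorem, relative to $X''$) of the partial $X''$-computable function that, on input a code $p$ for some $\theta'\in\B^X$, first computes $\eta(\theta')$ from $p$ using the $X''$-oracle, then computes $k=\psi(\eta(\theta),\eta(\theta'))$ — but wait, this needs $f$ to already know $\eta(\theta)$, so more precisely: let $f(\theta)$ be the $X''$-index obtained by the $S$-$m$-$n$ theorem from the parameter $\eta(\theta)$, computing $n\mapsto$ (an $X''$-index for) $\Phi_{\text{fixed}}^{X''}$ applied to $\langle \eta(\theta), \eta(\cdot)\rangle$, using the recursion theorem to close the loop so that the indices produced are again of the form $f(\theta'')$. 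Injectivity of $f$ follows from injectivity of $\eta$ together with injectivity of the $S$-$m$-$n$ function; and if $\theta\cdot\theta'\terminates=\theta''$ in $\B^X$, then $\psi(\eta(\theta),\eta(\theta'))$ is a code for $\theta''$, so that $\eta$ applied to it equals $\eta(\theta'')$, whence $f(\theta)\cdot f(\theta')=f(\theta'')$ as required.

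The main obstacle I anticipate is getting the self-reference right: $f(\theta)$ must be an $X''$-index of a function that, to compute its output on $\theta'$, needs to produce \emph{another} value of $f$, namely $f(\theta'')$ where $\theta''=\theta\cdot\theta'$. This is handled by a simultaneous use of the recursion theorem and $S$-$m$-$n$ relative to $X''$, exactly as in Theorems~\ref{thm:count_to_K1} and~\ref{thm:count_to_K2} where $g$ is defined so that $g(\psi(n,m))$ appears on the right-hand side; one defines $f$ through an auxiliary injective $X''$-computable $g$ on codes with $\Phi^{X''}_{g(n)}(m)\simeq g(\psi(\eta^{-1}\text{-stuff}))$ and sets $f(\theta)=g(\eta(\theta))$. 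The only genuinely new ingredient beyond Theorem~\ref{thm:count_to_K2} is the complexity calculation showing that totality of $\Phi^X_n$ and equality of partial $X$-computable functions are both $X''$-decidable, which yields the $X''$-computable section $\eta$; everything else is a routine adaptation. (One should also double-check that $\B^X$ is infinite and that $\mathrm{s},\mathrm{k}$ behave as expected, but this is standard.)
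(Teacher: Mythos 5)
Your proposal is correct and follows essentially the same route as the paper: the paper likewise uses that equality of partial $X$-computable functions is $\Pi^0_2(X)$, hence $X''$-decidable, to pass to minimal codes (your section $\eta$), and then runs the recursion-theorem construction of Theorem~\ref{thm:count_to_K1} with an $X''$-computable ``minimalized'' representation $h'$ of application. (Your aside about totality of $\Phi^X_n$ is unnecessary, since for $\B^X$ every index is a valid code, but this does not affect the argument.)
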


\begin{proof}
  Let $X \subseteq \omega$.
  There is a total computable binary function $h$ such that in $\B^X$ 
  we have for all $n, m$,
  \[
    \Phi^X_{h(n, m)} = \Phi^X_n \cdot \Phi^X_m.
  \]
  Note that using $X''$, we can decide, given $n$ and $m$, whether $\Phi^X_n = \Phi^X_m$.
  Let $h'$ be the $X''$-computable function such that for all $n, m$, $\Phi^X_{h(n, m)} = \Phi^X_{h'(n, m)}$
  and every $h'(n, m)$ is minimal.
  Using the $S$-$m$-$n$ and recursion theorems, define $g$ to be an injective computable function such that
  \[
    \Phi^{X''}_{g(n)}(k) =
    \begin{cases}
      g(h'(n, m)) & \text{if $m$ is minimal such that $g(m) = k$}\\
      \diverges & \text{otherwise.}
    \end{cases}
  \]

  We are now ready to define our embedding $f : \B^X \hookrightarrow \K_1^{X''}$.
  Given a $\psi \in \B^X$, let $f(\psi) = g(n)$ where $n$ is minimal such that $\Phi^X_n = \psi$.

  To verify that this construction is correct, let $\Phi^X_n, \Phi^X_m$ be elements of $\B^X$
  with $n, m$ minimal.
  By construction, $h'(n, m)$ is the minimal code of $\Phi^X_n \cdot \Phi^X_m$.
  Now in $\K_1^{X''}$ we have 
  \[
    g(n)\cdot g(m) = \Phi^{X''}_{g(n)}(g(m)) = g(h'(n, m)),
  \]
  as desired.
\end{proof}

\begin{theorem}
  \label{thm:BX_to_K1Y}
  For all $X, Y \subseteq \omega$, if there exists an embedding $f : \B^X \hookrightarrow \K_1^Y$
  then $X'' \le_T Y$.
\end{theorem}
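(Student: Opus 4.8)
The plan is to run the argument of Theorem~\ref{thm:B_ce_decider}, but to arrange the two ``values'' of the resulting decider to coincide or differ according to an $X''$-complete condition, and then to exploit that equality of elements of $\K_1^Y$ is decidable. First, recall that $\{\,i : \Phi^X_i \text{ is total}\,\}$ is $\Pi^0_2(X)$-complete via $X$-computable reductions; since $X''$ is $\Sigma^0_2(X)$-complete its complement is $\Pi^0_2(X)$, so there is an $X$-computable $t$ with $n \notin X'' \iff \Phi^X_{t(n)}$ is total. Let $\nu \in \B^X$ be the constant function $1$, and for each $n$ let $\mu_n \in \B^X$ be the partial $X$-computable function with $\mu_n(k) = 1$ if $\Phi^X_{t(n)}(k)\terminates$ and $\mu_n(k)\diverges$ otherwise. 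Then $\mu_n$ is total exactly when $\Phi^X_{t(n)}$ is, and when total it equals $\nu$, while if not total it differs from $\nu$; hence $\mu_n = \nu \iff n \notin X''$. Since $t$ is $X$-computable, the map $n \mapsto \mu_n$ is representable in the pca $\B^X$: by combinatory completeness there is a fixed $H \in \B^X$ with $H\overline{n}\terminates = \mu_n$ for all $n$, where $\overline{n}$ is the Church numeral. As in the proof of Theorem~\ref{thm:B_ce_decider}, where the application producing $\emptyset$ is defined, the application $H\overline{n}$ is defined in $\B^X$ even though $\mu_n$ may be partial.

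Now I would imitate the bookkeeping of Theorem~\ref{thm:B_ce_decider}. The identity numbering of $\K_1^Y$ is $Y\!$-effectively pca-valued, so, using fixed codes for $f(H)$, $f(\mathrm s)$ and $f(\mathrm k)$, one computes $Y\!$-computably and uniformly in $n$ the element $f(\overline{n}) \in \K_1^Y$ by evaluating the $\mathrm s,\mathrm k$-term for $\overline{n}$ inside $\K_1^Y$; then $d(n) := f(H)\cdot f(\overline{n})$ is $Y\!$-computable, and since $f$ preserves the defined application $H\overline{n}$ we get $d(n) = f(H\overline{n}) = f(\mu_n)$. Because $f$ is injective, $d(n) = f(\nu) \iff \mu_n = \nu \iff n \notin X''$; as $f(\nu)$ is a fixed natural number and $d$ is $Y\!$-computable, $\overline{X''}$, and hence $X''$, is $Y\!$-computable. (Equivalently, $d$ is a $Y\!$-computable $(\{f(\nu)\}, \K_1^Y \setminus \{f(\nu)\})$-decider for $X''$, so one may also invoke Theorem~\ref{thm:set_decider_with_ce}, since inequivalence in $\K_1^Y$ is decidable and $\{f(\nu)\}$ together with its complement are $Y\!$-c.e.)

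The step I expect to require the most care is the one inherited from Theorem~\ref{thm:B_ce_decider}: checking that $H\overline{n}$, which must yield the possibly-partial function $\mu_n$, is genuinely a \emph{defined} application of $\B^X$, so that $f$ may be pushed through it. This is precisely where the partiality of $\B$ (as against $\K_2$) is used, and it is responsible for the improvement from the bound $X' \le_T Y$ of Corollary~\ref{cor:BX_to_K2Y} to $X'' \le_T Y$ here: the first jump is supplied by the ability of $\B^X$ to output partial functions, and the second comes for free from the decidability of equality in $\K_1^Y$ (which is why the identity numbering's triviality, rather than merely $Y\!$-c.e.\ inequivalence, is what matters).
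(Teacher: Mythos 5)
Your proof is correct, and it is a genuine (if close) variant of the paper's argument. The paper proves the theorem by showing that the $\Pi^X_2$-complete set $\bigset{\tuple{n,m} \mid \Phi^X_n = \Phi^X_m}$ becomes $Y$-decidable: using a finite set $E \subseteq \B^X$ that uniformly generates every $\Phi^X_n$ as a term (via a universal element $j$ with $j \cdot h_n = \Phi^X_n$), it builds a $Y\!$-computable $g$ with $g(n) = f(\Phi^X_n)$ and then uses injectivity of $f$ together with decidability of equality in $\K_1^Y$. You use exactly the same two ingredients --- uniform finite generation of partial $X$-computable functions inside $\B^X$ (your $H$ with $H\overline{n}\terminates = \mu_n$ is the same universal-element trick as the paper's footnote) and decidability of equality in $\K_1^Y$ --- but you aim them at a different $\Pi^0_2(X)$-complete target: totality, encoded as equality with the single fixed element $\nu$, rather than the full equality relation. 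This lets you phrase the conclusion as a $(\set{f(\nu)}, \K_1^Y - \set{f(\nu)})$-decider and plug into the decider machinery of Theorem~\ref{thm:set_decider_with_ce}, which integrates the result more tightly with the rest of the paper's framework; the paper's version is marginally more economical in that it needs no completeness reduction $t$ and decides all of $\bigset{\tuple{n,m} \mid \Phi^X_n = \Phi^X_m}$ at once. Your closing diagnosis of where each jump comes from is also accurate: with only $Y\!$-c.e.\ inequivalence in the target (as in $\K_2^Y$) your construction would only yield that $X''$ is $Y\!$-c.e., and it is the full decidability of equality in $\K_1^Y$ that upgrades this to $X'' \le_T Y$.
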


\begin{proof}
  We show that the $\Pi^X_2$-complete set \[\left\{\tuple{n, m} \mid \Phi^X_n = \Phi^X_m\right\}\] is $Y$-computable.

  There is a finite set $E \subseteq \B^X$ such that every $\Phi^X_n \in \B^X$ 
  can be written as a term consisting of elements of $E$, 
  where a description of this term is computable 
  from~$n$.\footnote{To see how to achieve this, let $h_n(x) = n$
  and choose $j \in \B^X$ such that $j \cdot h_n = \Phi^X_n$. 
  Note that we can build every $h_n$ from a finite set by
  using the Church encoding of numerals, which in turn can be constructed from only $\mathrm{s}$ and $\mathrm{k}$.}

  Suppose now that there exists an embedding $f : \B^X \hookrightarrow \K_1^Y$.
  For every element $\psi \in E$ we have a corresponding code $f(\psi) \in \K_1^Y$. 
  Since $E$ is finite, we can thus construct a $Y$-computable function $g$ 
  such that $g(n) = f(\Phi^X_n)$.
  Now, since $f$ is an embedding, for any $n, m \in \omega$, $g(n) = g(m)$ 
  if and only if $\Phi^X_n = \Phi^X_m$, giving the desired result.
\end{proof}

Together, Theorems~\ref{thm:BX_to_K1Xjj} and~\ref{thm:BX_to_K1Y} precisely characterize the
embeddings $\B^X \hookrightarrow \K_1^Y$, since by Theorem~\ref{thm_ST:1} 
$Z \le_T Y$ implies that $\K_1^Z$ embeds in $\K_1^Y$.

\begin{corollary}
  For all $X, Y \subseteq \omega$, there exists an embedding $\B^X \hookrightarrow \K_1^Y$ if and only if $X'' \le_T Y$.
\end{corollary}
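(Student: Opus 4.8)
The plan is to deduce this corollary directly from the two preceding theorems together with Theorem~\ref{thm_ST:1}, so that no new construction is needed. For the ``only if'' direction, suppose there exists an embedding $\B^X \hookrightarrow \K_1^Y$. Then Theorem~\ref{thm:BX_to_K1Y} immediately gives $X'' \le_T Y$, and there is nothing further to do.

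For the ``if'' direction, assume $X'' \le_T Y$. First I would invoke Theorem~\ref{thm:BX_to_K1Xjj} to obtain an embedding $f_0 : \B^X \hookrightarrow \K_1^{X''}$. Next, since $X'' \le_T Y$, Theorem~\ref{thm_ST:1} (the Shafer--Terwijn direction) supplies an embedding $f_1 : \K_1^{X''} \hookrightarrow \K_1^Y$, in fact given by a computable function. It then remains only to observe that the composite $f_1 \circ f_0 : \B^X \to \K_1^Y$ is again an embedding: it is injective as a composition of injections, and if $ab\terminates$ in $\B^X$ then $f_0(a)f_0(b)\terminates = f_0(ab)$ in $\K_1^{X''}$, whence $f_1(f_0(a))f_1(f_0(b))\terminates = f_1(f_0(ab))$ in $\K_1^Y$ since $f_1$ preserves defined applications. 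This yields the required embedding $\B^X \hookrightarrow \K_1^Y$.

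I do not expect any genuine obstacle here; all of the substantive work has been isolated into Theorems~\ref{thm:BX_to_K1Xjj} and~\ref{thm:BX_to_K1Y}, and the only mild point to check is that embeddings of pcas compose, which is routine from the definition. Equivalently, one may simply cite the remark immediately preceding the corollary: Theorem~\ref{thm:BX_to_K1Y} gives necessity of $X'' \le_T Y$, while Theorem~\ref{thm:BX_to_K1Xjj} together with Theorem~\ref{thm_ST:1} gives sufficiency, so the two results jointly pin down exactly the condition $X'' \le_T Y$.
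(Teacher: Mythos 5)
Your proposal is correct and is exactly the argument the paper intends: necessity from Theorem~\ref{thm:BX_to_K1Y}, and sufficiency by composing the embedding $\B^X \hookrightarrow \K_1^{X''}$ of Theorem~\ref{thm:BX_to_K1Xjj} with the embedding $\K_1^{X''} \hookrightarrow \K_1^Y$ given by Theorem~\ref{thm_ST:1} when $X'' \le_T Y$. The observation that embeddings of pcas compose is indeed routine and is the only thing to check.
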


Note that Theorem~\ref{thm:BX_to_K1Y} can be seen as analogous to Theorem~\ref{thm:K2X_to_K1Y}
and Corollary~\ref{cor:BX_to_K2Y}.
We expect Theorem~\ref{thm:BX_to_K1Xjj} to also have such analogous results.

\begin{conjecture}
  For every $X \subseteq \omega$, there exist embeddings $\K_2^X \hookrightarrow \K_1^{X'}$ and $\B^X \hookrightarrow \K_2^{X'}$.
\end{conjecture}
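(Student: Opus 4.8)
The plan is to follow the template of Theorem~\ref{thm:BX_to_K1Xjj}: build each embedding from a computable injection $g$, defined by the $S$-$m$-$n$ and recursion theorems, together with an oracle-computable \emph{canonicalization} of codes. For $\K_2^X \hookrightarrow \K_1^{X'}$, let $h$ be the computable function with $\Phi^X_{h(n,m)} = \Phi^X_n \cdot_{\K_2^X} \Phi^X_m$ whenever the right-hand side is total, and suppose we have a partial $X'$-computable $c$ that, on every $n$ coding a total $X$-computable function, returns an index $c(n)$ with $\Phi^X_{c(n)} = \Phi^X_n$ and such that $\Phi^X_n = \Phi^X_{n'}$ (both total) implies $c(n) = c(n')$. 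Set $h'(n,m) = c(h(n,m))$, take $g$ injective computable with $\Phi^{X'}_{g(n)}(g(m)) = g(h'(n,m))$ (divergent off $\ran(g)$ in the second argument), and define $f(\phi) = g(c(n))$ for any $X$-index $n$ of $\phi$. Since $c$ is invariant on equivalence classes this is well defined and injective, and $f(\phi)\cdot f(\psi) = g(c(h(c(n_\phi),c(n_\psi)))) = g(c(n_{\phi\psi})) = f(\phi\psi)$ whenever $\phi\psi$ is defined in $\K_2^X$, exactly as in Theorem~\ref{thm:BX_to_K1Xjj}. Equivalently, $c$ turns the standard numbering $n\mapsto\Phi^X_n$ of $\K_2^X$ into an injective --- hence automatically strongly --- $X'$-effectively pca-valued partial numbering, and one invokes Theorem~\ref{thm:count_to_K1}.

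For $\B^X \hookrightarrow \K_2^{X'}$ the same canonicalization is needed, but one also exploits that the elements of $\K_2^{X'}$ are infinite, mirroring Theorem~\ref{thm:count_to_K2}: take $f(\theta)$ to be the total $X'$-computable function storing a fixed code $e$ in position $0$, recording $1+\theta(k)$ or $0$ in position $1+k$ according to whether $\theta(k)\terminates$ (this is the canonical ``graph with halting markers'' of $\theta$, computable from $X'$), and devoting further reserved positions to a copy of $X'$ and to a canonical $X$-index $c(n)$ of $\theta$. The code $e$, obtained from the recursion theorem, reads these data off its two oracles, forms via $c$ a canonical $X$-index of $\theta\cdot\rho$, uses the encoded $X'$ to compute the halting markers of $\theta\cdot\rho$, and writes out $f(\theta\cdot\rho)$; the verification that $f$ is an embedding into $\K_2^{X'}$ is then routine. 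Once more this amounts to equipping $\B^X$ with an injective $X'$-effectively pca-valued partial numbering and applying Theorem~\ref{thm:count_to_K2}.

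The genuine obstacle in both cases is the construction of $c$. The naive recipe --- search through all indices $j$ and keep the first one whose function equals the target --- does not work relative to $X'$: the set of indices of total $X$-computable functions is $\Pi^X_2$-complete, and equality of partial $X$-computable functions is $\Pi^X_2$-complete, so discarding the ``bad'' indices below the intended one appears to require $X''$ rather than $X'$ (which is exactly why Theorem~\ref{thm:BX_to_K1Xjj} only yields $\K_1^{X''}$). Proving the conjecture therefore calls for a subtler device: for instance an $X'$-computable family of indices that still meets every total (resp.\ partial) $X$-computable function and on which equality is $X'$-decidable, so that canonicalization can search within it; or a construction of $f$ that sidesteps canonical codes altogether, perhaps producing $f(\phi\psi)$ as a stabilizing limit of $X$-approximations rather than computing it outright. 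Once such a $c$ --- or a substitute for it --- is in hand the remainder is bookkeeping, and by Theorem~\ref{thm:K2X_to_K1Y} and Corollary~\ref{cor:BX_to_K2Y} the resulting embeddings $\K_2^X\hookrightarrow\K_1^{X'}$ and $\B^X\hookrightarrow\K_2^{X'}$ would be optimal.
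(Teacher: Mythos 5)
This statement is posed in the paper as an open conjecture; the paper gives no proof of it, only the weaker Theorem~\ref{thm:BX_to_K1Xjj} ($\B^X \hookrightarrow \K_1^{X''}$) which would follow from the conjecture by composition. So there is no ``paper proof'' to compare against, and your submission should be judged simply on whether it establishes the statement. It does not: the entire argument is conditional on the existence of the canonicalization function $c$, and you explicitly concede in your final paragraph that you cannot construct $c$ from $X'$. What you have written is a reduction of the conjecture to another open problem (find an $X'$-computable selector of canonical indices, or an $X'$-decidable equality on a sufficiently rich family of indices), together with a correct diagnosis of why the naive search fails --- equality of (partial or total) $X$-computable functions is $\Pi^X_2$, so eliminating smaller indices costs $X''$, which is exactly the price paid in Theorem~\ref{thm:BX_to_K1Xjj}. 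That diagnosis is accurate and the surrounding bookkeeping (reducing both embeddings to producing an injective, hence strongly, $X'$-effectively pca-valued partial numbering and then invoking Theorems~\ref{thm:count_to_K1} and~\ref{thm:count_to_K2}) is sound as far as it goes, but the missing ingredient is the whole content of the conjecture, not a routine detail.

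One further caution: even granting some $X'$-computable substitute for $c$, your sketch for $\B^X \hookrightarrow \K_2^{X'}$ leans on the idea that $f(\theta)$ can record the full ``graph with halting markers'' of $\theta$ and that $e$ can recover a canonical $X$-index of $\theta\cdot\rho$ from the oracles. Recovering an index of $\theta$ from its graph is itself not effective, so you would need to store the index explicitly (as you propose) and then face the same canonicalization problem for the product $\theta\cdot\rho$, whose index is computed by $h$ but whose canonical index again requires deciding equality of partial $X$-computable functions. The two halves of your construction therefore both bottom out in the same unresolved step, and the statement remains, as the paper says, a conjecture.
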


Note that Theorem~\ref{thm:BX_to_K1Xjj} follows from this conjecture, since we can compose the embeddings.
All in all, this is an interesting example of a case where working with all partial computable functions
is considerably simpler than working only with the total computable functions.

To conclude, take $X \subseteq \omega$ and let us remark on the curious relation between $\K_2^X$ and $\B^X$ that these results reveal.
We know that $\K_2^X$ embeds in $\B^X$ (Theorem~\ref{thm:K2_to_B}) but not vice-versa (Corollary~\ref{cor:BX_to_K2Y}).
This suggests that $\B^X$ is in some sense `more powerful' than $\K_2^X$.
Also note that $\K_2^X$ is never isomorphic to $\B^Y$ for any $Y$
(in whatever sense of the notion of isomorphic that preserves 
embeddability).
Namely, $\K_2^X$ and $\B^Y$ never embed into the same pcas: 
If $X\leq_T Y$ then $\K_2^X \hookrightarrow \K_2^Y$ but 
$\B^Y \not\hookrightarrow \K_2^Y$, and if 
$X\not\leq_T Y$ then $\B^Y\hookrightarrow\B^Y$ but 
$\K_2^X \not\hookrightarrow \B^Y$.

\section{Scott's graph model} \label{sec:G}

In 1969, Dana Scott introduced his graph model~\cite{Scott}, 
which gives rise to the following pca structure on $\mathcal{P}(\omega)$.   

Let $D_u$ denote the canonical finite set with code~$u$.
We follow the convention that for finite sets $X \subset \omega$,
$X$ is coded by $\sum_{k \in X} 2^k$.
In particular, for every $n \in \omega$, $D_{2^n} = \set{n}$.
Let $\tuple{\cdot , \cdot}$ denote a bijective pairing scheme such that
$\tuple{0, 0} = 0$.

We define an application operator on $\mathcal{P}(\omega)$ as follows: given
$X, Y \subseteq \omega$, let
\begin{equation}\label{def:app}
    X \cdot Y = \bigset{n \mid \ex u.\, \tuple{n, u} \in X \wedge D_u\subseteq Y}.
\end{equation}
This application on $\mathcal{P}(\omega)$ gives rise to a pca that we 
will denote by $\G$. A proof that $\G$ is indeed a pca can be found in 
Scott~\cite{Scott}.\footnote{Scott does not use the terminology of pcas, 
but all the ingredients are there. Theorem~2.3 (p173) is the combinatory 
completeness of $\G$, and by Theorem~2.6 the combinators $\mathrm{k}$ and $\mathrm{s}$ 
can be defined using c.e.\ sets.}

Following Soare~\cite{Soare}, we will use $\E$ to denote the class of 
c.e.\ sets. For any set $X \subseteq \omega$ we will use
$\G^X$ to denote the least class of sets that contains $X$ and all c.e.\ sets
and is closed under application.
Note that this is not the same as the $X$-c.e.\ sets;
for example, for any c.e.\ set $X$, $\G^X = \E$.
For every $X$, $\G^X$ with the application \eqref{def:app}
is again a pca, since the combinators $\mathrm{s}$ and $\mathrm{k}$ of $\G$ 
can be taken in~$\E$ (cf.\ Scott~\cite{Scott}.)

The application operator \eqref{def:app} and the pcas $\G^X$ are closely connected 
with the structure of enumeration degrees 
(cf.\ Odifreddi~\cite[Chapter XIV]{OdifreddiII}).
Recall that $X$ is \emph{$e$-reducible\/} to $Y$ ($X \le_e Y$) if for some
c.e.\ relation $R$,
\[
  x \in X \Leftrightarrow \exists u (R(x, u) \wedge D_u \subseteq Y).
\]
Note that this is the case exactly when for some c.e.\ set $A$, $A \cdot Y = X$.
The set $\G^Y$ thus contains exactly the sets $X$ that are $e$-reducible to $Y$.
This connection is further discussed in Scott~\cite{Scott}.

Embeddings into graph models have previously been studied by Bethke
in~\cite[Chapter 6]{Bethke}, but the proof employed a different graph pca:
given a set $A$, Bethke (cf.~\cite[Chapter 3, Definition 0.2]{Bethke})
defines a family of sets
\begin{align*}
  G_0(A) &= A\\
  G_{n+1}(A) &= G_n(A) \cup \compr{(B, b)}{B \in \mathcal{P}^{\mathrm{fin}}(G_n(A)), b \in G_n(A)}\\
  G(A) &= \bigcup_{n \in \omega} G_n(A)
\end{align*}
and defines the application operator on $\mathcal{P}(G(A))$ by
\[
  X \cdot Y = \compr{b}{\exists B \subseteq Y.\, (B, b) \in X}.
\]

This gives rise to a pca, and Bethke showed that every pca $\A = (A, \cdot)$ embeds
into $\mathcal{P}(G(A))$.
In contrast, our result shows that countable pcas can be embedded into Scott's graph model $\G$.

\begin{theorem}
    \label{thm:count_to_G}
    Let $\A$ be a pca with a partial numbering $\gamma : \omega \to \A$ and let
    \begin{equation}
        X = \bigset{ \tuple{n, m, k} \mid n, m, k \in \omega \wedge \gamma(n) \cdot \gamma(m) = \gamma(k) }.
    \end{equation}
    Then $\A$ is embeddable in $\G^X$ and hence in $\G$.
\end{theorem}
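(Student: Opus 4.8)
The plan is to construct an embedding $f : \A \hookrightarrow \G^X$ by encoding each element $a \in \A$ together with a choice of $\gamma$-code into a subset of $\omega$, in such a way that Scott application simulates the application in $\A$. First I would fix, for every $a \in \A$, a $\gamma$-code $n_a$ (using the axiom of choice), and then define $f(a)$ to be a set that carries two pieces of information: a ``tag'' recording the element $a$ (via its chosen code $n_a$) and the graph of the application operator restricted to what is needed to compute products. Concretely, I expect to set up a single c.e.\ (in fact, $X$-decidable) ``apply'' set $E \in \G^X$ together with a coding of codes as singletons, and let $f(a) = \set{\tuple{0, n_a}} \cup \set{\tuple{\tuple{1,m}, u} \mid D_u = \set{\tuple{0,m}} \text{ and } \ldots}$, so that $E \cdot f(a) \cdot f(b)$ reads off $n_a$ and $n_b$, consults $X$ to find a code $k$ of $\gamma(n_a)\gamma(n_b)$, and returns $f(\gamma(n_a)\gamma(n_b)) = f(ab)$. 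The role of $X$ as the oracle is exactly to make the lookup ``$\gamma(n)\gamma(m) = \gamma(k)$'' available, so that the required set lies in $\G^X$, i.e.\ is $e$-reducible to $X$.

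The key steps, in order, are: (1) define the choice function $a \mapsto n_a$ and check that $f$ is well-defined and injective --- injectivity follows because $f(a)$ determines $n_a$ hence $\gamma(n_a) = a$; (2) exhibit an explicit element $\mathrm{ap} \in \E \subseteq \G^X$ such that for all $a, b$ with $ab\terminates$, $\mathrm{ap} \cdot f(a) \cdot f(b) = f(ab)$, using the definition \eqref{def:app} of Scott application to peel off the tag components; (3) verify that each $f(a)$ is genuinely a member of $\G^X$, i.e.\ that $f(a) \le_e X$, which is where the definition of $X$ is used --- the set $f(a)$ should be obtainable by applying a c.e.\ set to $X$, so one writes down the c.e.\ relation witnessing $f(a) \le_e X$; (4) conclude $\A \hookrightarrow \G^X$, and since $\G^X \subseteq \G$ (every $\G^X$-element is a subset of $\omega$ and the inclusion preserves application, as application is computed the same way), also $\A \hookrightarrow \G$.

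There is a subtlety I would have to handle carefully in step (2): Scott application $\mathrm{ap} \cdot f(a) \cdot f(b)$ produces the union over all finite subsets $D_u \subseteq f(a)$ of the relevant outputs, so I must design the tagging so that only the ``correct'' finite pieces --- the singleton carrying $n_a$ --- contribute, and spurious finite subsets contribute nothing or only consistent information. A clean way is to make $f(a)$ ``downward-directed'' in the relevant coordinate, or simply to have $\mathrm{ap}$ look only at singletons $D_{2^j}$ of the tagged form; since for every $a$ there is a unique element of $f(a)$ of the form $\tuple{0, n}$, this disambiguates $n_a$. I would then similarly arrange that $f(ab)$, as a set, is exactly what gets produced, by defining $f$ via the same template for all elements simultaneously: $f(c) = \compr{\tuple{0, n_c}}{} \cup \compr{\tuple{\tuple{1, m}, u}}{\exists k.\, \tuple{m, k, n_c} \in X',\ D_u \ni \tuple{0,m}}$ for a suitable reindexing $X'$ of $X$, so that feeding $f(a)$ then $f(b)$ into $\mathrm{ap}$ reconstructs precisely $f(ab)$.

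The main obstacle I expect is step (3) combined with the self-referential flavour of the definition of $f$: the sets $f(a)$ must lie in $\G^X$, yet they are defined in terms of all products $\gamma(n)\gamma(m)$, and $\G^X$ is the \emph{least} class containing $X$ and $\E$ closed under application, not the class of $X$-c.e.\ sets (as the paper explicitly warns). So I cannot simply say ``$f(a)$ is $X$-c.e.''; I must verify $f(a) \le_e X$, i.e.\ exhibit an actual c.e.\ relation $R$ with $x \in f(a) \iff \exists u(R(x,u) \wedge D_u \subseteq X)$, and by the remark preceding the theorem this is equivalent to $f(a) = A_a \cdot X$ for a c.e.\ set $A_a$ --- but then I also need $A_a \in \E$ uniformly enough that the whole thing stays inside $\G^X$, which it does since $A_a \cdot X \in \G^X$ by closure under application. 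Getting the bookkeeping of pairing codes right so that this c.e.\ relation genuinely exists --- in particular so that the ``tag'' coordinate $\tuple{0, n_a}$ is enumerable from $X$ (it is, since $n_a$ with $\tuple{n_a, n_{\mathrm{i}}, n_a} \in X$ or similar can be searched for, using that $\gamma(n_{\mathrm i})$ acts as identity) --- is the fiddly heart of the argument, but it is a routine enumeration-operator construction once the template for $f$ is fixed.
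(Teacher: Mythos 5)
There is a genuine gap, and it sits at the heart of your step (2). The notion of embedding used here requires $f(a)\cdot f(b) = f(ab)$ for the \emph{bare} Scott application, not $\mathrm{ap}\cdot f(a)\cdot f(b) = f(ab)$ for some auxiliary element $\mathrm{ap}\in\E\subseteq\G^X$. What you describe --- a set $f(a)$ carrying a tag $\tuple{0,n_a}$ plus one layer of application data, decoded by an external combinator --- would establish that $a \mapsto \set{f(a)}$ is an applicative morphism in Longley's sense, which is weaker and does not by itself yield an embedding. Concretely, with your template $f(c) = \set{\tuple{0,n_c}}\cup\compr{\tuple{\tuple{1,m},u}}{\dots}$, the set $f(a)\cdot f(b)$ computed by \eqref{def:app} consists only of elements of the form $\tuple{1,m}$ (tags), so it cannot equal $f(ab)$, which must itself contain a tag \emph{and} the application data needed for the next multiplication. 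To get $f(a)\cdot f(b)=f(ab)$ on the nose, $f(a)$ must contain, for every code $n$ of $b$ and every $y\in f(ab)$, a pair $\tuple{y,u}$ with $D_u$ a finite piece of $f(b)$ identifying $b$; since $f(ab)$ is itself of this form, the definition has to be stratified by depth. This is exactly the recursive hierarchy the paper uses, namely $Y_{a,0}=\compr{n^*}{\gamma(n)=a}$ and $Y_{a,i+1}=\compr{\tuple{y,2^{n^*}}}{n\in\omega,\ y\in Y_{a\cdot\gamma(n),i}}$ with $f(a)=\bigcup_{i}Y_{a,i}$, where $n^*$ is a marked copy of $n$ kept disjoint from the output codes; your two-layer template is missing all levels beyond the first.

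A secondary problem concerns your reliance on a choice function $a\mapsto n_a$, which endangers step (3). The application part of $f(a)$ must recognize the tag of an arbitrary $f(b)$, hence must refer to the chosen codes of \emph{all} elements of $\A$, and a choice function selecting one code per element is in general not enumerable from $X$ (picking, say, the least code requires negative information). The paper sidesteps this by tagging $f(a)$ with \emph{all} $\gamma$-codes of $a$: given any single code of $a$, the set of all its codes is enumeration-reducible to $X$ via a fixed code $e$ of the identity combinator $\mathrm{i}$ (since $\tuple{e,n,k}\in X$ iff $\gamma(n)=\gamma(k)$), and this also repairs your proposed search for the tag, which as written (looking for $n$ with $\tuple{n,n_{\mathrm i},n}\in X$ or similar) enumerates essentially all of $\dom(\gamma)$ rather than the codes of $a$. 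Your steps (1), the stated goal of (3), and (4) are otherwise sound, and your awareness that $\G^X$ consists exactly of the sets $e$-reducible to $X$ is the right way to organize the verification.
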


\begin{proof}
    We construct an embedding $f : \A \to \G^X$. 
    Recall that $D_{2^n} = \set{n}$ in our coding.
    For every $n \in \omega$, let $n^* = \tuple{n, 1}$.
    We will ensure that for all $a \in \A$, $0 \not \in f(a)$
    and for all $n \in \omega$, if $\gamma(n) = a$ then $n^* \in f(a)$.

    We define an $(\A, \omega)$-indexed family of sets $Y_{a, i}$:
    \begin{align*}
        Y_{a, 0} &= \set{ n^* \mid \gamma(n) = a }\\
        Y_{a, i+1} &= \bigset{ \tuple{ y, 2^{n^*} }
            \mid n \in \omega, y \in Y_{a \cdot \gamma(n), i} }
    \end{align*}

    For $a \in \A$, let $f(a) = \bigcup_{i \in \omega} Y_{a, i}$.

    Recall that every pca contains an element $\mathrm{i}$ such that $ia = a$ for all $a$.
    By fixing a code $e$ such that $\gamma(e) = \mathrm{i}$, we can use $X$ to enumerate all
    pairs $n, k$ such that $\gamma(n) = \gamma(k)$, hence $Y_{a, 0} \le_e X$.
    In particular, we can find a code for such an enumeration effectively from a code for $a$.
    By induction, we can use $X$ to effectively find a code for $Y_{a, i}$ for every $a \in \A$ and $i \in \omega$,
    hence $f(a) \le_e X$.

    We will now verify that this construction works.
    For every $a \in \A$, every element of $f(a)$ is of the form $n^* = \tuple{1, n}$ with $\gamma(n) = a$
    or $\tuple{y, 2^{n^*}}$ for some $n, y \in \omega$.
    These are all tuples unequal to $\tuple{0, 0}$, hence $0 \not \in f(a)$.
    It follows that $f(a) \cdot f(b) = f(a) \cdot \set{n^* \mid n \in \gamma^{-1}(a) }$,
    since all elements of $Y_{b, i}$ for $i > 0$ are not of the form $n^*$.

    Let $a, b, c \in \A$ such that $a \cdot b = c$.
    By construction, the following are equivalent:
    \begin{align*}
        &y \in f(a) \cdot \set{n^* \mid n \in \gamma^{-1}(b) }\\
        \exists i, n \in \omega.\, &\gamma(n) = b \wedge \tuple{y, 2^{n^*}} \in Y_{a, i+1}\\
        \exists i, n \in \omega.\, &\gamma(n) = b \wedge y \in Y_{a \cdot \gamma(n), i} = Y_{a \cdot b, i} = Y_{c, i}\\
        &y \in f(c),
    \end{align*}
    as desired.
\end{proof}

For the special case of $\E = \G^\emptyset$, we in particular get the following result.

\begin{corollary}
  \label{cor:K1_to_E}
  $\K_1$ is embeddable into $\E$. 
\end{corollary}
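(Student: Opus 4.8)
The plan is to invoke Theorem~\ref{thm:count_to_G} with $\A = \K_1$ and $\gamma$ the identity numbering $\omega \to \K_1$, and then to observe that the auxiliary oracle set produced by that theorem is computably enumerable, so that $\G^X$ collapses to $\E$.

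First I would unwind what Theorem~\ref{thm:count_to_G} gives in this case. With $\gamma = \mathrm{id}$ we have $\gamma(n)\cdot\gamma(m) = \gamma(k)$ exactly when $\varphi_n(m)\terminates = k$, so the set
\[
  X = \bigset{ \tuple{n,m,k} \mid \gamma(n)\cdot\gamma(m) = \gamma(k) }
\]
is precisely the graph of Kleene application on $\omega$. This graph is c.e.: to enumerate it, dovetail all computations $\varphi_n(m)$ and output $\tuple{n,m,k}$ whenever one of them halts with value $k$. Theorem~\ref{thm:count_to_G} then yields an embedding $\K_1 \hookrightarrow \G^X$.

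Second, I would use the remark made just before Theorem~\ref{thm:count_to_G} (in the discussion of $\G^X$) that for any c.e.\ set $X$ one has $\G^X = \E$, since a c.e.\ set contributes nothing beyond the c.e.\ sets already present in every $\G^X$. Combining this with the embedding above gives $\K_1 \hookrightarrow \G^X = \E$, which is the claim.

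There is essentially no obstacle here: the content is entirely in Theorem~\ref{thm:count_to_G}, and the only point requiring a (trivial) verification is that the graph of the partial computable application is c.e., together with the already-noted fact that adjoining a c.e.\ set to $\E$ changes nothing. If one wanted to be fully explicit one could instead spell out the resulting embedding $f:\K_1\to\E$ directly from the construction in the proof of Theorem~\ref{thm:count_to_G}, checking that each set $Y_{n,i}$ in that construction is uniformly c.e.\ (using a code $e$ with $\varphi_e = \mathrm{id}$ as the witness for $\mathrm{i}$), but this is subsumed by the general theorem.
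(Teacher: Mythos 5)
Your proposal is correct and is essentially identical to the paper's own proof: both apply Theorem~\ref{thm:count_to_G} to $\K_1$ with the identity numbering, note that the resulting set $X$ (the graph of Kleene application) is c.e., and conclude via $\G^X = \E$. No further comment is needed.
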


\begin{proof}
  Taking
  \[
    X = \set{ \tuple{n, m, k} \mid n, m, k \in \omega \wedge \vph(m) = k },
  \]
  by the theorem, there exists an embedding $\K_1 \to \G^X$.
  Since $X$ is c.e., $\G^X = \E$, as desired.
\end{proof}

Since we know that $\K_1 \hookrightarrow \K_2^\eff \hookrightarrow \B^\eff$,
Corollary~\ref{cor:K1_to_E} can also be obtained using the following,
more powerful, result.

\begin{theorem}
  \label{thm:B_to_G}
  There is an embedding $\B \hookrightarrow \G$, which restricts to an embedding
  $\B^X \hookrightarrow \G^{X \oplus \cmp{X}}$ for all $X \subseteq \omega$.
\end{theorem}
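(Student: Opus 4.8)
The plan is to build a single embedding $f : \B \hookrightarrow \G$ directly from the coding in equation~\eqref{alt}, checking afterward that it respects the two restrictions. The central idea is that an element $\theta \in \B$ is a partial function $\omega \to \omega$, and such a partial function is naturally coded in the graph model by its graph, but we must be careful: application in $\B$ uses the oracle $\theta \oplus \mu$ via a Turing functional, which queries \emph{both} membership and non-membership of the oracle, whereas $\G$-application \eqref{def:app} only has access to positive information about its argument. So a na\"ive ``$f(\theta) = \text{graph of }\theta$'' will not respect application, because $\G$ cannot see where $\theta$ is undefined. The fix is to let $f(\theta)$ encode not the graph of $\theta$ itself but a (monotone) enumeration operator that, on input (an approximation to) the graph of $\mu$, produces the graph of $\theta \cdot \mu$. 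Concretely, since $\Phi^{\,g \oplus h}_{g(0)}$ is computed by a Turing machine, and a Turing machine with oracle $g \oplus h$ can be simulated by an enumeration operator once we feed it the \emph{join of the graph and the complement's graph} of $g$ and $h$, the strategy is to have $f(\theta)$ carry two pieces of data: a positive description of $\theta$ and a positive description of $\cmp{\dom \theta}$-style negative information, bundled so that $\G$-application can run the simulation.

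More precisely, here are the steps I would carry out. First, fix an effective scheme that, from a number $e$ and finite approximations $\sigma$ (a finite partial function, coded) to an oracle, runs $\Phi_e$ and outputs the finitely many pairs $(k, \Phi_e^{\sigma}(k))$ that have converged using only $\sigma$ as oracle; this is an enumeration operator in the oracle's graph. Second, define $f(\theta)$ to be a subset of $\omega$ that simultaneously codes: the graph of $\theta$ (as pairs $\tuple{0, \tuple{k, \theta(k)}}$, say), the ``anti-graph'' recording pairs $\tuple{1, k}$ for $k \notin \dom\theta$, and the index $\theta(0)$ (or rather the ``program'' $g(0)$) tagged separately. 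Third, show that with the application \eqref{def:app}, $f(\theta) \cdot f(\mu)$ recovers exactly $f(\theta \cdot \mu)$ when $\theta\cdot\mu\terminates$: the set $f(\theta)$ contains, for each finite oracle-approximation $u$ built from finite pieces of $f(\mu)$, the pair $\tuple{\text{output pair}, u}$, so that feeding $Y = f(\mu)$ produces precisely the pairs that $\Phi^{\theta\oplus\mu}_{\theta(0)}$ eventually outputs — which, since $\theta\cdot\mu$ is total, is the full graph of $\theta\cdot\mu$, together with the correctly-copied anti-graph and program tag. Injectivity is immediate because the graph of $\theta$ is recoverable from $f(\theta)$. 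Fourth, for the relativized statement: observe that $\theta \in \B^X$ means $\theta$ is partial $X$-computable, so its graph is $X$-c.e., i.e. $e$-reducible to $X$, and its anti-graph $\set{k \mid k\notin\dom\theta}$ is $\Pi^0_1(X)$, i.e. $e$-reducible to $\cmp X$; hence $f(\theta) \le_e X \oplus \cmp X$, so $f(\theta) \in \G^{X\oplus\cmp X}$. Conversely the construction is uniform enough that nothing outside $\G^{X\oplus\cmp X}$ is used.

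The main obstacle I anticipate is exactly the handling of negative oracle information. In $\K_2$ and $\B$, a computation $\Phi^{g\oplus h}_{g(0)}(k)$ may converge \emph{because} it learns $j \notin \dom h$ (more precisely: in $\K_2$ oracle queries are to total functions, but in $\B$, when $\mu$ is partial, the relevant subtlety is that the simulating machine must be designed so it never needs a value of $\mu$ outside $\dom\mu$ when $\theta\cdot\mu$ is defined — this is the ``sequential'' character of van Oosten's model). I would need to argue that, since $\B$-application is \emph{defined} only when the resulting function is total and the computation is ``honest'' (queries only converging values of its oracle), the enumeration operator built from $\Phi_{\theta(0)}$ does in fact produce the whole graph of $\theta\cdot\mu$ from the positive data in $f(\mu)$ alone — so the anti-graph component of $f(\mu)$ is actually needed only to make the simulation \emph{detect} divergence-of-queries correctly, and carrying it along as $e$-reducible-to-$\cmp X$ data is what forces the $X \oplus \cmp X$ in the relativized version rather than just $X$. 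Getting this bookkeeping exactly right — which pieces of $f(\mu)$ feed the simulation positively, and verifying the equivalence chain $y \in f(\theta)\cdot f(\mu) \Leftrightarrow y \in f(\theta\cdot\mu)$ line by line as in the proof of Theorem~\ref{thm:count_to_G} — is the delicate part; everything else is routine coding.
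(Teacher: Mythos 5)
There is a genuine gap, and it sits exactly where you flagged the delicacy: the anti-graph. If $f(\theta)$ contains a component coding $\cmp{\dom\theta}$, then for $f$ to be an embedding, $f(\theta)\cdot f(\mu)$ must reproduce the anti-graph of $\theta\cdot\mu$, i.e.\ the set of $k$ with $\Phi^{\theta\oplus\mu}_{\theta(0)}(k)\diverges$. But the map $Y\mapsto f(\theta)\cdot Y$ given by \eqref{def:app} is monotone and finitely supported: every element of the output must be secured by some finite $D_u\subseteq f(\mu)$, and is then also an element of $f(\theta)\cdot f(\mu')$ for every $\mu'$ with $D_u\subseteq f(\mu')$. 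Divergence after infinitely many answered oracle queries is not secured by any finite amount of positive information about $\mu$. Concretely, let $\theta$'s program on input $k$ query $\mu(0),\mu(1),\dots$ in order, halting with output $0$ as soon as some answer equals $1$, and let $\mu$ be the constant $0$ function. Then $\dom(\theta\cdot\mu)=\emptyset$, so the anti-graph element for $k$ must lie in $f(\theta)\cdot f(\mu)$, secured by a finite set $D_u$ of facts $\mu(i)=0$ for $i$ in some finite $F$ (the anti-graph of $\mu$ is empty, so $D_u$ can contain nothing else about $\mu$). Taking $\mu'$ with $\mu'(i)=0$ for $i\le\max F$ and $\mu'(\max F+1)=1$ gives $D_u\subseteq f(\mu')$ but $\theta\cdot\mu'$ total, so $f(\theta)\cdot f(\mu')\neq f(\theta\cdot\mu')$. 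Hence no choice of $f(\theta)$ whose output format includes the anti-graph can respect application. Relatedly, your appeal to ``since $\theta\cdot\mu$ is total'' is a slip: that is $\K_2$; in $\B$ application is always defined and the result is an arbitrary partial function, so the embedding condition must hold for all pairs and partial outputs cannot be avoided.

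The paper's construction evades this by putting only positive information into $f(\theta)$: starred graph pairs $\tuple{a,b}^*$ for $\theta(a)=b$, together with pairs $\tuple{x,u}$ where $u$ codes a finite set of starred pairs (hence a finite subfunction $\psi(u)$ of the prospective argument) and $x\in f_n(\tau)$ for some finite $\tau\subseteq\sigma\cdot\psi(u)$ with $\sigma\subseteq\theta$ finite. This makes $f$ monotone in the subfunction order, and the verification rests on the monotonicity and finite support of $\B$-application ($(\theta\cdot\rho)(k)$ depends only on finite parts of $\theta$ and $\rho$), which is precisely the kind of continuity that \eqref{def:app} can express. The oracle $\cmp X$ then enters for a different reason than you propose: not because $f(\theta)$ carries $\cmp{\dom\theta}$, but because enumerating $f(\theta)$ requires enumerating the graphs of the partial $X$-computable function $\theta$ and of the finite applications $\sigma\cdot\psi(u)$, and an $X$-c.e.\ set is in general enumeration-reducible to $X\oplus\cmp X$ but not to $X$. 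Your positive-part idea (simulating $\Phi_{\theta(0)}$ against finite oracle approximations) is sound for producing the graph of $\theta\cdot\mu$; the proof can be repaired by dropping the anti-graph from $f(\theta)$ altogether and iterating the construction over finite subfunctions as in Theorem~\ref{thm:count_to_G}, which is what the paper does.
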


\begin{proof}
  We perform a construction similar to the one above, but use the fact that elements
  of $\B$ can be approximated by finite sequences.

  We construct an embedding $f : \B \hookrightarrow \G$.
  Given $n \in \omega$, let $n^*$ be $\tuple{n, 1}$.
  As before, we ensure that $0 \not\in f(\theta)$ for all $\theta \in \B$.
  Given $u \in \omega$, let $\psi(u)(a) = b$ if there is a unique $b$ such that
  $\tuple{a, b}^* \in D_u$ and let it be undefined otherwise.
  Denote the set of codes of finite sets of elements of the form $n^*$ by $C$, that is,
  \[ C = \compr{u \in \omega}{\forall k \in D_u.\,\exists n.\, k = n^*}.\]

  We first define $f$ in stages on the finite elements\footnote{That is: partial functions with finite domain.} of $\B$,
  and then define the embedding of $\theta \in \B$ as the union of the embeddings of its finite subfunctions.
  For $\sigma \in \B$ finite and $\theta \in \B$ arbitrary, define
  \begin{align*}
    f_0(\sigma) &= \compr{\tuple{a, b}^*}{\sigma(a) = b}\\
    f_{n+1}(\sigma) &= \compr{\tuple{x, u}}{x \in f_n(\tau) \wedge u \in C \wedge \tau \subseteq \sigma \cdot \psi(u)}\\
    f(\sigma) &= \bigcup_{n \in \omega} f_n(\sigma)\\
    f(\theta) &= \bigcup_{\substack{\sigma \subseteq \theta \\ \text{$\sigma$ finite}}} f(\sigma).
  \end{align*}

  By induction on the index, it is easy to show that if $\sigma \subseteq \tau$ then $f_n(\sigma) \subseteq f_n(\tau)$,
  hence the last line of the definition is also consistent for finite $\theta$.

  Note that $f$ is injective since for all $n, k \in \omega$, $\tuple{n, k}^* \in f(\theta)$
  iff $\theta(n) = k$ allowing us to reconstruct $\theta$ from $f(\theta)$.
  Furthermore, for all $u \in C$ and $\theta \in \B$,
  if $D_u \subset f(\theta)$ then $\psi(u) \subseteq \theta$.

  To verify this construction is correct, let $\theta, \rho \in \B$.
  Suppose $x \in f(\theta) \cdot f(\rho)$.
  Then there is some $\tuple{x, u} \in f(\theta)$ such that $D_u \subseteq f(\rho)$.
  By construction, $u \in C$ and there are $\sigma \subseteq \theta$ and $\tau$ such that $\tau \subseteq \sigma \cdot \psi(u)$ and $x \in f(\tau)$.
  Since $D_u \subset f(\rho)$ it follows that $\psi(u) \subseteq \rho$ and hence $\tau \subseteq \sigma \cdot \psi(u) \subseteq \theta \cdot \rho$.
  We thus have $x \in f(\tau) \subseteq f(\theta \cdot \rho)$, as desired.

  Conversely, suppose $x \in f(\theta \cdot \rho)$.
  In particular, there is some $\tau \subseteq \theta \cdot \rho$ and $n \in \omega$ such that $x \in f_n(\tau)$.
  There exists some $\sigma \subseteq \theta$ and a $u \in C$ such that $\tau \subseteq \sigma \cdot \psi(u)$;
  this holds since for any $k \in \omega$, $(\theta \cdot \rho)(k)$ only requires a finite portion of $\theta$ and $\rho$ to compute,
  and $\tau$ is finite.
  It follows that $\tuple{x, u} \in f(\theta)$ and hence $x \in f(\theta) \cdot f(\rho)$.

  This concludes the verification.
  It remains to show that this embedding restricts to an embedding $\B^X \to \G^{X \oplus \cmp{X}}$.
  Let $\theta \in \B^X$.  We show that $f(\theta) \le_e X \oplus \cmp{X}$.

  We show that for every $n \in \omega$, there exists a computable function $g_n$ such that for every code $e$ of a finite
  partial $X$-computable function $\sigma$,
  \[
    f_n(\sigma) = \Psi^{X \oplus \cmp{X}}_{g_n(e)}.
  \]

  This is sufficient because in order to enumerate $f(\theta)$ for an arbitrary $\theta \in \B$, it suffices to enumerate the
  codes $e$ of its finite subfunctions and take the union of $g_n(e)$ for each $n$.
  This can be done effectively, since given a code $d$ of $\theta$ we can effectively find all such $e$.  

  We proceed to prove the claim by induction on $n$.
  In the base case, we can enumerate the graph of $\sigma$ given the code using $X \oplus \cmp{X}$.
  For the inductive case of $n+1$, we can enumerate all $u \in C$, all finite partial functions $\tau$, and use $g_n$ to enumerate all $x \in f_n(\tau)$.
  By enumerating $\tau$ in a way that allows us to decide its domain, the relation $\tau \subseteq \sigma \cdot \psi(u)$ can be enumerated.
\end{proof}

By Theorem~\ref{thm:B_to_G} we have that 
$\K_2 \hookrightarrow \B \hookrightarrow \G$, since the 
first embedding is just given by the inclusion. 
At this moment we do not know the status of any of the embeddings in the 
other direction.

\begin{question}
  Does there exist an embedding $\G \hookrightarrow \B$ or $\B \hookrightarrow \K_2$?
\end{question}

Theorem~\ref{thm:B_to_G} immediately gives the following result about the c.e.\ sets $\E$.

\begin{corollary}
  \label{cor:K2eff_to_G}
  There is a sequence of embeddings 
  $\K_1 \hookrightarrow \K_2^\eff \hookrightarrow \B^\eff \hookrightarrow \E$.
\end{corollary}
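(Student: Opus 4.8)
The plan is to chain together embeddings that have already been established earlier in the excerpt, so the proof is essentially a bookkeeping exercise. The target claim is the sequence of embeddings $\K_1 \hookrightarrow \K_2^\eff \hookrightarrow \B^\eff \hookrightarrow \E$, and each of the three arrows is available from a prior result specialized to $X = \emptyset$.

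First I would note that $\K_1 \hookrightarrow \K_2^\eff$ is precisely Theorem~\ref{thm_ST:2} instantiated at $X = \emptyset$, since $\K_2^\emptyset = \K_2^\eff$ by the notational convention introduced at the start of Section~\ref{sec:K2} (the superscript $\eff$ being synonymous with the empty oracle). Next, $\K_2^\eff \hookrightarrow \B^\eff$ is given by the inclusion map: the application in $\K_2^X$ and $\B^X$ is defined identically (up to the totality requirement), as remarked just after equation~\eqref{alt}, and every total $X$-computable function is in particular a partial $X$-computable function, so at $X = \emptyset$ the inclusion $\K_2^\eff \subseteq \B^\eff$ is an embedding. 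Finally, $\B^\eff \hookrightarrow \E$ follows from Theorem~\ref{thm:B_to_G}: that theorem gives an embedding $\B^X \hookrightarrow \G^{X \oplus \cmp{X}}$, and taking $X = \emptyset$ yields $\B^\eff \hookrightarrow \G^{\emptyset \oplus \omega} = \G^{\emptyset} = \E$, where we use that $\emptyset \oplus \omega$ is computable and that $\G^Z = \E$ for any c.e.\ set $Z$ (as observed in Section~\ref{sec:G}, where it is noted that $\G^X = \E$ whenever $X$ is c.e.).

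Composing these three embeddings gives the desired sequence. I do not anticipate any real obstacle here; the only point requiring a moment's care is the identification $\G^{\emptyset \oplus \cmp{\emptyset}} = \E$, i.e.\ recognizing that $\cmp{\emptyset} = \omega$ and that adjoining a computable (indeed c.e.) set to the generators of the graph model does not enlarge it beyond the c.e.\ sets. Since embeddings compose — the composition of two injective application-preserving maps is again injective and application-preserving — the corollary is immediate. I would keep the proof to essentially one or two sentences, citing Theorem~\ref{thm_ST:2}, the inclusion, and Theorem~\ref{thm:B_to_G} with $X = \emptyset$.
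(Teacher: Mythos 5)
Your proof is correct and matches the paper's own argument exactly: cite Theorem~\ref{thm_ST:2} for the first arrow, the inclusion for the second, and Theorem~\ref{thm:B_to_G} with $X = \emptyset$ for the third, using $\G^{\emptyset \oplus \cmp{\emptyset}} = \G^\emptyset = \E$. Your extra remark that $\emptyset \oplus \cmp{\emptyset}$ is c.e.\ and hence does not enlarge the graph model is a correct justification of that identification.
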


\begin{proof}
  The first embedding exists by Theorem~\ref{thm_ST:2}, 
  the second is by inclusion, and the third is immediate from Theorem~\ref{thm:B_to_G} 
  since $\G^{\emptyset \oplus \cmp{\emptyset}} = \G^\emptyset = \E$.
\end{proof}

In fact, the embedding given by Theorem~\ref{thm:B_to_G} is tight in the following sense.

\begin{theorem}
  \label{thm:B_to_G_optimal}
  Let $X, Y \subseteq \omega$.
  There exists an embedding $\B^X \hookrightarrow \G^Y$ if and only if $X \oplus \cmp{X} \le_e Y$.
\end{theorem}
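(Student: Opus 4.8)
The plan is to prove the biconditional by establishing the two directions separately, with the forward direction being the substantive one.

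First, for the ``if'' direction, suppose $X \oplus \cmp{X} \le_e Y$. Then every set that is $e$-reducible to $X \oplus \cmp{X}$ is also $e$-reducible to $Y$, so $\G^{X \oplus \cmp X} \subseteq \G^Y$. Since the application operator \eqref{def:app} on $\mathcal{P}(\omega)$ is the same in both, the inclusion $\G^{X \oplus \cmp X} \hookrightarrow \G^Y$ is an embedding. Composing with the embedding $\B^X \hookrightarrow \G^{X \oplus \cmp X}$ from Theorem~\ref{thm:B_to_G} yields the desired embedding $\B^X \hookrightarrow \G^Y$.

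For the ``only if'' direction, I would follow the decider-based strategy used throughout the paper, but I need a numbering of $\G^Y$ that is effective relative to an appropriate oracle and has a tractable inequivalence relation. The natural choice is $\gamma(n) = \Psi_n^Y$, the $n$-th enumeration operator applied to $Y$, restricted to indices where this lands in $\G^Y$; this surjects onto $\G^Y$ since those are exactly the sets $e$-reducible to $Y$. Crucially, since $Y$-enumerability is preserved under the application \eqref{def:app}, this numbering is effectively $Y$-pca-valued in the appropriate sense, and the relation ``$\gamma(n) \neq \gamma(m)$'' is witnessed by finding an element in one enumeration but (provably) not the other --- the point where $\B^X$ forced us to pass to incomparable elements via Lemma~\ref{lem:monotonic_not_chain} recurs here, since graph-model application is monotone under $\subseteq$. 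So by Lemma~\ref{lem:monotonic_not_chain} applied to $\B^X$ with the subset order (which makes $\B^X$ application monotone), given an embedding $f : \B^X \hookrightarrow \G^Y$ we may pick $\theta \in \B^X$ with $f(\emptyset)$ and $f(\theta)$ incomparable --- actually more carefully, we want to extract from the embedding enough information to enumerate both $X$ and $\cmp X$ from $Y$.

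The key step is to run the construction of Theorem~\ref{thm:B_decider} (or Theorem~\ref{thm:B_ce_decider}) to obtain, from the embedding $f : \B^X \hookrightarrow \G^Y$, a $Y$-computable $(f(\theta_\bot), f(\theta_\top))$-decider $d$ for $X$ where $f(\theta_\bot), f(\theta_\top)$ are incomparable elements of $\G^Y$. Then, just as in the proof of Theorem~\ref{thm:B_embed_turing}, pick witnesses: a point $k_\bot$ with $k_\bot \in f(\theta_\bot) \setminus f(\theta_\top)$ and $k_\top \in f(\theta_\top) \setminus f(\theta_\bot)$ (possible by incomparability). For each $n$, $\gamma(d(n))$ equals $f(\theta_\bot)$ if $n \notin X$ and $f(\theta_\top)$ if $n \in X$. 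Since $\gamma(d(n)) = \Psi^Y_{d(n)}$ is a set we can \emph{enumerate} using $Y$, we enumerate it and wait to see either $k_\bot$ appear (certifying $n \notin X$, hence a positive enumeration fact about $\cmp X$) or $k_\top$ appear (certifying $n \in X$). Exactly one happens, and this gives $X \le_e Y$ and $\cmp X \le_e Y$ simultaneously, i.e.\ $X \oplus \cmp X \le_e Y$. I expect the main obstacle to be the bookkeeping in the first paragraph of the argument: verifying that the numbering $n \mapsto \Psi^Y_n$ restricted to $\G^Y$ is $Y$-effectively pca-valued in the precise sense required and, more delicately, confirming that the Theorem~\ref{thm:B_decider} construction yields elements that are genuinely incomparable in $\G^Y$ (not merely distinct) --- this is where Lemma~\ref{lem:monotonic_not_chain} must be invoked with care, since incomparability, not just inequality, is what powers the enumeration argument.
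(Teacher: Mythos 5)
Your proposal is correct and follows essentially the same route as the paper: the inclusion $\G^{X\oplus\cmp X}\subseteq\G^Y$ composed with Theorem~\ref{thm:B_to_G} for the ``if'' direction, and for the ``only if'' direction Lemma~\ref{lem:monotonic_not_chain} (applied with the subset order on the \emph{target} $\G^Y$) to obtain $\theta_\bot,\theta_\top$ with $\subseteq$-incomparable images, Theorem~\ref{thm:B_decider} for the decider, and enumeration of $\Psi^Y_{d(n)}$ looking for the two separating witnesses to enumerate $X$ and $\cmp X$ simultaneously. The one point worth tightening is that for the conclusion to be an enumeration (rather than Turing) reduction, $d$ must be plain computable, which holds because the numbering $n\mapsto\Psi^Y_n$ of $\G^Y$ is effectively pca-valued.
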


\begin{proof}
  One direction follows from Theorem~\ref{thm:B_to_G} and the fact that $G^{X \oplus \cmp{X}} \subseteq G^Y$ when $X \oplus \cmp{X} \le_e Y$.

  For the other direction we turn to Theorem~\ref{thm:B_decider}.
  Suppose that $f : \B^X \hookrightarrow \G^Y$ is an embedding.
  By Lemma~\ref{lem:monotonic_not_chain}, there exist $\theta_\bot$ and $\theta_\top$ such that
  $f(\theta_\bot)$ and $f(\theta_\top)$ are not comparable with respect to the
  subset order on $\G^Y$.
  Let $d$ be a $Y\!$-computable $(f(\theta_\bot), f(\theta_\top))$-decider for $X \oplus \cmp{X}$.

  Pick $i \in f(\theta_\bot) - f(\theta_\top)$ and $j \in f(\theta_\top) - f(\theta_\bot)$.
  We can enumerate $X \oplus \cmp{X}$ now as follows: for each $n$, we enumerate
  $\Psi^Y_{d(n)}$ until we see $i$ or $j$.
  Note that $\Psi^Y_{d(n)}$ is by construction one of $f(\theta_\bot)$ or $f(\theta_\top)$.
  If we see $i$, then $\Psi^Y_{d(n)} \neq f(\theta_\top)$ and thus $n \not \in X$, so we enumerate $2i + 1$ into $X \oplus \cmp{X}$.
  If we see $j$, the converse is the case and we enumerate $2i$ instead.

  We hence see $X \oplus \cmp{X} \le_e Y$, as desired.
\end{proof}

We would like to prove an analogue of Theorem~\ref{thm:K2_embed_turing} for Scott's graph model,
with Turing reducibility replaced by $e$-reducibility.
However, there is a difficulty: the operations we can perform in $\G$ are monotone,
while an embedding $f : \G^X \to \G^Y$ need not be so.

\begin{theorem}
  \label{thm:G_decider}

  Let $X, Y \subseteq \omega$, $\A$ a pca, and $\gamma : \omega \to \A$ a $Y\!$-effectively pca-valued partial numbering.
  Given an embedding $f : \G^X \hookrightarrow \A$ and any $B_\bot \subseteq B_\top
  \in \G^X$, there is a $Y\!$-computable $(f(B_\bot), f(B_\top))$-decider $d$ for any $Z \le_e X$.
\end{theorem}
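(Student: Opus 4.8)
The plan is to run the head/tail construction of Theorem~\ref{thm:K1_decider}, but inside $\G^X$, taking advantage of the fact that in $\G$ the application is total and monotone for inclusion. Since $Z \le_e X$, the set $Z$ is itself an element of $\G^X$, and it is the element into which we encode the ``input''. As tail we take the c.e.\ set $t = \compr{\tuple{k, 2^{k+1}}}{k \in \omega}$, so that $t \cdot E = \compr{k}{k+1 \in E}$ for every $E \subseteq \omega$; iterating, $\overline{n}\,t\,Z = \compr{k}{k+n \in Z}$, a set that contains $0$ exactly when $n \in Z$.

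As head we take
\[
  h = \compr{\tuple{m, u_\emptyset}}{m \in B_\bot} \cup \compr{\tuple{m, 2^{0}}}{m \in B_\top},
\]
where $u_\emptyset$ is a code of $\emptyset$ and $D_{2^0} = \set{0}$. For any $F \subseteq \omega$ we have $D_{u_\emptyset} = \emptyset \subseteq F$ unconditionally, while $D_{2^0} = \set{0} \subseteq F$ iff $0 \in F$, so $h \cdot F = B_\bot$ when $0 \notin F$ and $h \cdot F = B_\bot \cup B_\top = B_\top$ when $0 \in F$; this last step is exactly where the hypothesis $B_\bot \subseteq B_\top$ is used. Combining, $h(\overline{n}\,t\,Z) = B_\bot$ if $n \notin Z$ and $h(\overline{n}\,t\,Z) = B_\top$ if $n \in Z$. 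I would then note that $h$, $t$, $Z$ all lie in $\G^X$: $t$ is c.e., so $t \in \E \subseteq \G^X$; $Z \in \G^X$ because $Z \le_e X$; and $h \le_e X$ since $B_\bot, B_\top \le_e X$, whence $h \in \G^X$. Thus $h(\overline{n}\,t\,Z)$ is a genuine term of $\G^X$ built from the fixed elements $h, t, Z$ together with the Church numeral $\overline{n}$, which is $\mathrm{s},\mathrm{k}$-definable uniformly in $n$.

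It remains to push this term through $f$, exactly as in Theorem~\ref{thm:K1_decider}. Since $f$ preserves application, $f(h(\overline{n}\,t\,Z)) = f(h)\bigl(f(\overline{n})\,f(t)\,f(Z)\bigr)$, with $f(\overline{n})$ obtained from $f(\mathrm{s})$ and $f(\mathrm{k})$ by the same term that defines $\overline{n}$. Fixing $\gamma$-codes for the finitely many elements $f(h)$, $f(t)$, $f(Z)$, $f(\mathrm{s})$, $f(\mathrm{k})$ and iterating $\psi$, the $\gamma$-representation of the application in $\A$, one obtains $Y$-computably a $\gamma$-code $d(n)$ of $f(h(\overline{n}\,t\,Z))$; the function $d$ is total because application in $\G$ is total, so all the relevant applications in $\A$ converge and $\psi$ is defined on every pair we feed it. By the computation above, $\gamma(d(n)) = f(B_\bot)$ if $n \notin Z$ and $\gamma(d(n)) = f(B_\top)$ if $n \in Z$, so $d$ is the required $Y$-computable $(f(B_\bot), f(B_\top))$-decider for $Z$.

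The one point that needs real care is the design of the head $h$: it can only interrogate the shifted set $\overline{n}\,t\,Z$ through a positive query of the form $D_u \subseteq F$, which is what forces the asymmetry between $B_\bot$ and $B_\top$ and explains why the statement assumes $B_\bot \subseteq B_\top$ rather than merely $B_\bot \neq B_\top$; the rest is bookkeeping parallel to the proof of Theorem~\ref{thm:K1_decider}.
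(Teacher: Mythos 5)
Your proposal is correct and follows essentially the same route as the paper: encode $Z$ as an element of $\G^X$, shift with a tail $t$ satisfying $tA = \compr{k}{k+1 \in A}$, branch with a monotone head $h$ sending $A$ to $B_\bot$ or $B_\top$ according to whether $0 \in A$, and push the term $h(\overline{n}tZ)$ through $f$ via $\psi$. Your explicit graph-model codes for $t$ and $h$, and your observation that the positivity of queries $D_u \subseteq F$ forces the hypothesis $B_\bot \subseteq B_\top$, are exactly the content of the paper's construction and of its remark following the proof.
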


\begin{proof}
  Note that $Z \in \G^X$.  
  As before, we construct elements $h$ and $t$ and use the expression $h(\overline{n}tZ)$
  to denote either $B_\bot$ or $B_\top$, depending on whether $n \in Z$.

  Define, for all $A \subseteq \omega$, $n \in \omega$
  \begin{align*}
    n \in tA &\Leftrightarrow n + 1 \in A\\
    hA &= \begin{cases}
      B_\bot & \text{if $0 \not\in A$}\\
      B_\top & \text{if $0 \in A$}
    \end{cases}.
  \end{align*}

  Now let $d(n)$ be a $\gamma$-code of $f(h(\overline{n}tZ))$,
  which can be constructed from the codes of the subexpressions using $\psi$ as usual.
\end{proof}

Note that the condition $B_\bot \subseteq B_\top$ is essential for $h$ to be well-defined,
since we cannot use the absence of $0$ in $A$ to enumerate elements into $hA$.

\begin{lemma}
  The mapping $\gamma(n) = \Psi^X_n$ is an effectively pca-valued partial numbering.
\end{lemma}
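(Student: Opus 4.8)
The plan is to verify the two defining conditions separately: that $\gamma$ is a surjective partial function onto $\G^X$, and that the application of $\G^X$ is partial computable with respect to $\gamma$ with no oracle, since we are claiming \emph{effectively} pca-valued rather than merely $X$-effectively so.

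For the first point, I would invoke the identification, recalled above, of $\G^X$ with the class of sets that are $e$-reducible to $X$. On one hand, $\Psi^X_n \le_e X$ for every $n$, so $\Psi^X_n \in \G^X$ and $\gamma$ is a well-defined map $\omega \to \G^X$; it happens to be total, which in particular makes it a partial function. On the other hand, if $B \in \G^X$ then $B \le_e X$, and since the enumeration operators enumerate exactly the $e$-reductions, $B = \Psi^X_e$ for some index $e$; hence $\gamma$ is onto. Thus $\gamma$ is a partial numbering of $\G^X$.

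For the effectivity condition, note first that application in $\G$ is total, so the hypothesis ``$\gamma(n)\gamma(m)\terminates$'' in the definition of an effectively pca-valued numbering is vacuous, and I must simply exhibit a total computable $\psi$ with $\Psi^X_n \cdot \Psi^X_m = \Psi^X_{\psi(n,m)}$ for all $n,m$. The key observation is that for fixed $n,m$ the map $A \mapsto \Psi_n(A)\cdot\Psi_m(A)$ is again an enumeration operator: unwinding the definition of $\cdot$ from~\eqref{def:app},
\[
  \Psi_n(A)\cdot\Psi_m(A) = \bigset{k \mid \ex u.\ \tuple{k,u}\in\Psi_n(A) \wedge D_u\subseteq\Psi_m(A)},
\]
and this set is enumerable from any enumeration of $A$ by running the enumerations of $\Psi_n(A)$ and $\Psi_m(A)$ in parallel: when a pair $\tuple{k,u}$ has appeared in $\Psi_n(A)$ and every element of $D_u$ has appeared in $\Psi_m(A)$, output $k$. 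Composition and union of enumeration operators are enumeration operators, and an index $\psi(n,m)$ for the resulting operator is computable uniformly from $n$ and $m$ by the enumeration-operator analogue of the $S$-$m$-$n$ theorem. Substituting $A = X$ yields $\Psi^X_n\cdot\Psi^X_m = \Psi^X_{\psi(n,m)}$, so $\psi$ $\gamma$-represents the application and $\gamma$ is effectively pca-valued.

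The only place where any care is needed is the last step: turning the informal ``enumerate $\Psi_n(A)$ and $\Psi_m(A)$ in parallel and check $D_u\subseteq\Psi_m(A)$'' procedure into a single enumeration operator with an index computable from $n$ and $m$. This is routine bookkeeping of the same kind used to establish transitivity of $\le_e$, so I do not expect a genuine obstacle; it is merely the one spot in the argument that must be spelled out precisely.
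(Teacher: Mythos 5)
Your proof is correct and takes essentially the same route as the paper, which simply observes that application can be simulated on indices by a computable function via the $S$-$m$-$n$ theorem for enumeration operators; you have merely spelled out the parallel-enumeration construction and added the (routine) verification that $\gamma$ is surjective onto $\G^X$ via the identification of $\G^X$ with the sets $e$-reducible to $X$.
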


\begin{proof}
  It suffices to show that application can be simulated on codes using a computable function,
  which is the case by the $S$-$m$-$n$ theorem.
\end{proof}

The relation $\mathbin{\nsim_\gamma}$ is not generally $X$-c.e.: even for the case $X = \emptyset$, given
codes of two c.e.\ sets we cannot tell whether they code the same set.
However, fixing a set $A$ and an $n \not\in A$, we can enumerate the codes of all sets $B$ such that $n \in B$.
As in the case of the van Oosten model in Theorem~\ref{thm:B_embed_turing},
the key is to find an $(a_\bot, a_\top)$-decider where $a_\bot$ and $a_\top$
are related in a suitable manner.
For our purposes it suffices that $a_\top \not\subseteq a_\bot$, hence our interest is
in those embeddings that do not reverse the inclusion order.

\begin{definition}
  An embedding $f : \G^X \to \G^Y$ is \emph{order-reversing} if for all $A, B \in \G^X$,
  whenever $A \subseteq B$, $f(B) \subseteq f(A)$.
\end{definition}

\begin{theorem}\label{thm:GXGY}
  \label{thm:GX_to_GY}
  Let $X, Y \subseteq \omega$.  There exists a non-order-reversing embedding $f
  : \G^X \to \G^Y$ if and only if $X \le_e Y$.
\end{theorem}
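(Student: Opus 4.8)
\medskip

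The plan is to prove both directions separately, with the backward direction being essentially a soft argument and the forward direction requiring the decider machinery developed above, combined with the observation that a non-order-reversing embedding lets us locate a witnessing point.

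First I would handle the ``if'' direction. Suppose $X \le_e Y$. Then $\G^X \subseteq \G^Y$, since $\G^X$ is the least class containing $X$ and the c.e.\ sets closed under application, and $X \in \G^Y$ because $X \le_e Y$ means $X = A \cdot Y$ for some c.e.\ set $A$. The inclusion map $\G^X \hookrightarrow \G^Y$ is then an embedding (application is defined identically on both), and it is clearly not order-reversing: if $A \subseteq B$ then $A \subseteq B$ still holds in $\G^Y$, so we do not have $f(B) \subseteq f(A)$ in general (indeed we would need $A = B$ for that). Strictly, I should exhibit one pair $A \subsetneq B$ in $\G^X$ witnessing non-order-reversal; taking $A = \emptyset$ and any nonempty $B \in \E \subseteq \G^X$ suffices.

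For the ``only if'' direction, suppose $f : \G^X \to \G^Y$ is a non-order-reversing embedding. I want to conclude $X \le_e Y$. The natural target is to show that the set $X \oplus \cmp X$ is $e$-reducible to $Y$, from which $X \le_e Y$ follows; but actually since we only need $X \le_e Y$, it may be cleaner to build a decider directly for $X$. Using Theorem~\ref{thm:G_decider} with the $Y\!$-effectively pca-valued numbering $\gamma(n) = \Psi^Y_n$ (which exists by the lemma just above) and with $B_\bot \subseteq B_\top$ in $\G^X$ chosen so that $f(B_\bot)$ and $f(B_\top)$ are not equal: the key point of Lemma~\ref{lem:monotonic_not_chain} plus non-order-reversal is that we can find such $B_\bot \subsetneq B_\top$ with $f(B_\top) \not\subseteq f(B_\bot)$. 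Indeed, take any $B_\bot \subsetneq B_\top$ in $\E$; since $f$ is not order-reversing there is \emph{some} comparable pair it does not reverse, and by an argument analogous to Lemma~\ref{lem:monotonic_not_chain} (playing with $\mathrm{p}$, $\mathrm{p_1}$, $\mathrm{p_2}$) one upgrades this to a usable pair — I expect one takes the incomparable-range elements from Lemma~\ref{lem:monotonic_not_chain} and combines them via pairing with a fixed nonempty c.e.\ set so that the images straddle some point $j$ with $j \in f(B_\top) \setminus f(B_\bot)$. Then Theorem~\ref{thm:G_decider} gives a $Y\!$-computable $(f(B_\bot), f(B_\top))$-decider $d$ for $X$ (note $X \le_e X$ so the hypothesis $Z \le_e X$ is met with $Z = X$). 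Now pick $j \in f(B_\top) \setminus f(B_\bot)$; to enumerate $X$, for each $n$ run $\Psi^Y_{d(n)}$ — which by construction equals $f(B_\bot)$ if $n \notin X$ and $f(B_\top)$ if $n \in X$ — and wait to see $j$ appear. If $j$ appears then $n \in X$. This is an enumeration of $X$ using $Y$, i.e.\ $X \le_e Y$ (using that $X \le_e Y$ is equivalent to $X$ being $Y$-c.e.\ in the enumeration-operator sense).

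The main obstacle I anticipate is the middle step: converting the bare hypothesis ``$f$ is non-order-reversing'' into a concrete pair $B_\bot \subsetneq B_\top \in \G^X$ whose images are ordered the ``right'' way (i.e.\ $f(B_\top) \not\subseteq f(B_\bot)$, so that a witnessing point $j \in f(B_\top) \setminus f(B_\bot)$ exists and the one-sided enumeration above actually works). Lemma~\ref{lem:monotonic_not_chain} produces incomparable elements of $\ran(f)$ but says nothing about a chain; the non-order-reversing hypothesis gives a single pair $A \subseteq B$ with $f(B) \not\subseteq f(A)$, but we also need $A \ne B$ genuinely — which is automatic since $f$ injective forces $f(A) \ne f(B)$. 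So in fact any witnessing pair from the definition of non-order-reversing directly supplies $B_\bot := A$, $B_\top := B$ with $f(B_\top) \not\subseteq f(B_\bot)$, and the hypothesis $B_\bot \subseteq B_\top$ required by Theorem~\ref{thm:G_decider} is met. The remaining care is just checking that $d$ being $Y\!$-computable and $\Psi^Y_{d(n)}$ being literally one of two fixed sets makes the ``wait to see $j$'' procedure a legitimate enumeration reduction — which is routine.
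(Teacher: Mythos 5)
Your overall strategy is the same as the paper's: the inclusion handles the ``if'' direction, and for ``only if'' you take a pair $B_\bot\subseteq B_\top$ witnessing that $f$ is not order-reversing (so $f(B_\top)\not\subseteq f(B_\bot)$, and automatically $B_\bot\subsetneq B_\top$), feed it to Theorem~\ref{thm:G_decider} with $Z=X$, pick $j\in f(B_\top)-f(B_\bot)$, and enumerate $n$ into $X$ when $j$ appears in $\Psi^Y_{d(n)}$. The detour through Lemma~\ref{lem:monotonic_not_chain} and pairing combinators in your middle paragraph is unnecessary, as you yourself conclude at the end: the definitional witness of non-order-reversal is already the pair you need.

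There is one point you dismiss as ``routine'' that is in fact the place where the argument can fail as written. You take $d$ to be $Y\!$-computable, i.e.\ computed with $Y$ as a \emph{Turing} oracle. An enumeration reduction $X\le_e Y$ may only use positive information about $Y$; with a merely $Y\!$-computable $d$, the set $\{n \mid j\in\Psi^Y_{d(n)}\}$ is only c.e.\ in $Y$, which gives $X\le_e Y\oplus\cmp Y$, strictly weaker than $X\le_e Y$ in general. The fix is to observe that the numbering $\gamma(n)=\Psi^Y_n$ is \emph{effectively} pca-valued (application on codes is simulated by a plain computable function, by the $S$-$m$-$n$ theorem, with no oracle), so the decider $d$ produced by Theorem~\ref{thm:G_decider} is computable outright; then $n\mapsto$ ``put $n$ in whenever $\langle j,u\rangle\in W_{d(n)}$ for some $D_u\subseteq Y$'' is a genuine enumeration operator and $X\le_e Y$ follows. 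The paper makes exactly this remark at the end of its proof; without it, your last step does not establish the stated conclusion.
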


\begin{proof}
  One direction is trivial, since $e$-reducibility is transitive, and hence if
  $A \le_e X$ and $X \le_e Y$ then also $A \le_e Y$.
  It follows that the inclusion is an order-preserving embedding.

  For the other direction, suppose that $f : \G^X \to \G^Y$ is a non-order-reversing embedding
  and fix sets $B_\bot \subsetneq B_\top \in \G^X$ such that $f(B_\top) \not\subseteq f(B_\bot)$.
  By Theorem~\ref{thm:G_decider} there is a computable $(f(B_\bot), f(B_\top))$-decider for $X$.
  Fix now an element $k \in f(B_\top) - f(B_\bot)$, which exists by our choice of $B_\bot$ and $B_\top$.
  In order to enumerate $X$, we enumerate $\Psi_{d(n)}^Y$ looking for $k$, and enumerate $n$ into $X$
  if we find it, which shows that $X \le_e Y$.
  Note that $d$ can be taken to be computable since the map $n \mapsto \Psi_n^Y$ is effectively pca-valued.
\end{proof}

We conjecture that Theorem~\ref{thm:GXGY} can be generalized to arbitrary 
embeddings $f : \G^X \to \G^Y$. 
For the record, we pose the following question:

\begin{question}
  Do there exist $X, Y \subseteq \omega$ such that $\G^X \hookrightarrow \G^Y$ and $X \not\le_e Y$?
\end{question}

It is tempting to try to resolve the question by showing that order-reversing 
embeddings do not exist, but this is unfortunately not the case, as we now show. 

\begin{theorem}
  There exists an order-reversing embedding $\E \to \G$.
\end{theorem}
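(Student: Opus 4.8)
The plan is to build an order-reversing embedding $f : \E \to \G$ by exploiting a fixed bijection between the c.e.\ sets and $\omega$, and encoding each c.e.\ set by its \emph{complement} in the output. Since $\E$ is countable, fix a numbering $\nu : \omega \to \E$ (say $\nu(n) = W_n$), and for each c.e.\ set $A$ let $n_A$ be the least index with $W_{n_A} = A$. The idea is to make $f(A)$ record, in a graph-model-friendly way, information that is monotone \emph{decreasing} in $A$: the natural candidate is to have $f(A)$ ``contain $\la a, u\ra$'' precisely when this is consistent with applicative behaviour on complements. Concretely, I would first reduce to understanding application: in $\G$, $X \cdot Y = \{n \mid \exists u.\, \la n, u\ra \in X \wedge D_u \subseteq Y\}$, and the key observation is that $D_u \subseteq Y$ is a \emph{positive} condition on $Y$, so if we want $f$ to turn the subset order around, the elements we feed into the second coordinate should be drawn from a co-c.e.-flavoured source. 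The cleanest route is to imitate the construction in Theorem~\ref{thm:count_to_G} (or Theorem~\ref{thm:B_to_G}), but with the base layer $Y_{A,0}$ replaced by something that shrinks as $A$ grows.

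The concrete construction I would attempt: for $A \in \E$, let $f(A) = \bigcup_{i} Y_{A,i}$ where $Y_{A,0} = \{\, n^* \mid n \notin A \,\}$ with $n^* = \la n, 1\ra$ (so that already $A \subseteq B \Rightarrow Y_{B,0} \subseteq Y_{A,0}$), and $Y_{A,i+1} = \{\, \la y, 2^{n^*}\ra \mid y \in Y_{A \cdot W_n,\, i} \,\}$ — \emph{except} that this inductive clause must itself be arranged so monotonicity in the subscript reverses. Because the application $A \cdot W_n$ is monotone in $A$, and $Y_{-,i}$ is (by induction) order-reversing in its first argument, the composite $A \mapsto Y_{A \cdot W_n, i}$ is again order-reversing, so the recursion is self-consistent. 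Injectivity comes for free: from $f(A)$ one reads off $Y_{A,0}$ as the set of elements of the form $n^*$, hence recovers $\cmp A$, hence $A$. The embedding property $f(A \cdot B) = f(A) \cdot f(B)$ is then verified exactly as in Theorem~\ref{thm:count_to_G}: one checks $f(A)\cdot f(B) = f(A) \cdot \{n^* \mid n \notin B\}$ using that no element of $f(A)$ outside $Y_{A,0}$ has the form $n^*$ (so $0 \notin f(A)$ and the pairing is non-degenerate), and then unwinds the definition of $Y_{A,i+1}$ to see that $y \in f(A) \cdot \{n^* \mid n \notin B\}$ iff $\exists i, n.\ n \notin B \wedge y \in Y_{A \cdot W_n, i} = Y_{A \cdot B, i}$ iff $y \in f(A \cdot B)$. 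The one subtle point is that we must have $D_{2^{n^*}} = \{n^*\} \subseteq f(B)$ hold \emph{iff} $n^* \in f(B)$ iff $n \notin B$ — this is where the complement-encoding does the work, turning ``$n$ enters $B$'' (a positive event) into ``$n^*$ leaves $f(B)$'' (a negative one), which is exactly order-reversal.

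I expect the main obstacle to be checking that the result actually \emph{lands in} $\G$, i.e.\ that $f(A) \in \mathcal P(\omega)$ is admissible — which is automatic since $\G$ is all of $\mathcal P(\omega)$ — and more seriously, verifying that the recursion defining $Y_{A,i}$ is well-founded and that the order-reversal is preserved through the application step $A \cdot W_n$, since $\cdot$ in $\G$ is monotone (not antitone) and we are composing an antitone map with a monotone one. One must track carefully that $Y_{A,i}$ is antitone in $A$ \emph{for every fixed $i$}, prove this by induction on $i$, and confirm that the union over $i$ remains antitone. A secondary check is that $f$ is well-defined independently of the choice of index $n$ with $W_n = A \cdot W_m$ appearing in the recursion — but since $Y_{C,i}$ depends only on the \emph{set} $C$ and not on a chosen index for it, this is not an issue. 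The verification that $f$ is order-reversing ($A \subseteq B \Rightarrow f(B) \subseteq f(A)$) then follows immediately from the layerwise antitonicity, completing the proof.
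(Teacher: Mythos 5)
Your overall strategy --- a layered construction as in Theorem~\ref{thm:count_to_G} with an antitone base layer --- has the right shape, and it is structurally what the paper does. But your specific choice of base layer, $Y_{A,0} = \compr{n^*}{n \notin A}$, breaks the verification that $f(A)\cdot f(B) = f(A\cdot B)$, and the step where you write ``$\exists i,n.\ n\notin B \wedge y \in Y_{A\cdot W_n,\, i} = Y_{A\cdot B,\, i}$'' is exactly where it fails: the condition $n \notin B$ tells you nothing about the relationship between $W_n$ and $B$, so there is no reason for $Y_{A\cdot W_n,\, i}$ to equal, or even be contained in, $Y_{A\cdot B,\, i}$. In Theorem~\ref{thm:count_to_G} the analogous identity holds because the witness $n$ in the second coordinate satisfies $\gamma(n)=b$; with the complement encoding the witness merely satisfies $n\notin B$, which severs that link. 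The converse inclusion fails too: to get $f(A\cdot B)\subseteq f(A)\cdot f(B)$ you would want to pick $n$ with $W_n = B$, but you additionally need $n\notin B$, which need not hold. A concrete breakdown: for $B=\omega$ no element of $f(\omega)$ has the form $n^*$ (the base layer is empty, and the higher layers consist of pairs $\tuple{y, 2^{n^*}}$, which are never of the form $m^*$), so $f(A)\cdot f(\omega)=\emptyset$, while $f(A\cdot\omega)$ is nonempty whenever $A\cdot\omega\neq\omega$.

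The paper's fix is to take as base layer not the complement but the (starred) indices of c.e.\ \emph{supersets}: $f_0(A) = \compr{e^*}{A\subseteq W_e}$. This is still antitone in $A$, so the layerwise order-reversal goes through exactly as you describe, but now ``$e^*\in f(B)$'' means $B\subseteq W_e$, which is precisely the hypothesis needed to run the monotonicity argument: $B\subseteq W_e$ gives $A\cdot B\subseteq A\cdot W_e$, hence $f(A\cdot W_e)\subseteq f(A\cdot B)$ by antitonicity, closing the forward inclusion; for the backward inclusion one chooses $e$ with $W_e=B$ exactly, which is always possible since $B$ is c.e. Note that the antitonicity of $f$ is not only the property to be proved --- it is used inside the proof that $f$ respects application, and your base layer does not supply the inclusion $B\subseteq W_n$ that this use requires.
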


\begin{proof}
  We define $f$ inductively as follows:
  \begin{align*}
    f_0(A) &= \compr{e^*}{A \subseteq W_e}\\
    f_{n+1}(A) &= \compr{\tuple{x, 2^{e^*}}}{ e \in \omega \wedge x \in f_n(AW_e)}\\
    f(A) &= \bigcup_{n \in \omega} f_n(A).
  \end{align*}

  By induction on $n$, we can show that every $f_n$ is order-reversing.
  The base case is apparent.
  For the inductive case, note that if $A \subseteq A'$ then $AW_e \subseteq A'W_e$ hence
  by induction, $f_n(A'W_e) \subseteq f_n(AW_e)$.

  It remains to show that $f$ is an embedding.
  Injectivity follows from the definition of $f_0(A)$: we can recover $A$ by
  \[
    A = \bigcap_{e^* \in f_0(A)} W_e
  \]
  and no $f_{n+1}(A)$ contains elements of the form $e^*$.

  To see $f(AB) = f(A)f(B)$, let $x \in f(AB)$; in particular, $x \in f_n(AB)$ for some $n \in \omega$.
  Take $e$ such that $W_e = B$, then $\tuple{x, 2^{e^*}} \in f_{n+1}(A) \subset f(A)$.
  By our choice of $e$, $e^* \in f_0(B) \subset f(B)$.
  Hence $x \in f(A)f(B)$, as desired.

  For the other direction, let $x \in f(A)f(B)$.
  Then there is some $e$ such that $\tuple{x, 2^{e^*}} \in f(A)$ and $e^* \in f(B)$.
  Thus $x \in f(AW_e)$, and since $e^* \in f(B)$, $B \subseteq W_e$.
  Since $f$ is order-reversing, $f(AW_e) \subseteq f(AB)$, as desired.
\end{proof}

We can use Theorem~\ref{thm:G_decider} to clarify the relation between 
$\G^X$ and $\K_2^Y$ as follows. 
The reader may compare this to Corollary~\ref{cor:BX_to_K2Y}.

\begin{theorem}
  \label{thm:GX_to_K2Y}
  Let $X, Y \subseteq \omega$.  If there exists an embedding $f : \G^X \hookrightarrow \K_2^Y$
  then every $Z \le_e X$ is $Y\!$-computable.
\end{theorem}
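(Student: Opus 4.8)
The plan is to apply Theorem~\ref{thm:G_decider} with $\A = \K_2^Y$, together with a suitable effectively pca-valued partial numbering of $\K_2^Y$, and then convert the resulting decider into a Turing reduction. First I would fix an embedding $f : \G^X \hookrightarrow \K_2^Y$ and let $Z \le_e X$ be arbitrary; recall that then $Z \in \G^X$. Choosing the two sets $B_\bot \subsetneq B_\top$ in $\G^X$ (for instance $B_\bot = \emptyset$ and $B_\top = \omega$, or $B_\bot = \emptyset$ and $B_\top$ any nonempty set), Theorem~\ref{thm:G_decider} hands us a $Y\!$-computable $(f(B_\bot), f(B_\top))$-decider $d$ for $Z$, provided $\K_2^Y$ carries a $Y\!$-effectively pca-valued partial numbering, which it does by Lemma~\ref{lem:K2_numbering} (the numbering $\gamma_t(n) = \Phi^Y_n$ restricted to total $n$).

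Next I would turn the decider into a Turing reduction $Z \le_T Y$. For this I want $f(B_\bot) \neq f(B_\top)$, which holds since $f$ is injective and $B_\bot \neq B_\top$. Then I would like to invoke Theorem~\ref{thm:decider_with_ce}: it suffices that $\gamma_t$ has $Y\!$-c.e.\ inequivalence, and this is exactly the content of the second half of Lemma~\ref{lem:K2_numbering}. So from the $Y\!$-computable $(f(B_\bot), f(B_\top))$-decider $d$ for $Z$ and the $Y\!$-c.e.\ inequivalence of $\gamma_t$, Theorem~\ref{thm:decider_with_ce} yields $Z \le_T Y$, i.e.\ $Z$ is $Y\!$-computable. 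Since $Z \le_e X$ was arbitrary, this is the desired conclusion.

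The one point that needs care is that the functions $f(B_\bot)$ and $f(B_\top)$ are genuine elements of $\K_2^Y$, so that they have $\gamma_t$-codes and the decider's outputs land in $\dom(\gamma_t)$; this is automatic since $f$ maps into $\K_2^Y$ and $\gamma_t$ is surjective onto (the total part of) $\K_2^Y$, and the $(f(B_\bot), f(B_\top))$-decider by definition produces $\gamma_t$-codes of $f(B_\bot)$ or $f(B_\top)$. I expect the main (very mild) obstacle to be purely bookkeeping: confirming that $f(B_\bot)$ and $f(B_\top)$ being distinct is all that Theorem~\ref{thm:decider_with_ce} requires — one does not need them incomparable here, in contrast to the $\B^Y$ case — so the proof is genuinely a two-line composition of Theorems~\ref{thm:G_decider} and~\ref{thm:decider_with_ce} via Lemma~\ref{lem:K2_numbering}, and the write-up should simply record that chain.
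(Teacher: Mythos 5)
Your proposal is correct and follows essentially the same route as the paper: fix $B_\bot \subsetneq B_\top$ in $\G^X$, apply Theorem~\ref{thm:G_decider} with the numbering $\gamma_t$ of Lemma~\ref{lem:K2_numbering} to get a $Y\!$-computable $(f(B_\bot), f(B_\top))$-decider for $Z$, and conclude $Z \le_T Y$ from Theorem~\ref{thm:decider_with_ce} via the $Y\!$-c.e.\ inequivalence of $\gamma_t$. Your observation that injectivity of $f$ gives $f(B_\bot) \neq f(B_\top)$ and that incomparability is not needed here (unlike in the $\B^Y$ case) is exactly the right bookkeeping.
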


\begin{proof}
  Let $\gamma_t : \omega \to \K_2^Y$ be the effectively pca-valued partial numbering from
  Lemma~\ref{lem:K2_numbering} and let $B_\bot \subsetneq B_\top \in \G^X$.
  By Theorem~\ref{thm:G_decider} there is a computable $(f(B_\bot), f(B_\top))$-decider for $Z$.
  Since $\gamma_t$ has $Y\!$-c.e.\ inequivalence, by Theorem~\ref{thm:decider_with_ce} we have $Z \le_T Y$.
\end{proof}

Theorem~\ref{thm:GX_to_K2Y} has the following two results as corollaries, which show that
Corollaries~\ref{cor:K2eff_to_G} and~\ref{cor:K1_to_E} cannot be reversed.

\begin{corollary} \label{cor:EnotintoK2eff}
  $\E$ does not embed in $\K_2^\eff$.
\end{corollary}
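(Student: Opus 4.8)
The plan is to apply Theorem~\ref{thm:GX_to_K2Y} directly with a suitable choice of oracle~$X$. The key observation is that $\E = \G^{\emptyset}$, so a hypothetical embedding $\E \hookrightarrow \K_2^{\eff} = \K_2^{\emptyset}$ would be an embedding $\G^{\emptyset} \hookrightarrow \K_2^{\emptyset}$, and Theorem~\ref{thm:GX_to_K2Y} (taking $X = Y = \emptyset$) would then force every $Z \le_e \emptyset$ to be $\emptyset$-computable, i.e.\ computable.

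The contradiction then comes from the well-known fact that $e$-reducibility to the empty set coincides with being c.e.: $Z \le_e \emptyset$ if and only if $Z$ is c.e. Indeed, unravelling the definition of $\le_e$, $x \in Z \Leftrightarrow \exists u(R(x,u) \wedge D_u \subseteq \emptyset)$ for some c.e.\ relation $R$, and $D_u \subseteq \emptyset$ forces $u = 0$ (the code of the empty set), so $Z = \set{x \mid R(x,0)}$ is c.e.; conversely every c.e.\ set arises this way. Hence if every $Z \le_e \emptyset$ were computable, every c.e.\ set would be computable, which is false — for instance the halting set $K$ is c.e.\ but not computable.

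So the proof is short: assume for contradiction that there is an embedding $f : \E \hookrightarrow \K_2^{\eff}$; since $\E = \G^{\emptyset}$ and $\K_2^{\eff} = \K_2^{\emptyset}$, Theorem~\ref{thm:GX_to_K2Y} with $X = Y = \emptyset$ yields that every $Z \le_e \emptyset$ is computable; but $K \le_e \emptyset$ (as $K$ is c.e.) and $K$ is not computable, a contradiction. There is essentially no obstacle here — the work has all been done in Theorem~\ref{thm:GX_to_K2Y}; the only point requiring care is noting that $\G^{\emptyset}$ really is $\E$ (which was recorded when $\G^X$ was defined) and that the $e$-degree of $\emptyset$ is exactly the class of c.e.\ sets.

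\begin{proof}
  Recall that $\E = \G^{\emptyset}$ and $\K_2^{\eff} = \K_2^{\emptyset}$.
  Suppose towards a contradiction that there is an embedding $\E \hookrightarrow \K_2^{\eff}$.
  Applying Theorem~\ref{thm:GX_to_K2Y} with $X = Y = \emptyset$, we conclude that every $Z \le_e \emptyset$ is computable.
  But for any set $Z$ we have $Z \le_e \emptyset$ if and only if $Z$ is c.e., since $D_u \subseteq \emptyset$ forces $u = 0$.
  In particular the halting set $K$ satisfies $K \le_e \emptyset$, so $K$ would be computable, a contradiction.
\end{proof}
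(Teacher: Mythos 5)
Your proof is correct and follows the paper's own argument: the paper likewise derives the corollary by applying Theorem~\ref{thm:GX_to_K2Y} (with $X=Y=\emptyset$) to conclude that the halting set $K$ would be decidable, a contradiction. Your explicit verification that $Z \le_e \emptyset$ iff $Z$ is c.e.\ is a welcome bit of added detail but does not change the route; the paper also records a second derivation via Corollaries~\ref{cor:BX_to_K2X} and~\ref{cor:K2eff_to_G}, which you did not need.
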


\begin{proof}
  Suppose an embedding $\E \hookrightarrow \K_2^\eff$ existed. Then by 
  Theorem~\ref{thm:GX_to_K2Y} the halting set $K$ would be decidable, 
  a contradiction.
  
  Note that we can also derive the corollary from Corollaries~\ref{cor:BX_to_K2X}
  and \ref{cor:K2eff_to_G}:
  By Corollary~\ref{cor:BX_to_K2X} we have that 
  $\B^\eff \not\hookrightarrow \K_2^\eff$, so if $\E \hookrightarrow \K_2^\eff$
  then by Corollary~\ref{cor:K2eff_to_G} we obtain 
  $\B^\eff \hookrightarrow \E \hookrightarrow \K_2^\eff$, 
  contradicting Corollary~\ref{cor:BX_to_K2X}.
\end{proof}

\begin{corollary}
  $\E$ does not embed in $\K_1$.
\end{corollary}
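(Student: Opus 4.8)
The plan is to derive this immediately from the chain of results already established, without any new construction. We want to show $\E$ does not embed in $\K_1$.

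First I would recall that $\K_1 = \K_1^\emptyset$, so any embedding $\E \hookrightarrow \K_1$ would in particular be an embedding $\E \hookrightarrow \K_1^\emptyset$. Next I would observe that $\E = \G^\emptyset$ by the definitions in Section~\ref{sec:G}, so such an embedding is an embedding $\G^\emptyset \hookrightarrow \K_1^\emptyset$. The key step is then to invoke Theorem~\ref{thm:GX_to_K2Y}: although that theorem is stated for $\K_2^Y$, we have that $\K_1^Y$ embeds into $\K_2^Y$ (by Theorem~\ref{thm_ST:2}, or one may simply note that $\K_1 \hookrightarrow \K_2^\eff$ by Corollary~\ref{cor:K2eff_to_G}), so composing an embedding $\E \hookrightarrow \K_1$ with the embedding $\K_1 \hookrightarrow \K_2^\eff$ yields an embedding $\E \hookrightarrow \K_2^\eff$. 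This is precisely what Corollary~\ref{cor:EnotintoK2eff} forbids, giving a contradiction.

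Alternatively, and perhaps more cleanly, I would apply Theorem~\ref{thm:GX_to_K2Y} directly once we have reduced to the $\K_2$ case: with $X = \emptyset$ and $Y = \emptyset$, an embedding $\G^\emptyset \hookrightarrow \K_2^\emptyset = \K_2^\eff$ would force every $Z \le_e \emptyset$ to be computable; taking $Z = K$ (the halting set, which is c.e.\ and hence $e$-reducible to $\emptyset$) yields that $K$ is decidable, a contradiction. So the only real content is the bookkeeping: $\E = \G^\emptyset$, $\K_1 = \K_1^\emptyset$, and the passage from $\K_1$ to $\K_2^\eff$ via an already-established embedding.

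I do not expect any genuine obstacle here — this corollary is a two-line consequence of Corollary~\ref{cor:EnotintoK2eff} together with the embedding $\K_1 \hookrightarrow \K_2^\eff$ from Corollary~\ref{cor:K2eff_to_G}. The only point requiring a moment's care is making sure the composition goes the right way: we need $\K_1 \hookrightarrow \K_2^\eff$ (which we have) and not the reverse, so that an embedding $\E \hookrightarrow \K_1$ composes to an embedding $\E \hookrightarrow \K_2^\eff$.

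\begin{proof}
  Suppose $\E$ embedded in $\K_1$. By Corollary~\ref{cor:K2eff_to_G} there is an embedding $\K_1 \hookrightarrow \K_2^\eff$, so composing we would obtain an embedding $\E \hookrightarrow \K_2^\eff$, contradicting Corollary~\ref{cor:EnotintoK2eff}.
\end{proof}
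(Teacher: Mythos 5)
Your proof is correct and is essentially the paper's own argument: assume an embedding $\E \hookrightarrow \K_1$, compose with the known embedding $\K_1 \hookrightarrow \K_2^\eff$, and contradict Corollary~\ref{cor:EnotintoK2eff}. The only cosmetic difference is which earlier result you cite for $\K_1 \hookrightarrow \K_2^\eff$, and your citation is perfectly adequate.
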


\begin{proof}
  Suppose an embedding $\E \hookrightarrow \K_1$ existed.
  Since $\K_1$ embeds in $\K_2^\eff$ (Theorem~\ref{thm_ST:1}),
  there would be an embedding $\E \hookrightarrow \K_2^\eff$, 
  contradicting Corollary~\ref{cor:EnotintoK2eff}.
\end{proof}

\appendix

\section{Equivalence of codings}
\label{sec:coding}

In section~\ref{sec:K2}, we introduced alternative codings for
Kleene's second model $\K_2$ and for van Oosten's sequential computation model $\B$.
Let us now show that these can indeed be considered to be merely different codings of
the same models, and that our results about embeddings extend to the usual codings.
Note that since our results are often about relativizations of $\K_2$ and $\B$,
it is essential that these embeddings also restrict to embeddings between the relativized
models as well.

We will use $\K_2$ and $\B$ to refer to the usual codings, as they are presented
in~\cite{vanOosten}, and $\PP$ and $\PP_\bot$ respectively to refer to our codings.
Our encodings use the same carrier sets, the set $\omega^\omega$ of total functions
$\omega \to \omega$ and the set $\omega^\omega_\bot$ of partial functions
$\omega \rightharpoonup \omega$, respectively.
For completeness, and to introduce a number of useful shorthands, we will now restate
the definitions.

Recall that we have a bijective coding scheme $\tuple{\ldots}$ on the naturals.
We will use $\vec{\alpha}$ to refer to a finite sequence of naturals, and
$\tuple{\vec{\alpha}}$ or just $\alpha$ to refer to its code.
We use $|\alpha|$ to mean the number of elements of $\alpha$,
and $\alpha_i$ to retrieve the $i$-th element of $\alpha$ (starting from 0),
seen as the code of a sequence.
Given $n, \vec{\alpha} \in \omega$, let
$n \cons \tuple{\vec{\alpha}} = \tuple{n, \vec{\alpha}}$,
$\tuple{\vec{\alpha}} \snoc n = \tuple{\vec{\alpha}, n}$,
and let $\ask(n) = 2n$ and $\say(n) = 2n+1$.
Given $x, \xi \in \omega$ we will use
$x \cons^* \xi$ to denote $((x \cons \xi_0) \cons \ldots) \cons \xi_{|\xi|-1}$; note that
here each $\xi_i$ is itself interpreted as a sequence.\footnote{This operation may resemble
concatenation but is in fact \emph{not} that: the result has length $|\xi_{|\xi|-1}| + 1$,
and for all $x, \xi, \alpha \in \omega$, $(x \cons^* \xi) \cons \alpha = x \cons^* (\xi \snoc \alpha)$.}
Given a function $f : \omega \to \omega$ and $n \in \omega$,
let $f\upharpoonright n$ be $\tuple{f(0), \ldots, f(n-1)}$.
Given $d, e \in \omega$, define $d \ast e$ as the following oracle computation for an
arbitrary oracle $\psi$:
\begin{equation}
  \label{eqn:ast}
  \Phi^\psi_{d \ast e} = \Phi^{\Phi^\psi_e}_d.
\end{equation}
That is, to execute $d \ast e$, simulate $d$ and use $e$ to compute any oracle queries that $d$ performs,
using the oracle $\psi$ for any oracle queries of $e$.
Note that $\mathbin{\ast}$ is associative, and has an identity element $e_\ast$ satisfying $\Phi^\psi_{e_\ast} = \psi$.

For the usual coding of Kleene's second model $\K_2$, it is helpful to introduce the
following notion of a limit.
Given a sequence of naturals $a_n$ and $k \in \omega$, we say $a$ \emph{converges}
to $k$ if there exists an $n$ such that $a_n = k+1$ and for all $i < n$, $a_i = 0$.
In this case, we write $\lim_{n \to \infty} a_n = k$.
Note that $n$ here does not truly ``go to infinity'', since we only look up to the
first non-zero element; however, we trust this will not cause confusion.

We extend this notion pointwise to families of functions, where given a family $f_n : \omega \to \omega$
and a function $g : \omega \to \omega$, we say $\lim_{n \to \infty} f_n = g$ when
for every $x \in \omega$, $\lim_{n \to \infty} f_n(x) = g(x)$.

Given $f, g \in \K_2$, their composition in $\K_2$ exists and is given by
\[
  f \cdot g = \lim_{n \to \infty} \lambda x.\, f(x \cons (g \upharpoonright n))
\]
if the right hand side exists (in which case it is necessarily unique).
If the limit does not exist, then $f \cdot g$ is undefined.

The usual coding of van Oosten's sequential computations $\B$ is defined in terms of \emph{dialogues},
which we will define in terms of a limit-like construction similar to the one above.

Given partial functions $\vph, \psi : \omega \rightharpoonup \omega$ and $k \in \omega$,
we say that $\vph$ $\psi$-converges to $k$ if there exists an $\alpha$ such that
$\vph(\alpha) = \say(k)$ and for every proper initial segment $\beta$ of $\alpha$,
there exists some $l$ such that $\vph(\beta) = \ask(l)$ and $\alpha_{|\beta|} = \psi(l)$.
We write $\lim_{\alpha \to \psi} \vph(\alpha) = k$ when this is the case.

We again extend this notion pointwise to functions; given $\delta_\alpha$ a family of functions, we write
$\lim_{\alpha \to \psi} \delta_\alpha = \Delta$ for some $\Delta$ iff for every $x$,
$\lim_{\alpha \to \psi} \delta_\alpha(x) \simeq \Delta(x)$.

Given two partial functions $\vph, \psi \in \B$, their composition in $\B$ is defined by
\[
  (\vph \cdot \psi)(x) = \lim_{\alpha \to \psi} \vph(x \cons \alpha).
\]

The application $\vph \cdot \psi$ is thus always defined.

Note that if $\alpha$ is the sequence witnessing that the limit exists for $x$,
the sequence $x \cons \alpha$ is what van Oosten~\cite{vanOosten} calls
a \emph{dialogue} between $\vph$ and $\psi$.

Note that these definitions suffice to give us the relativized versions of $\K_2$ and $\B$
simply by restricting the carrier set.

We are now ready to show that our codings $\PP$ and $\PP_\bot$ are mutually embeddable with
$\K_2$ and $\B$ respectively.
Given $e \in \omega$ and $\vph$ a
partial function, define
\begin{align*}
  (S_e\vph)(0) &= e\\
  (S_e\vph)(n+1) &= \vph(n).
\end{align*}
We understand $(S_e\vph)(n+1)$ to be undefined whenever $\vph(n)$ is undefined.
The embeddings of $\K_2$ and $\B$ into our codings will be given by $S_e$ for particular
choices of $e$.
The embeddings in the other direction are rather more involved.

\begin{theorem}
  \label{thm:k2_embed_p}
  There exists an $e \in \omega$ such that $S_e$ is an embedding $\K_2 \to \PP$ and
  $S_e$ restricts to an embedding $\K_2^X \to \PP^X$ for every oracle $X$.
\end{theorem}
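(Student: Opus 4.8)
The plan is to pick $e$ by the recursion theorem so that, for \emph{every} oracle $\psi$, the functional $\Phi_e$ outputs $e$ on input $0$ and, on input $n+1$, performs exactly the limit search that computes $\K_2$-application pointwise. The key observation is that in the expression $S_e(f)\cdot S_e(g)$ (computed in $\PP$ via \eqref{alt}) the oracle handed to $\Phi_e$ is $S_e(f)\oplus S_e(g)$, and from this join one recovers $f$ and $g$ computably, since $f(k)$ and $g(k)$ are obtained by reading off the appropriate coordinates of $\psi$ and discarding the prepended first value. Thus on input $n+1$ the functional $\Phi_e^\psi$ reconstructs $f$ and $g$, searches for the least $m$ with $f\bigl(n\cons(g\restr m)\bigr)\neq 0$, outputs that value minus one if such an $m$ exists, and diverges otherwise; by the convergence convention this is precisely $(f\cdot g)(n)=\lim_m f(n\cons(g\restr m))$. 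This procedure is computable uniformly in $\psi$, hence has an index, and the recursion theorem lets us take that index to be the $e$ it prints on input $0$.

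With $e$ fixed, the verification has three steps. First, $S_e$ is injective because $(S_e\vph)(k+1)=\vph(k)$ recovers $\vph$. Second, $S_e$ preserves application: if $f\cdot g\terminates$ in $\K_2$, then by \eqref{alt} and the definition of $S_e$ we have $S_e(f)\cdot S_e(g)=\Phi^{S_e(f)\oplus S_e(g)}_{(S_e f)(0)}=\Phi^{S_e(f)\oplus S_e(g)}_e$, which is declared defined exactly when this functional is total; by construction it halts on input $0$ and halts on input $n+1$ exactly when $(f\cdot g)(n)\terminates$, and since $f\cdot g$ is defined this holds for all $n$, so $S_e(f)\cdot S_e(g)\terminates$ in $\PP$. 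Comparing values shows it equals $S_e(f\cdot g)$, since it returns $e=(S_e(f\cdot g))(0)$ on input $0$ and $(f\cdot g)(n)=(S_e(f\cdot g))(n+1)$ on input $n+1$. Third, the relativized claim is immediate: if $f$ is $X$-computable then so is $S_e f$, so $S_e$ maps $\K_2^X$ into $\PP^X$, and being an embedding of $\K_2$ it restricts to an embedding $\K_2^X\to\PP^X$.

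The step I expect to be the main obstacle is lining up the two partiality conventions. In $\K_2$ the product $f\cdot g$ is undefined exactly when some pointwise limit fails to exist, while in $\PP$ the product $g\cdot h$ is undefined exactly when $\Phi^{g\oplus h}_{g(0)}$ fails to be total; the construction of $\Phi_e$ must make these two failure conditions coincide. Concretely, $\Phi_e^\psi$ must genuinely diverge (rather than return a default value) when the search for a nonzero term never succeeds, and its behaviour on input $0$ must never be a source of non-totality. Once these points are handled, the remaining checks are routine.
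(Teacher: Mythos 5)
Your proposal is correct and follows essentially the same route as the paper: define $e$ so that $\Phi_e^{S_ef\oplus S_eg}$ returns $e$ on input $0$ and performs the pointwise limit search on input $n+1$, then check injectivity, preservation of application (with the two partiality conventions lining up), and relativization via the $f$-computability of $S_ef$. The only cosmetic difference is that the paper has $\Phi_e$ read the value $d=(S_df)(0)$ off the left oracle instead of hardcoding $e$ by the recursion theorem.
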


\begin{proof}
  We define $e$ as the least natural such that for all $f, g \in \omega^\omega$ and $x, d \in \omega$,
  \begin{align*}
    \Phi^{S_df \oplus S_dg}_e(0) &= d\\
    \Phi^{S_df \oplus S_dg}_e(x+1) &= \lim_{n \to \infty} f(x \cons (g \upharpoonright n)).
  \end{align*}

  It is then easy to verify that if $f \cdot g\terminates$ then
  \begin{align*}
    (S_ef \cdot S_eg)(0) &= e\\
    (S_ef \cdot S_eg)(x+1)
     &= \Phi^{S_ef \oplus S_eg}_e(x)\\
     &= \lim_{n \to \infty} f(x \cons (g \upharpoonright n))\\
     &= (f \cdot g)(x) = S_e(f \cdot g)(x+1),
  \end{align*}
  as desired.

  Since $S_ef$ is $f$-computable, the embedding restricts to an embedding for every relativization.
\end{proof}

\begin{theorem}
  There exists an $e \in \omega$ such that $S_e$ is an embedding $\B \to \PP_\bot$ and
  $S_e$ restricts to an embedding $\B^X \to \PP_\bot^X$ for every oracle $X$.
\end{theorem}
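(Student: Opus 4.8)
The plan is to imitate the proof of Theorem~\ref{thm:k2_embed_p}, with the dialogue-limit $\lim_{\alpha\to\psi}$ playing the role of the ordinary limit $\lim_{n\to\infty}$. First I would take $e$ to be the least natural number such that for all $\vph,\psi\in\omega^\omega_\bot$ and all $x,d\in\omega$,
\begin{align*}
  \Phi^{S_d\vph\oplus S_d\psi}_e(0) &= d,\\
  \Phi^{S_d\vph\oplus S_d\psi}_e(x+1) &\simeq \lim_{\alpha\to\psi}\vph(x\cons\alpha).
\end{align*}
Such an $e$ exists because the value on the right is computable relative to the oracle $S_d\vph\oplus S_d\psi$: from that oracle we read off $d=(S_d\vph)(0)$, and we evaluate $\vph(n)=(S_d\vph)(n+1)$ and $\psi(n)=(S_d\psi)(n+1)$. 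Concretely, on input $0$ the machine outputs $(S_d\vph)(0)$; on input $x+1$ it maintains a finite sequence $\beta$, initially empty, and repeatedly queries $\vph(x\cons\beta)$, halting with output $k$ as soon as this value has the form $\say(k)$, and replacing $\beta$ by $\beta\snoc\psi(l)$ when it has the form $\ask(l)$; it diverges if any queried value diverges or if this loop never halts.

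Next I would verify that $S_e$ is the required embedding. Since the application of $\B$ is total, $\vph\cdot\psi$ is defined for all $\vph,\psi\in\B$, and in $\PP_\bot$ we have $S_e\vph\cdot S_e\psi=\Phi^{S_e\vph\oplus S_e\psi}_{(S_e\vph)(0)}=\Phi^{S_e\vph\oplus S_e\psi}_e$. Evaluating at $0$ yields $e=(S_e(\vph\cdot\psi))(0)$, and evaluating at $x+1$ yields $\lim_{\alpha\to\psi}\vph(x\cons\alpha)\simeq(\vph\cdot\psi)(x)=(S_e(\vph\cdot\psi))(x+1)$; hence $S_e\vph\cdot S_e\psi=S_e(\vph\cdot\psi)$. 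Injectivity is clear, since $\vph$ is recovered from $S_e\vph$ via $\vph(n)=(S_e\vph)(n+1)$. For the relativization, note that $S_e\vph$ is $\vph$-computable, so if $\vph$ is partial $X$-computable then so is $S_e\vph$; thus $S_e$ maps $\B^X$ into $\PP_\bot^X$, and since $\B^X$ is closed under application we have $\vph\cdot\psi\in\B^X$, whence $S_e(\vph\cdot\psi)\in\PP_\bot^X$, so the displayed identity also witnesses an embedding $\B^X\to\PP_\bot^X$.

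The only real content is the claim, used when choosing $e$, that the oracle computation of $\Phi^{S_d\vph\oplus S_d\psi}_e(x+1)$ agrees as a partial function with $\lim_{\alpha\to\psi}\vph(x\cons\alpha)$. Here one uses that the loop is deterministic: once $\beta$ has been built, if $\vph(x\cons\beta)=\ask(l)$ then the next entry of $\beta$ is forced to be $\psi(l)$, so the successive $\beta$'s produced are precisely the proper initial segments of any witness $\alpha$ for the dialogue-limit. Consequently the computation halts exactly when such a witness exists, and with the same output, and it diverges (because $\vph$ or $\psi$ is undefined at a needed point, or because the chain never reaches a $\say$-value) exactly when the limit is undefined. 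This bookkeeping, together with the trivial observation that every natural number is uniquely either of the form $\ask(l)$ or of the form $\say(k)$, is the main---though entirely routine---obstacle.
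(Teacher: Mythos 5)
Your proposal is correct and takes essentially the same approach as the paper: the paper's own proof is a one-line remark that the argument is analogous to the $\K_2 \to \PP$ case, with $e$ now simulating the dialogue application of $\B$, and your explicit choice of $e$ together with the observation that the deterministic query loop computes $\lim_{\alpha \to \psi} \vph(x \cons \alpha)$ is precisely the intended instantiation of that analogy.
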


\begin{proof}
  The proof is analogous to the proof of Theorem~\ref{thm:k2_embed_p}, with the difference
  that $e$ simulates the application in $\PP_\bot$ instead of in $\PP$.
\end{proof}

The constructions in the other direction are considerably more involved,
as the embedding of a (partial) function $\psi$ needs to encode the behavior
of the code $\psi(0)$.
For convenience, we fix a code $r \in \omega$ such that for every oracle $\psi$,
\begin{equation}
  \label{eqn:def_r}
  \Phi^\psi_r(x) = \Phi^\psi_{\psi(0)}(x)
\end{equation}
which will simplify the following construction.
Note that for $f, g \in \PP$, we have that $f \cdot g = \Phi^{f \oplus g}_r$ iff $f \cdot g\terminates$,
and similarly for $\PP_\bot$ (where the termination condition holds trivially).

In order to simplify our arguments, let us also introduce some further terminology.
When talking about a (partial) function $\vph : \omega \rightharpoonup \omega$, we will
use \emph{places} to refer to elements of its domain.
We will say that a place $x$ is \emph{coding} for $\vph$ if it can play a role when $\vph$ is applied
to another element, and \emph{non-coding} if it cannot.
Concretely, for $f \in \K_2$ a place $x \cons \alpha$ is coding if for all proper initial segments $\beta$ of $\alpha$,
$f(x \cons \beta) = 0$, and it is non-coding otherwise.
For $\vph \in \B$, a place $x \cons \alpha$ is coding if for all proper initial segments $\beta$ of $\alpha$,
$\vph(x \cons \beta)$ is of the form $\ask(k)$ for some $k$.

\begin{theorem}
  \label{thm:p_embed_k2}
  There exists an embedding $E : \PP \to \K_2$, which restricts to an embedding $\PP^X \to \K_2^X$
  for every oracle $X$.
\end{theorem}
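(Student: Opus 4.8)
The plan is to realise $E(g)$, for $g\in\PP$, as a $\K_2$-associate of the partial continuous functional that $g$ names in $\PP$ — namely $h\mapsto\Phi^{g\oplus h}_{g(0)}$ — conjugated on the argument side by $E$ itself, so that $E(g)\cdot_{\K_2}E(h)\terminates=E(g\cdot_\PP h)$ whenever $g\cdot_\PP h\terminates$. Concretely, I would use the recursion theorem to produce one index $e$ and put $E(g)=\Phi^g_e$; this makes each $E(g)$ $g$-computable via a fixed index, so that $E$ restricts to a map $\PP^X\to\K_2^X$ for every $X$. On an input $\tuple{y,\rho}$ the index $e$ treats $\rho$ as an initial segment of some $E(h)$, reads off from $\rho$ (using the reserved places below) the finite part $h'$ of $h$ that $\rho$ reveals, and — within a step budget depending only on $|\rho|$ — runs the oracle computation $\Phi^{g\oplus h'}_{e\ast r}(y)$, where $\Phi_r$ from~\eqref{eqn:def_r} realises the $\PP$-application (so $\Phi^{g\oplus h}_r=g\cdot_\PP h$) and the recursive occurrence of $e$, threaded in by $\mathbin{\ast}$, carries out the remaining levels as a $\K_2$-associate of $g\cdot_\PP h'$. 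If this computation halts within the budget without querying $h$ beyond $h'$, then $E(g)$ outputs the result incremented by one (the $\K_2$-associate convention); otherwise it outputs $0$. Finally one fixes a flag $c_*$ with $\Phi^g_e(0)=c_*$ for all $g$ (so $E(h)(0)=c_*$ always), and on a sparse family of non-coding places — inputs whose second coordinate is a nonempty sequence not beginning with $c_*$ — one lets $E(g)$ simply record the corresponding value of $g$; this both makes $E$ injective and is the channel through which $E(g)$ reads its argument off a prefix of $E(h)$.

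The steps, in order, would be as follows. (1) Build $e$ from the $S$-$m$-$n$ theorem, $\mathbin{\ast}$, and the recursion theorem, and observe that $\Phi^g_e$ is total — immediate, since the step budget bounds each computation and hence also the depth of the self-reference — so $E(g)\in\K_2$, and in fact $E(g)\le_T g$ uniformly. (2) Injectivity: recover $g$ from its reserved places in $E(g)$. (3) The homomorphism property: assume $g\cdot_\PP h\terminates=c$, i.e.\ $\Phi^{g\oplus h}_{g(0)}$ is total; fix $y$ and study the sequence $n\mapsto E(g)\big(\tuple{y,E(h)\upharpoonright n}\big)$. One checks (a) \emph{no wrong value}: whenever this term is positive it equals $v+1$ with $v$ the outcome of the inner computation, and since that computation halted its use lies within the part of $h$ revealed by $E(h)\upharpoonright n$, which agrees there with $h$, so $v=\Phi^{g\oplus h}_{e\ast r}(y)=\Phi^c_e(y)=E(c)(y)$; with the monotonicity of the revealed prefixes it follows that once the sequence is nonzero it is constantly $E(c)(y)+1$. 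And (b) \emph{eventual success}: since $c$ is total, each of the finitely many oracle values of $c$ consulted by $\Phi^c_e(y)$ is computable from a finite initial segment of $h$, so for all large $n$ the revealed prefix of $E(h)$ determines them all and the budget $n$ is ample, so the sequence is eventually $E(c)(y)+1$. Combining, $(E(g)\cdot_{\K_2}E(h))(y)=E(c)(y)$ for every $y$, i.e.\ $E(g)\cdot_{\K_2}E(h)\terminates=E(c)$. (4) The restriction to $\PP^X\to\K_2^X$ is then immediate from (1).

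The main obstacle is the simultaneous bookkeeping around the step budget: it must be large enough that, once enough of $h$ is revealed, the underlying computation runs to completion (so the associate commits), yet small enough that the associate never commits before that computation has converged using only the revealed prefix — and this has to be compatible with the self-reference, since the same budget also bounds the recursion. A second delicate point is reconciling the injectivity mechanism with the application semantics: using the flag $c_*$ and the coding/non-coding-place distinction, one must verify that the reserved places recording $g$ are never the first nonzero place met in a limit $E(g)\cdot_{\K_2}E(h)$ — which is exactly where the flag is used, since prefixes of $E(h)$ begin with $c_*$ whereas those places do not — and that if the recursion visits such a place at some auxiliary functional-input, the value produced there is still the one demanded of $E(c)$, which holds because the recipe recording $g$ is applied uniformly at every level. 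I expect the convergence argument of (3) and this reconciliation to be where the real work lies. Note, finally, that only the positive implication $g\cdot_\PP h\terminates=c\Rightarrow E(g)\cdot_{\K_2}E(h)\terminates=E(c)$ is needed, so no divergence preservation has to be shown.
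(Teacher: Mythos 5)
Your construction is sound and, at the level of architecture, matches the paper's: both encode $g$ at reserved non-coding places of $E(g)$ (guarded so that no prefix of any $E(h)$ can land on them), both evaluate the coding places by a step-bounded simulation of the $\PP$-application run against the finite part of the argument read off the supplied prefix, both signal ``not yet'' by $0$ and success by the value plus one, and both use $\mathbin{\ast}$ together with a self-referential index to handle iterated applications. Where you genuinely diverge is in the treatment of the nesting, which is the technical heart of the proof. The paper indexes coding places as $\dot x\cons^*\xi$ and evaluates them by an explicit tower $L(\xi)$ of operators $r_{|\xi|,\alpha}$ in which the newly supplied argument enters at the \emph{innermost} level; a failure deep in the tower must then be threaded outward, which forces the multi-valued failure coding of $r_{n,\alpha}$ (values $0,\dots,n$ for the possible failure modes, $k+n+1$ for success) and an induction on the length of $\xi$. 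In your version the new argument is peeled off at the \emph{outermost} level of the single recursion-theorem fixed point $e$, and the decisive observation is one of monotonicity: every approximating oracle occurring in the tower ($g\cdot h'$, then $(g\cdot h')\cdot h''$, and so on) is a \emph{restriction} of the corresponding true oracle, so an inner level can only cause the outermost level to fail (reported as $0$) --- it can never commit to a wrong value. A single failure code therefore suffices, and the induction on depth collapses to the one-step agreement argument you give in (3a). This is a genuine simplification. Two points that you currently only gesture at should be made explicit: first, the monotonicity claim $\Phi^{g\oplus h'}_r\subseteq\Phi^{g\oplus h}_r$ and its propagation through $\mathbin{\ast}$, which is exactly what justifies ``no wrong value'' uniformly at all nesting depths (it also dissolves your worry about the budget being ``small enough not to commit prematurely'': committing requires halting on revealed data only, and such a halt always reproduces the true value, so there is no upper constraint on the budget at all); and second, that the reserved places of $E(h)$ reveal every value of $h$ in the limit, which is what your eventual-success argument (3b) needs.
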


\begin{proof}
  The proof proceeds in two steps: we first define $E : \PP \to \K_2$, and then show that it is in
  fact an embedding; that is, that for all $f, g \in \PP$, if $f \cdot g \terminates$ then
  $Ef \cdot Eg \terminates = E(f \cdot g)$.

  For the construction of $E$, we will separate $\omega$ into two sets, $X$ and $Y$.
  For every $f \in \PP$, we will use the places in $X$ to encode $f$ in $Ef$ and ensure that
  this encoding is preserved under application.
  The places in $Y$ will be given the same values in $Ef$ regardless of the choice of $f$.

  Let $\zeta$ be the all-zero sequence of length $\tuple{} + 1$.
  For $x \in \omega$ let $\dot x = \tuple{} \cons \zeta \snoc x$.
  We will ensure that for every $f \in \PP$, $\dot x$ is non-coding for $Ef$,
  so that we can set $(Ef)(\dot x) = f(x)$ and thereby encode $f$ in $Ef$.
  Given a sequence $\alpha$, let $\hat \alpha$ be the shortest sequence such that for all $k$
  with $\dot k < |\alpha|$, $\hat \alpha_k = \alpha_{\dot k}$.

  Let
  \begin{align*}
    X &= \compr{\dot{x} \cons^* \xi}{x, \xi \in \omega}\\
    Y &= \omega - X.
  \end{align*}

  For $n, \alpha \in \omega$, let $r_{n, \alpha}$ be a code for the following computation
  with an arbitrary total oracle $f$.
  On input $x$, simulate $\Phi^{f \oplus \hat{\alpha}}_r(x)$ for $|\alpha|$
  steps, where $r$ is the code defined in (\ref{eqn:def_r}).
  When $f$ is queried and returns $k$, if $k < n$ then terminate the whole computation and return $k$.
  If $k \geq n$, then use $k - n$ as the result of the oracle query.
  If $\hat{\alpha}$ is queried return the value at that position or, if it is out of bounds,
  terminate the whole computation and return $n$.
  Similarly, if the computation runs out of steps, return $n$.
  Finally, if the computation terminates with result $k$, return $k + n + 1$.

  The definition of $r_{n, \alpha}$ can be understood as follows: $r_{n, \alpha}$ is a computation
  with a left oracle that may fail in $n$ different ways (denoted by responses $0$ through $n-1$),
  and a right oracle that is partially known ($\hat\alpha$).
  Hence, $r_{n, \alpha}$ is a computation which may fail in $n+1$ different ways.

  Now define, using $\ast$ as defined in (\ref{eqn:ast}),
  \begin{align*}
    L(\tuple{}) &= e_\ast\\
    L(\alpha \cons \xi) &= r_{|\xi|, \alpha} \ast L(\xi).
  \end{align*}

  Given some possibly undefined expression $e$, let $\try(e)$ be $n+1$ if $e$ has
  value $n$, and $0$ otherwise.

  Using this, we can now define $Ef$:
  \begin{align*}
    (Ef)(\dot x \cons^* \xi) &= \Phi^f_{L(\xi)}(x) & \text{for $x, \xi \in \omega$}\\ 
    (Ef)(\tuple{}) &= 0 &\\
    (Ef)(y \cons \alpha) &= \try(\alpha_y) & \text{for $y \cons \alpha \in Y$.}
  \end{align*}

  The last line of this definition can be read as $(Ef)(y \cons \alpha) = \alpha_y + 1$
  when $y < |\alpha|$ and $(Ef)(y \cons \alpha) = 0$ otherwise.

  We immediately see that $Ef$ is in fact $f$-computable,
  from which it follows that if $E$ is an embedding $\PP \to \K_2$,
  then it restricts to an embedding $\PP^X \to \K_2^X$ for every oracle $X$.
  Moreover, for any $x \in \omega$,
  \[ (Ef)(\dot x) = \Phi^f_{L(\tuple{})}(x) = \Phi^f_{e_\ast}(x) = f(x), \]
  meaning that $E$ is injective.

  It thus remains to show that for all $f, g \in \PP$,
  if $f \cdot g \terminates$ then $Ef \cdot Eg\terminates = E(f \cdot g)$, which we will do pointwise.
  We will first show this for $Y$, since it is simpler, and then for $X$.

  Let $f, g \in \PP$, $y \in Y$, and suppose $f \cdot g \terminates$.
  Note that for all $k \in \omega$,
  \[
    \lim_{n \to \infty} \try((Eg \upharpoonright n)_k) = (Eg)(k),
  \]
  since eventually $n > k$.

  We consider two cases for $y$.
  If $y = \tuple{}$, we have
  \[
    \lim_{n \to \infty} (Ef)(\tuple{} \cons (Eg \upharpoonright n)) = \lim_{n \to \infty} \try((Eg \upharpoonright n)_{\tuple{}}) = (Eg)(\tuple{}) = 0 = E(f \cdot g)(\tuple{}).
  \]

  If $y = y' \cons \alpha$, we have
  \begin{align*}
    \lim_{n \to \infty} (Ef)((y' \cons \alpha) \cons (Eg \upharpoonright n))
      &= \lim_{n \to \infty} \try((Eg \upharpoonright n)_{y' \cons \alpha})\\
      &= (Eg)(y' \cons \alpha) = \try(\alpha_{y'}) = E(f \cdot g)(y' \cons \alpha).
  \end{align*}

  It follows that for all $y \in Y$,
  \[
    \lim_{n \to \infty} (Ef)(y \cons (Eg \upharpoonright n)) = E(f \cdot g)(y),
  \]
  as desired.

  Now let us consider the case for $X$, which is more complicated.
  Let $f, g \in \PP$, $\dot x \cons^* \xi \in X$, and assume $f \cdot g \terminates$.

  We wish to show
  \begin{align*}
    (Ef \cdot Eg)(\dot x \cons^* \xi)
    &= \lim_{n \to \infty} (Ef)((\dot x \cons^* \xi) \cons (Eg \upharpoonright n))\\
    &= \lim_{n \to \infty} (Ef)(\dot x \cons^* (\xi \snoc (Eg \upharpoonright n)))\\
    &= \lim_{n \to \infty} \Phi^f_{L(\xi \snoc (Eg \upharpoonright n))}(x)\\
    &\stackrel{!}{=} \Phi^{f \cdot g}_{L(\xi)}(x)\\
    &= E(f \cdot g)(\dot x \cons^* \xi),
  \end{align*}
  where all except the marked equality are easily verified.
  We will prove the marked equality by induction on the length of $\xi$.

  For the base case, note that
  \[
    \lim_{n \to \infty} \Phi^f_{L(\tuple{} \snoc (Eg \upharpoonright n))}(x) = \lim_{n \to \infty} \Phi^f_{L(\tuple{Eg \upharpoonright n})}(x) = \lim_{n \to \infty} \Phi^f_{r_{0, Eg \upharpoonright n}}(x)
  \]
  and
  \[
    \Phi^{f \cdot g}_{L(\tuple{})}(x) = \Phi^{f \cdot g}_{e_\ast}(x) = (f \cdot g)(x) = \Phi^{f \oplus g}_r(x).
  \]

  We will refer to $f$ as the left oracle and $g$ or $g \upharpoonright n$ as the right oracle,
  and refer to $\Phi^f_{r_{0, Eg \upharpoonright n}}(x)$ and $\Phi^{f \oplus g}_r(x)$ as the
  former and latter computation respectively.
  The former and latter computations will behave identically
  except if the former performs an unsuccessful oracle query
  (which is necessarily to the right oracle, since the left oracle is total) or runs out of steps,
  in which case it will terminate with result 0.
  If this does not occur and both computations terminate, they will, for some $k$,
  terminate with result $k+1$ and $k$ respectively, and hence satisfy the conditions for
  the latter to be the limit of the former, provided the computations do indeed terminate.
  This is the case, since the latter computation only performs finitely many queries to the oracle,
  and so for a sufficiently large $n$ the former computation will have no unsuccessful oracle queries,
  and the two will both run to completion.

  For the inductive step, suppose that for $\xi \in \omega$ we have already shown
  \[
    \lim_{n \to \infty} \Phi^f_{L(\xi \snoc (Eg \upharpoonright n))} = \Phi^{f \cdot g}_{L(\xi)}.
  \]

  We now wish to show that for all $\alpha \in \omega$,
  \[
    \lim_{n \to \infty} \Phi^f_{L(\alpha \cons \xi \snoc (Eg \upharpoonright n))}
      = \lim_{n \to \infty} \Phi^{\Phi^f_{L(\xi \snoc (Eg \upharpoonright n))}}_{r_{|\xi|+1, \alpha}}
      \stackrel{!}{=} \Phi^{\Phi^{f \cdot g}_{L(\xi)}}_{r_{|\xi|, \alpha}}
      = \Phi^{f \cdot g}_{L(\alpha \cons \xi)},
  \]
  where again only the innermost equality is of interest; the rest follow by expanding the definitions.

  For convenience, we will write
  \[
    h_n = \Phi^f_{L(\xi \snoc (Eg \upharpoonright n))} \text{ and } H = \Phi^{f \cdot g}_{L(\xi)}.
  \]

  Our induction hypothesis is thus $\lim_{n \to \infty} h_n = H$, and our goal, letting $m = |\xi|$, is
  to show that for all $x, \alpha \in \omega$,
  \[
    \lim_{n \to \infty} \Phi^{h_n}_{r_{m + 1, \alpha}}(x) = \Phi^H_{r_{m, \alpha}}(x).
  \]

  As in the base case, for any $x$, the two computations behave identically
  except for oracle queries and step bounds, and except that if both run to completion,
  the left will return a value one greater than the right, which
  is exactly what we want for the limit.
  We again refer to $h_n$ or $H$ as the left oracle and $\alpha$ as the right oracle,
  and to $\Phi^{h_n}_{r_{m + 1, \alpha}}(x)$ and $\Phi^H_{r_{m, \alpha}}(x)$ as the former and latter
  computations respectively.

  Let us consider the possible oracle queries:
  \begin{itemize}
    \item If the computations query the right oracle, it is either out of bounds,
      in which case both computations will terminate and will return $m+1$ and $m$ respectively,
      or it is in bounds, in which case both will see the same oracle response.
      The same occurs if the computations hit the step bounds.
    \item If the computations query the left oracle, and the former computation receives 0, then it will
      immediately terminate with result 0.
    \item If the computations query the left oracle, and the former computation receives $k + 1 < m + 1$, then the
      latter computation will receive $k < m$, and both will terminate, with result $k+1$ and $k$ respectively.
    \item If the computations query the left oracle, and the former computation receives $k + 1 \ge m + 1$, then the
      latter computation will receive $k \ge m$, and both use $k + 1 - m + 1 = k - m$ as the oracle response.
  \end{itemize}
  We thus see that unless the former computation terminates with result 0 (which does not affect the limit behavior),
  the two computations will indeed return corresponding results, with the result of the former computation one greater,
  as required by the definition of a limit.
  Again, it remains to show that the former computation will eventually give a non-zero response, and this
  is indeed the case: namely, as before, the latter computation will query the left oracle only finitely often,
  and there exists some $n \in \omega$ such that $h_n$ gives a non-zero result for those finitely many queries.

  It follows that the limit indeed holds, which concludes the inductive case.
  We thus have that $(Ef \cdot Eg)(\dot x \cons^* \xi) = E(f \cdot g)(\dot x \cons^* \xi)$, as desired,
  and hence $E$ is indeed an embedding.
\end{proof}

\begin{theorem}
  There exists an embedding $E : \PP_\bot \to \B$, which restricts to an embedding $\PP_\bot^X \to \B^X$
  for every oracle $X$.
\end{theorem}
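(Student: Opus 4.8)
The plan is to mirror the construction in the proof of Theorem~\ref{thm:p_embed_k2}, replacing the limit-along-initial-segments mechanism of $\K_2$ by the dialogue mechanism of $\B$. As there, $\omega$ is split into coding places and non-coding places, with a distinguished family of ``dotted'' places $\dot x$ at which $Ef$ records the value $f(x)$, so that $f$ is recoverable from $Ef$ (injectivity) and so that, after $\xi$-many applications have been performed, the place $\dot x \cons^* \xi$ carries the value those applications should produce. At the coding places, $Ef$ is the dialogue behaviour of a machine that, using the canonical code $r$ of~\eqref{eqn:def_r} and the composition operator $\mathbin{\ast}$ of~\eqref{eqn:ast}, simulates the $\PP_\bot$-application $\Phi^{f\oplus g}_r$: queries to the left ($f$-)oracle are answered internally, since $Ef$ is $f$-computable; each query $g(l)$ to the right ($g$-)oracle is handled by emitting $\ask(\dot l)$ and reading the reply $Eg(\dot l) = g(l)$ off the dialogue; and when the simulation halts with value $k$, the machine emits $\say(k)$. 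At the non-coding places $Ef$ is given a fixed, $f$-independent value or is left undefined, exactly as for $\K_2$. Since $Ef$ is manifestly $f$-computable, once $E$ is shown to be an embedding it restricts to an embedding $\PP_\bot^X \to \B^X$ for every oracle $X$.

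The first task is to set up the coding apparatus: the coding and non-coding places, the dotted places, the operation $\cons^*$ recording a run of successive applications, and --- by recursion along $\mathbin{\ast}$, as in the definition of $L(\xi)$ --- the codes used at a coding place after $\xi$-many applications. A pleasant simplification over the $\K_2$ case is that the dialogue protocol is adaptive: a machine asks precisely for the right-oracle value it needs, so there is never a ``not enough information yet'' situation, and accordingly there appears to be no need for the family $r_{n,\alpha}$ with its explicit failure responses $0,\dots,n-1$, nor for the step bounds. Correspondingly, a $\K_2$ computation that ran out of steps and returned the filler value $0$ becomes here simply a place at which $Ef$ is undefined, and a divergent simulation of $\Phi^{f\oplus g}_r$ --- because $g$ is undefined at a queried point, or because an inner level of the tower of compositions diverges --- corresponds exactly to a stuck or infinite dialogue, so that $Ef\cdot Eg$ and $E(f\cdot g)$ are both undefined there.

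With this in place, one verifies injectivity (reading $f(x) = Ef(\dot x)$ off the dotted places) and then the embedding property $Ef\cdot Eg = E(f\cdot g)$ place by place. The non-coding places are handled directly, as in the part of the proof of Theorem~\ref{thm:p_embed_k2} dealing with the non-coding places of $\K_2$; for the coding places one argues by induction on the number of applications already performed (the length of $\xi$), showing that the dialogue computed by $Ef\cdot Eg$ at $\dot x\cons^*\xi$ agrees with the result of running the $\mathbin{\ast}$-composed code $L(\xi)$ against $f\cdot g$. I expect this inductive step to be the main obstacle: it is the dialogue analogue of the marked equality $\lim_{n\to\infty}\Phi^f_{L(\xi\snoc(Eg\upharpoonright n))} = \Phi^{f\cdot g}_{L(\xi)}$, and it requires tracking, through a tower of $\mathbin{\ast}$-compositions, which oracle queries are routed to $f$ and which are routed --- via the dialogue with $Eg$ --- to $g$, checking that the dialogue faithfully transmits every $g$-value and never answers a query it should not, and confirming that undefinedness propagates in the same way on both sides. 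The one genuinely new point of care, relative to the $\K_2$ argument, is ensuring that the dialogue prefixes at coding places of the form $\dot x\cons^*\xi$ are not confused with coding or non-coding places of a different shape; with the coding set chosen as in the proof of Theorem~\ref{thm:p_embed_k2} this is routine.
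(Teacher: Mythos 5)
Your overall architecture matches the paper's: coding places $\dot x \cons^* \xi$ carrying the simulated application, non-coding places handled uniformly, injectivity read off the dotted places $\dot x$, and an induction on $|\xi|$ for the marked equality. You are also right that the graded failure values $0,\dots,n-1$ and the step bounds of the $\K_2$ construction are not needed here. But your central claim --- that the adaptive dialogue protocol makes any analogue of the family $r_{n,\alpha}$ unnecessary and that ``there is never a `not enough information yet' situation'' --- is where the proposal goes wrong, and it skips precisely the difficulty the paper's proof is built around. In the $\K_2$ case the approximations $Eg \upharpoonright n$ are uniform: one initial segment serves every place and every sub-computation. In the dialogue case the responses recorded in each component $\xi_i$ are listed in the order in which they were requested, and a single level of the tower of $\ast$-compositions is re-entered many times, once for each left-oracle query made by the level above it; each re-entry consumes a chunk of that component's responses whose length is not known in advance, and the next re-entry must resume where the previous one stopped. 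So the codes playing the role of $L(\xi)$ cannot merely ``route'' queries; they must thread state: every sub-call has to return, along with its value, the leftover portion of the recorded dialogues, and the caller must update its own copy before the next sub-call. This is exactly what the paper's codes $r_n$ do, returning results of the form $\say^{n+1}((k \cons \beta) \cons^* \xi')$ with the consumed responses stripped out.

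A second missing ingredient is forced by the first: an inner level that exhausts its recorded responses must propagate a request $\ask(\dot l)$ through all the outer levels to the top of the current dialogue, and every outer level must be able to distinguish such a propagated ask from a genuine ``completed with value $k$ and leftover state'' return. The $\say^n$ tagging in the paper's construction exists for exactly this purpose, and it is also what makes the final top-level value a legal dialogue move agreeing, up to the projections $p_{n,m}$, with the value of $E(f \cdot g)$ at the same place. Your suggestion that a computation lacking information simply ``becomes a place at which $Ef$ is undefined'' is not available: leaving $Ef$ undefined there stalls the dialogue and destroys the limit on one side only. Without the threading and the tagging the inductive step you defer to does not close; with them it does, but then you have reconstructed the paper's construction rather than simplified it.
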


\begin{proof}
  The structure of this proof is similar to that of Theorem~\ref{thm:p_embed_k2},
  but the details of the encoding differ considerably.
  Once again, we will first construct $E : \PP_\bot \to \B$ and then show that it is an embedding.
  Since application in $\PP_\bot$ and $\B$ is total, it suffices to show that for all $\vph, \psi \in \PP_\bot$,
  $E\vph \cdot E\psi = E(\vph \cdot \psi)$.
  
  Given $x \in \omega$, let $\dot x = \tuple{\tuple{}, 0, x}$.
  We split $\omega$ into $X$ and $Y$ as follows:
  \begin{align*}
    X &= \compr{\dot{x} \cons^* \xi}{x, \xi \in \omega}\\
    Y &= \omega - X,
  \end{align*}

  In the proof of Theorem~\ref{thm:p_embed_k2}, we used the fact that if a sequence
  of functions $f_n$ had a limit $g$, then for all $i$ and $x$, $f_i(x)$ was an
  approximation of $g(x)$.
  This is not the case here: given a family of functions $\delta_\alpha$ with
  limit $\Delta$, for every $x$ we will in general need a different $\beta$ for
  $\delta_\beta(x)$ to be an approximation of $\Delta(x)$, since $\beta$ should
  contain responses to the queries made by $\delta_{\beta'}(x)$ for $\beta' \sqsubset \beta$.
  To work around this problem, we thread the responses provided to us
  in $\xi$ through our computation, allowing subcomputations to consume some of the responses
  and then return the remaining ones.

  Given $n \in \omega$, let $p_{n, m}$ be the function that maps $\say^n(k \cons^* \xi)$ to $\say^n(k)$
  when $|\xi| = m$, and that is the identity everywhere else.
  Let $r_n$ be defined inductively as follows.
  Let $r_0 = e_\ast$, the identity of $\ast$ as defined in (\ref{eqn:ast}).
  Let $r_{n+1}$ be the code of the function that, on input $(x \cons \beta) \cons^* \xi$ where $|\xi| = n$,
  simulates $r$ on input $x$, but modifies the oracle queries as follows.
  On an oracle query of the form $2l$, run $r_n$ on input $l \cons^* \xi$.
  If the result is of the form $\say^n(k \cons^* \xi')$, update $\xi$ to be $\xi'$ and use $k$
  as the result of the query.
  If the result is of any other form $k$, terminate the computation and return $k$.
  On an oracle query of the form $2l+1$, use the first value in $\beta$, removing it from $\beta$.
  If no value is available, return $\say^n(\ask(\dot{l}))$.
  If $r$ runs to completion with result $k$, return $\say^{n+1}((k \cons \beta) \cons^* \xi)$,
  where $\beta$ and $\xi$ have possibly been updated in the process of simulating $r$.

  We define $E$ as follows:
  \begin{align*}
    (E\vph)(\dot x \cons^* \xi) &= p_{|\xi|, |\xi|}\left(\Phi^{\vph}_{r_{|\xi|}}(x \cons^* \xi)\right) & \text{for $x, \xi \in \omega$}\\ 
    (E\vph)(\tuple{y}) &= \ask(y) & \text{for $y \in Y$}\\
    (E\vph)(\tuple{y, z}) &= \say(z) & \text{for $y \in Y, z \in \omega$}\\
    (E\vph)(y) &= 0 & \text{otherwise.}
  \end{align*}

  Once again, we see that $E\vph$ is partial $\vph$-computable and that $E$ is injective, as desired.
  As before, $(E\vph)(\dot x) = \Phi^\vph_{e_\ast}(x) = \vph(x)$ whenever the right-hand side is defined.

  To show that $E$ is an embedding, it remains to verify that it preserves application.
  Let $\vph, \psi \in \PP_\bot$.

  For $y \in Y$, we see that
  \[
    (E\vph \cdot E\psi)(y) = \lim_{\alpha \to E\psi} (E\vph)(y \cons \alpha) = (E\psi)(y),
  \]
  where the first equality is by definition and the second is witnessed by the dialogue
  $\tuple{y, (E\psi)(y)}$.
  Since $y \in Y$, we have $(E\psi)(y) = E(\vph \cdot \psi)(y)$ by our definition of $E$,
  as desired.
  Note that we have no problems with non-termination here; all terms are defined.

  Let us now look at $X$.
  Our aim is to show that for all $x, \xi \in \omega$, letting $n = |\xi|$, the following sequence of congruences holds:
  \begin{align*}
    (E\vph \cdot E\psi)(\dot x \cons^* \xi)
      &\simeq \lim_{\alpha \to E\psi} p_{n+1, n+1}\left(\Phi^\vph_{r_{n+1}}((x \cons^* \xi) \cons \alpha)\right)\\
      &\simeq p_{n, n+1}\left(\lim_{\alpha \to E\psi} \Phi^\vph_{r_{n+1}}((x \cons^* \xi) \cons \alpha)\right)\\
      &\stackrel{!}{\simeq} p_{n, n}\left(\Phi^{\vph \cdot \psi}_{r_n}(x \cons^* \xi)\right)\\
      &\simeq E(\vph \cdot \psi)(\dot x \cons^* \xi).
  \end{align*}

  Only the marked equivalence is of interest, as the first and last hold by definition, and the second is easily checked.
  We show that this equivalence holds by an induction on the length of $\xi$.
  Again letting $n = |\xi|$, we show:
  \begin{itemize}
    \item If $\Phi^{\vph \cdot \psi}_{r_n}(x \cons^* \xi)$ terminates and is of the form $\say^n(k \cons^* \xi')$ then
      \[ \lim_{\alpha \to E\psi} \Phi^\vph_{r_{n+1}}((x \cons^* \xi) \cons \alpha) = \say^n((k \cons^* \xi') \cons \tuple{}), \]
      and if $\alpha$ is a supersequence of the sequence witnessing the limit, then
      \begin{equation}
        \label{eqn:ind_2}
        \Phi^\vph_{r_{n+1}}((x \cons^* \xi) \cons \alpha) = \say^{n+1}((k \cons^* \xi') \cons \alpha'),
      \end{equation}
      where $\alpha'$ is the sequence of oracle responses not consumed by the computation.
    \item If $\Phi^{\vph \cdot \psi}_{r_n}(x \cons^* \xi)$ terminates with any result $k$ not of the above form,
      then
      \[ \lim_{\alpha \to E\psi} \Phi^\vph_{r_{n+1}}((x \cons^* \xi) \cons \alpha) = k. \]
    \item If $\Phi^{\vph \cdot \psi}_{r_n}(x \cons^* \xi)$ does not terminate, then
      \[ \lim_{\alpha \to E\psi} \Phi^\vph_{r_{n+1}}((x \cons^* \xi) \cons \alpha) \]
      does not exist.
  \end{itemize}

  To see this suffices, note that if $\Phi^{\vph \cdot \psi}_{r_n}(x \cons^* \xi)$ terminates then, whatever the result, we have
  \[
      p_{n, n+1}\left(\lim_{\alpha \to E\psi} \Phi^\vph_{r_{n+1}}((x \cons^* \xi) \cons \alpha)\right)
      = p_{n, n}\left(\Phi^{\vph \cdot \psi}_{r_n}(x \cons^* \xi)\right).
  \] 

  Conversely, if $\Phi^{\vph \cdot \psi}_{r_n}(x \cons^* \xi)$ diverges then both sides diverge, as desired.

  Let us now proceed with the inductive proof.
  For the base case, note that
  \[ \Phi^{\vph \cdot \psi}_{r_0}(x) = \Phi^{\vph \cdot \psi}_{e_\ast}(x) = (\vph \cdot \psi)(x) = \Phi^{\vph \oplus \psi}_r(x) \]
  and consider the computation $\Phi^\vph_{r_1}(x \cons \alpha)$ in the context of the limit
  \begin{equation}
    \label{eq:B_base_lim} \lim_{\alpha \to E\psi} \Phi^\vph_{r_1}(x \cons \alpha).
  \end{equation}

  This computation simulates $r$, and this simulation can do the following:
  \begin{itemize}
    \item Query the oracle at $2l$: it will receive $\vph(l)$ as response, since $r_0 = e_\ast$.
      Note that the response is always of the form $k = \say^0(k \cons^* \tuple{})$.
    \item Query the oracle at $2l + 1$, with no more elements of $\alpha$ available: the computation terminates with response $\ask(\dot l)$,
      ensuring $\alpha$ will be extended with $(E\psi)(\dot l) = \psi(l)$.
    \item Query the oracle at $2l + 1$, with elements of $\alpha$ remaining: the next element of $\alpha$ is $\psi(l)$, as
      ensured by the previous point.
    \item Terminate with result $k$: the computation as a whole terminates with $\say(k \cons \alpha)$,
      where $\alpha$ may have been updated in the process of the computation.
  \end{itemize}
  We thus see that if $\alpha$ is sufficiently long and $\Phi^{\vph \oplus \psi}_r(x)$ terminates, then
  \[
    \Phi^\vph_{r_1}(x \cons \alpha) = \say\left(\Phi^{\vph \oplus \psi}_r(x) \cons \alpha'\right),
  \]
  where $\alpha'$ is the sequence of left-over elements of $\alpha$ after all oracle queries have been handled by the simulation.
  It follows that
  \[
    \lim_{\alpha \to E\psi} \Phi^\vph_{r_{n+1}}(x \cons \alpha) = \Phi^{\vph \oplus \psi}_r(x) \cons \tuple{}
  \]
  and (\ref{eqn:ind_2}) holds, as required.
  If $\Phi^{\vph \oplus \psi}_r(x)$ diverges, then the limit does not exist, satisfying the last condition of the induction.

  This concludes the base case.
  The argument for the inductive case is analogous, with the computations of interest being
  \[
    \Phi^\vph_{r_{n+2}}(((x \cons \beta) \cons^* \xi) \cons \alpha) \text{ and } \Phi^{\vph \cdot \psi}_{r_{n+1}}(((x \cons \beta) \cons^* \xi),
  \] 
  which we will refer to as the former and latter computation respectively.

  Both of these computations simulate $r$, and differ only in their behavior with respect to oracle queries,
  as well as the result upon termination.
  While there are many cases, the argument in each case is straightforward.
  The possible things the simulations can do are:
  \begin{itemize}
    \item Query the oracle at $2l$, with results of the form $\say^{n+1}((k \cons^* \xi) \cons \alpha)$
      and $\say^n(k \cons^* \xi)$ respectively: both simulations proceed with result $k$
    \item Query the oracle at $2l$, with results of the form $\say(k)$ and $k$ respectively:
      both computations terminate, with result $\say(k)$ and $k$ respectively.
    \item Query the oracle at $2l$, with a result of the form $\ask(k)$ for the former computation:
      the former computation terminates with result $\ask(k)$, ensuring $\alpha$ is extended.
    \item Query the oracle at $2l+1$, and $\beta$ has elements remaining:
      both simulations use the next element of $\beta$ as the value.
    \item Query the oracle at $2l+1$, and $\beta$ is empty:
      the computations terminate with results $\say^{n+1}(\ask(\dot{l}))$ and $\say^n(\ask(\dot{l}))$ respectively.
    \item Both simulations terminate with result $k$:
      the computations terminate with result $\say^{n+2}(((k \cons \beta) \cons^* \xi) \cons \alpha)$ and
      $\say^{n+1}((k \cons \beta) \cons^* \xi)$ respectively.
  \end{itemize}

  We see that the simulations will proceed in lockstep, and that the relation between the
  final results will satisfy the definition of a limit, as desired.
  The conditions of the induction can be checked the same way as in the base case.

  Putting it all together, we see that for all $x \in \omega$, $(E\vph \cdot E\psi)(x) = E(\vph \cdot \psi)(x)$, as desired.
  It follows that $E$ is indeed an embedding, concluding the proof.
\end{proof}


\begin{thebibliography}{99}

\bibitem{AspertiCiabattoni} A. Asperti and A. Ciabattoni,
\textit{A sufficient condition for completability of partial combinatory algebras},
Journal of Symbolic Logic 64(4) (1997) 1209--1214.

\bibitem{Barendregt} H. P. Barendregt,
\textit{The lambda calculus}, 
Studies in Logic and the Foundations of Mathematics Vol.\ 103, 
North-Holland, Amsterdam, 1984 (2nd edition).

\bibitem{Beeson} M. J. Beeson, 
\textit{Foundations of constructive mathematics},
Springer-Verlag, 1985.

\bibitem{Bethke} I. Bethke,
\textit{Notes on Partial Combinatory Algebras},
PhD thesis, Universiteit van Amsterdam, 1988.

\bibitem{BethkeKlopVrijer1995}
I. Bethke, J. W. Klop, and R. de Vrijer,
\textit{Completing partial combinatory algebras with unique head-normal forms},
technical report CS-R9544, 
Centrum Wiskunde \& Informatica (CWI), 1995.

\bibitem{Ershov} Y. L. Ershov,
\textit{Theorie der {N}umerierungen I}, {Z}eitschrift f\"ur mathematische
{L}ogik und {G}rundlagen der {M}athematik 19 (1973) 289--388.

\bibitem{Feferman} S. Feferman,
\textit{A language and axioms for explicit mathematics},
in: J. N. Crossley (ed.), Algebra and Logic, Springer, 1975, 87--139.

\bibitem{Friedman} H. Friedman,
\textit{Axiomatic recursive function theory},  
in: Gandy and Yates \cite{GandyYates}, 113--137.

\bibitem{FrittaionRathjen} 
E. Frittaion and M. Rathjen, 
\textit{Extensional realizability for intuitionistic set theory},  
Journal of Logic and Computation 31(2) (2021) 630--653.

\bibitem{GandyYates} R. O. Gandy and C. E. M. Yates (eds.), 
\textit{Logic Colloquium '69},  
Proceedings of the Summer School and Colloquium in Mathematical Logic, 
Manchester, 1969, 
Amsterdam: North-Holland, 1971.

\bibitem{KleeneVesley} S. C. Kleene and R. E. Vesley,
\textit{The foundations of intuitionistic mathematics},
North-Holland, 1965.

\bibitem{LongleyNormann} J. Longley and D. Normann, 
\textit{Higher-order computability}, 
Springer, 2015.

\bibitem{Longley} J. Longley
\textit{Realizability Toposes and Language Semantics}, PhD thesis, University
of Edinburgh, 1994.

\bibitem{McCarty} D. C. McCarty, 
\textit{Realizability and recursive set theory}, 
Annals of Pure and Applied Logic 32(2) (1986) 153--183.

\bibitem{Moschovakis} Y. N. Moschovakis, 
\textit{Axioms for computation theories---first draft}, 
in: Gandy and Yates \cite{GandyYates}, 199--255.

\bibitem{Odifreddi} P. Odifreddi, \textit{Classical recursion theory}, 
Vol. 1, Studies in logic and the foundations of mathematics Vol. 125,
North-Holland, Amsterdam, 1989.

\bibitem{OdifreddiII} P. G. Odifreddi, \textit{Classical recursion theory},
Vol.\ 2, Studies in logic and the foundations of mathematics Vol.\ 143,
North-Holland, Amsterdam, 1999.

\bibitem{vanOosten} J. van Oosten, 
\textit{Realizability: An introduction to its categorical side}, 
Studies in logic and the foundations of mathematics Vol.\ 152,
Elsevier, 2008.

\bibitem{vanOostenSeqComp} J. van Oosten,
\textit{A combinatory algebra for sequential functionals of finite type},
S. B. Cooper and J. K. Truss (eds.), Models and Computability,
Cambridge University Press (1999) 389--406.

\bibitem{Rathjen} M. Rathjen, 
\textit{Realizability for constructive Zermelo-Fraenkel set theory}, 
in: J. V\"a\"an\"anen, V. Stoltenberg-Hansen (eds.), 
Proceedings of the Logic Colloquium 2003,
Lecture Notes In Logic 141 (2006) 442--471.

\bibitem{Scott} D. Scott, 
\textit{Lambda calculus and recursion theory (preliminary version)}, 
in: S. Kanger (ed.), 
Proceedings of the Third Scandinavian Logic Symposium, 
Studies in Logic and the Foundations of Mathematics
Vol.\ 82 (1975) 154--193.

\bibitem{ShaferTerwijn} P. Shafer and S. A. Terwijn, 
\textit{Ordinal analysis of partial combinatory algebras}, 
Journal of Symbolic Logic 86(3) (2021) 1154--1188.

\bibitem{Soare} R. I. Soare, 
\textit{Recursively enumerable sets and degrees}, 
Springer-Verlag, 1987.

\bibitem{Strong} H. R. Strong, 
\textit{Algebraically generalized recursive function theory},  
IBM Journal of Research and Development 12 (1968) 465--475.

\bibitem{TroelstravanDalenII} A. S. Troelstra and D. van Dalen,
{\em Constructivism in Mathematics}, Vol.~2,
Studies in logic and the foundations of mathematics Vol.\ 123, North-Holland, 1988.

\bibitem{Wagner} E. G. Wagner, 
\textit{Uniform reflexive structures: on the nature of 
{G}\"odelizations and relative computability},
Transactions of the AMS 144 (1969) 1--41.

\end{thebibliography}
\end{document}